\def\reflb#1#2{\begingroup
    #2%
    \def\@currentlabel{#2}%
    \phantomsection\label{#1}\endgroup
}
\definecolor{darkred}{rgb}{1,0,0} 
\definecolor{darkgreen}{rgb}{0,0.8,0}
\definecolor{darkblue}{rgb}{0,0,1}
\newtheorem{thm}{Theorem}
\numberwithin{thm}{section}
\numberwithin{equation}{section}
\newtheorem{theorem}[thm]{Theorem}
\newtheorem*{theorem*}{Theorem}
\newtheorem{corollary}[thm]{Corollary}
\newtheorem*{corollary*}{Corollary}
\newtheorem{lemma}[thm]{Lemma}
\newtheorem{proposition}[thm]{Proposition}
\newtheorem*{conjecture*}{Conjecture}
\newtheorem*{question*}{Question}
\newtheorem*{definition*}{Definition}
\newtheorem*{definitions*}{Definitions}
\newtheorem*{rem*}{Remark}
\theoremstyle{remark}
\newtheorem{remark}[thm]{Remark}
\newtheorem*{remark*}{Remark}
\newtheorem*{remarks*}{Remarks}
\newtheorem*{example*}{Example}
\newtheorem*{examples*}{Examples}
\newcommand{\R}{\mathbb{R}}
\newcommand{\Z}{\mathbb{Z}}
\newcommand{\Q}{\mathbb{Q}}
\newcommand{\C}{\mathbb{C}}
\newcommand{\N}{\mathbb{N}}
\newcommand{\T}{\mathbb{T}}
\def\CP{{\mathbb C}P}
\def\RP{{\mathbb R}P}
\def\HP{{\mathbb H}P}
\def\CaP{{\text Ca}P}
\newcommand{\ep}{\epsilon}
\newcommand{\ga}{\gamma}
\newcommand{\om}{\omega}
\newcommand{\Om}{\Omega}
\def\maslov{{\mu_\text{Maslov}}}
\def\cz{{\mu}}
\def\mi{{\text{min}}}
\def\lo{{\ell_0}}
\def\l{{\ell}}
\def\mi{{\hat\mu}}
\newcommand{\hmu}{\hat{\mu}}
\def\vk{\vec{k}}
\def\HC{{\mathrm{HC}}}
\def\SH{{\mathrm{SH}}}
\def\CC{{\mathrm{CC}}}
\def\HF{{\mathrm{HF}}}
\newcommand{\const}{{\mathit const}}
\newcommand{\Sp}{\mathrm{Sp}}
\newcommand{\Gr}{\mathrm{Gr}}
\newcommand    \TSp     {\widetilde{\mathrm{Sp}}}
\newcommand   \TSpn    {\widetilde{\mathrm{Sp}}{}^*}
\newcommand{\sign}{\operatorname{sign}}
\newcommand{\sgn}{\operatorname{sgn}}
\newcommand{\p}{\partial}
\newcommand\ff{{\mathfrak f}}
\newcommand\AC{{\mathfrak A}}
\newcommand{\eps}{\epsilon}
\begin{document}

\title[Multiplicity of closed Reeb orbits on
prequantizations]{Multiplicity of closed Reeb orbits on
  prequantization bundles}

\author[Viktor Ginzburg]{Viktor L. Ginzburg}
\author[Ba\c sak G\"urel]{Ba\c sak Z. G\"urel}
\author[Leonardo Macarini]{Leonardo Macarini}

\address{Department of Mathematics, UC Santa Cruz, Santa Cruz, CA
  95064, USA} \email{ginzburg@ucsc.edu} 

\address{Department of Mathematics, University of Central Florida,
  Orlando, FL 32816, USA} \email{basak.gurel@ucf.edu}

\address{Universidade Federal do Rio de Janeiro, Instituto de
  Matem\'atica, Cidade Universit\'aria, CEP 21941-909, Rio de Janeiro,
  Brazil} \email{leomacarini@gmail.com}

\subjclass[2010]{53D40, 53D25, 37J10, 37J55} \keywords{Closed orbits,
  Conley-Zehnder index, Reeb flows, equivariant symplectic homology}

\date{\today} 

\thanks{The work is partially supported by NSF grant DMS-1308501 (VG),
  NSF CAREER grant DMS-1454342 (BG) and CNPq, Brazil (LM)}

\bigskip

\begin{abstract} We establish multiplicity results for geometrically
  distinct contractible closed Reeb orbits of non-degenerate contact
  forms on a broad class of prequantization bundles.  The results hold
  under certain index requirements on the contact form and are sharp
  for unit cotangent bundles of CROSS's. In particular, we generalize
  and put in the symplectic-topological context a theorem of Duan,
  Liu, Long, and Wang for the standard contact sphere. We also prove
  similar results for non-hyperbolic contractible closed orbits and
  briefly touch upon the multiplicity problem for degenerate forms. On
  the combinatorial side of the question, we revisit and reprove the
  enhanced common jump theorem of Duan, Long and Wang, and interpret
  it as an index recurrence result.

\end{abstract}

\maketitle

\tableofcontents

\section{Introduction}
\label{sec:intro}
The main theme of this paper is the multiplicity problem for
geometrically distinct contractible closed Reeb orbits of
non-degenerate contact forms satisfying certain index conditions on a
broad class of prequantization bundles. The multiplicity results
established here apply to the unit cotangent bundles of CROSS's
(compact rank one symmetric spaces) for which they are sharp and to
some other prequantization bundles. In particular, we generalize and
put in the symplectic-topological context the main theorem from
\cite{DLLW} on the multiplicity of simple closed Reeb orbits on the
standard contact $S^{2n+1}$. On the combinatorial side of the
question, we revisit and reprove the enhanced common jump theorem from
\cite{DLW2} and interpret it as an index recurrence result along the
lines of the index analysis from~\cite{GG:convex}.

The multiplicity problem for geometrically distinct closed Reeb orbits
originated in Hamiltonian dynamics, going back at least a hundred
years. In its modern form, the question is about establishing a lower
bound, ideally sharp, for the number of such orbits of a contact form
$\alpha$ on a given contact manifold $(M^{2n+1},\xi)$. The form
$\alpha$ is usually required to meet some additional conditions
playing both conceptual and technical roles. Here, for instance, we
mainly focus on non-degenerate contact forms. Then a suitable homology
theory associated with an action functional is utilized to detect
closed Reeb orbits. In our case, this is the equivariant symplectic
homology, i.e., essentially Floer theory.

The fundamental difficulty in the multiplicity problem, at least in
dimensions $2n+1>3$, lies not in the choice of homology theory but in
distinguishing simple orbits from iterated ones. This difficulty
already manifests itself in the classical problem of the existence of
infinitely many simple closed geodesics for a Riemannian metric on
$S^n$, which is wide open for $n>2$. To get around this problem, one
invariably has to impose restrictions on the index or action of closed
Reeb orbits.
 
To illustrate the state of the art of multiplicity results for
$2n+1>3$, let us consider the simplest example of the standard contact
structure $\xi$ on $S^{2n+1}$ without trying to give a comprehensive
account even in this case. Hypothetically, every contact form $\alpha$
supporting $\xi$ has at least $n+1$ simple closed Reeb orbits. This
conjecture, however, is very far from being proved when $n\geq
2$. (See \cite{CGH, GHHM, LL} for the proofs when $n=1$.) In general,
without any non-degeneracy or index/action assumptions, it is not even
known if there is more than one simple closed Reeb orbit if $n\geq
2$. When $\alpha$ is non-degenerate, it is easy to see that there must
be at least two such orbits (see, e.g., \cite{Gu, Ka}), but the
existence of three simple orbits on, say, $S^5$ is already a difficult
open question.

The situation changes dramatically once we impose further restrictions
on the indices or actions of closed Reeb orbits. Putting action
requirements aside, although these are also of considerable interest,
we will focus on the index constraints which are more relevant to our
goals here. In a series of papers starting with a groundbreaking work
of Long and Zhu, \cite{Lon02, LZ}, various multiplicity results have
been proved under what is usually referred to as the dynamical
convexity assumption; see \cite{AM:multiplicity, GG:convex, GuKa,
  Wa13, Wa} and references therein.  For $S^{2n+1}$ this is the
requirement that all closed Reeb orbits have Conley--Zehnder index at
least $n+2$ and follows from geometrical convexity; see, e.g.,
\cite{AM:dynconvex, GG:convex, HWZ:convex}.  (When the form is
degenerate, one has to replace the Conley--Zehnder index by its lower
semicontinuous extension.) Then it has been shown that a
non-degenerate dynamically convex contact form on $S^{2n+1}$ must have
at least $n+1$ simple closed Reeb orbits and, without the
non-degeneracy assumption, the number of orbits is at least
$\lceil (n+1)/2\rceil +1$.  Some of these results and methods carry
over to other contact manifolds, e.g., to certain prequantization
bundles, although then the notion of dynamical convexity gets more
involved; cf.\ \cite{AM:multiplicity}.

More recently, in \cite{DLLW}, the existence of $n+1$ simple closed
Reeb orbits for non-degenerate forms on $S^{2n+1}$ was established
under a condition which is less restrictive than dynamical convexity.
This condition is that all closed Reeb orbits have positive mean index
and there are no orbits with Conley--Zehnder index 0 when $n$ is odd
and index $0$ or $\pm 1$ when $n$ is even.

Our main goal in this paper is to extend this result to some other
prequantization bundles including the unit cotangent bundles of
CROSS's. This is done in Theorem \ref{thm:main} and its corollaries;
see Section \ref{sec:results}. In particular, we establish the
existence of at least two geometrically distinct closed geodesics for
a bumpy Finsler metric on a CROSS; cf.\ \cite{DLW1}. We also show that
many of the orbits found in the setting of Theorem \ref{thm:main} are
non-hyperbolic; see Section \ref{sec:non-hyp} and, in particular,
Theorem \ref{thm:non-hyp} and Corollary \ref{cor:sphere&cross-non-hyp}
generalizing the results from \cite{DLW2, DLLW}.  Finally, in Section
\ref{sec:deg}, for the sake of comparison we briefly touch upon the
case where the contact form is degenerate.

The proof of Theorem \ref{thm:main}, similarly to the proof of the
multiplicity theorem in \cite{DLLW}, hinges on a combinatorial result
-- the so-called enhanced common index jump theorem -- enabling one to
distinguish simple closed orbits from iterated ones. In Section
\ref{sec:IRT}, we revisit and reprove this theorem from the
perspective of index recurrence; cf.\ \cite[Sect.\ 5]{GG:convex}.

On the technical side, as has been mentioned above, the proof of our
main theorem relies on the machinery of equivariant symplectic
homology treated in a somewhat unconventional way following
\cite[Sect.\ 3]{GG:convex}; see Section \ref{sec:esh}.  This machinery
necessitates certain fillability requirements or index lower bounds,
which limit the class of prequantization bundles and contact forms in
the main theorem. If the equivariant symplectic homology is replaced
by contact homology, also used in a slightly non-standard form (see
Section \ref{sec:ch}), the main theorem can be further
generalized. This generalization is discussed in Section
\ref{sec:ch-m}; see Theorem~\ref{thm:main-ch}.

Another application of the variant of the contact homology from
Section \ref{sec:ch} is a refinement of the contact Conley conjecture
originally proved in \cite{GGM} and asserting the unconditional
existence of infinitely many simple closed Reeb orbits on some
prequantization bundles, not forced by homological growth. This is
Theorem \ref{thm:ccc} in Section \ref{sec:ccc}. We refer the reader to
\cite{GG:CC} for a detailed survey of the results on the Conley
conjecture.


\vskip .2cm
\noindent \textbf{Acknowledgments.}
The authors are grateful to Fr\'ed\'eric Bourgeois for useful discussions.

\section{Main results}
\label{sec:intro+results}

\subsection{Multiplicity results for closed Reeb orbits}
\label{sec:results}
Let $(M^{2n+1},\xi)$ be a closed contact manifold satisfying
$c_1(\xi)|_{\pi_2(M)}=0$ and let $\alpha$ be contact form supporting
the contact structure $\xi$. We call $\alpha$ \emph{index-positive}
(resp.\ \emph{index-negative}) if the mean index $\mi(\ga)$ is
positive (resp.\ negative) for every contractible periodic orbit $\ga$
of $\alpha$ and \emph{index-definite} when $\alpha$ is index-positive
or index-negative. Note that these requirements are notably weaker
than the standard notions of index positivity/negativity where, in,
say, the positive case, the mean index is required to grow at least
linearly with the action; cf.\ Lemma \ref{lemma:pos_index}. However,
the requirements become equivalent when the Reeb flow has only
finitely many contractible simple closed orbits.  The form $\alpha$ is
said to be \emph{index-admissible} if it has no closed orbits with
index $2-n$ or $2-n \pm 1$ contractible in $M$. Below, as is
customary, a non-degenerate periodic orbit $\ga$ is called \emph{good}
if its Conley-Zehnder index $\cz(\ga)$ has the same parity as that of
the underlying simple closed orbit; see Section \ref{sec:index_orbit}.

Throughout the paper we will focus on contact manifolds
$(M^{2n+1},\xi)$ which are prequantization circle bundles over closed
integral symplectic manifolds $(B^{2n},\omega)$, i.e., the first Chern
class of the principle bundle $M \to B$ is $-[\omega]$. We will
consider such contact manifolds which admit a ``nice'' symplectic
filling and also the non-fillable ones.  Accordingly, we will impose
one of the following two conditions, (F) and (NF), in most of our
results.

\begin{enumerate}

\item[\reflb{cond:F}{(F)}] 
\begin{itemize}
\item[(i)] The manifold $(M^{2n+1},\xi)$ admits a strong symplectic
filling $(W,\Om)$ which is symplectically aspherical, i.e.,
$\Om|_{\pi_2(W)}=0$ and $c_1(TW)|_{\pi_2(W)}=0$, and the map
$\pi_1(M) \to \pi_1(W)$ induced by the inclusion is injective.

\item[(ii)] The contact form $\alpha$ is non-degenerate,
  index-definite and has no contractible good periodic orbits $\ga$
  such that $\cz(\ga)=0$ if $n$ is odd or $\cz(\ga) \in \{0,\pm 1\}$
  if $n$ is even.
\end{itemize}

\item[\reflb{cond:NF}{(NF)}] We have $c_1(\xi)=0$ in $H^2(M;\Z)$ and
  $B$ is spherically positive monotone. Furthermore, the contact form
  $\alpha$ is non-degenerate, index-positive, index-admissible and has
  no contractible good periodic orbits $\ga$ such that $\cz(\ga)=0$ if
  $n$ is odd or $\cz(\ga) \in \{0,\pm 1\}$ if $n$ is even.

\end{enumerate}

Note that in the setting of Part (i) of (F), $B$ is necessarily
spherically monotone. (We show this in the proof of Proposition
\ref{prop:CH}.)  Likewise, the condition that $c_1(\xi)=0$ from (NF)
implies via the Gysin exact sequence that $c_1(TB)=\lambda [\om]$ in
$H^2(B;\Q)$ for some $\lambda\in\R$, i.e., the symplectic manifold
$(B,\omega)$ is positive or negative monotone in a very strong
sense. (Then $\lambda\geq 0$ since $B$ is also spherically positive
monotone.)

A word is also due on the role of the condition that $\alpha$ is
simultaneously index-positive and index-admissible in (NF). This
condition is equivalent to that all contractible periodic orbits have
index greater than $3-n$ whenever the contact form is index-definite
(more precisely, whenever the contact form has no contractible closed
orbits with zero mean index). As a consequence, the positive
equivariant symplectic homology of $M$ is defined and well-defined
without a filling of $(M,\alpha)$ when (NF) holds; see Section
\ref{sec:esh} and \cite[Sect.\ 4.1.2]{BO:12}.

Our main result is Theorem \ref{thm:main} which, under some index
conditions, establishes a sharp lower bound for the number of
contractible closed Reeb orbits on certain prequantization
$S^1$-bundles. In what follows, given a symplectic
manifold $B$, denote by $\chi(B)$ its Euler characteristic and by
$$ 
c_B := \inf \{k \in \Z^+ \mid \exists S \in \pi_2(B) \text{ with }
\langle c_1(TB),S \rangle = k\} 
$$ 
its minimal Chern number.

\begin{theorem}
\label{thm:main}
Let $(M^{2n+1},\xi)$ be a prequantization $S^1$-bundle of a closed
symplectic manifold $(B,\om)$ such that $\om|_{\pi_2(B)}\neq 0$ and
$c_B>n/2$ and, furthermore, $H_{k}(B;\Q)=0$ for every odd $k$ or
$c_B>n$. Let $\alpha$ be a contact form supporting $\xi$ and assume
that $M$ and $\alpha$ satisfy condition \ref{cond:F} or \ref{cond:NF}.
Then $\alpha$ carries at least $r_B$ geometrically distinct
contractible periodic orbits, where
\begin{equation*}
r_B:=
\begin{cases}
\chi(B) + 2\dim H_n(B;\Q)
\text{ if } n\text{ is odd} \\
\chi(B) + 4\dim H_{n-1}(B;\Q)
\text{ if } n\text{ is even.}
\end{cases}
\end{equation*}
\end{theorem}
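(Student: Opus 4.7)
The strategy is to argue by contradiction, in the spirit of \cite{DLLW}, using the positive equivariant symplectic homology of Section \ref{sec:esh} in place of their variational framework and leveraging the index recurrence theorem of Section \ref{sec:IRT}. Suppose $\alpha$ carries only $N < r_B$ geometrically distinct contractible simple closed Reeb orbits $\gamma_1,\ldots,\gamma_N$. Under \ref{cond:F}, one works with $\SH^{S^1,+}_*(W)$ of the given filling; under \ref{cond:NF}, the combined index-positivity and index-admissibility hypothesis ensures that $\SH^{S^1,+}_*(M,\alpha)$ is intrinsically defined (cf.\ \cite[Sect.\ 4.1.2]{BO:12}).

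The first step I would carry out is a rank computation for $\SH^{S^1,+}$ in a suitable window of degrees. Since $(M,\xi)$ is a prequantization bundle over $(B,\omega)$, its standard ``round'' contact form is Morse--Bott and each iterate of the fiber contributes a degree-shifted copy of $H_*(B;\Q)$. The monotonicity assumption $c_B > n/2$ (and $c_B > n$ when $B$ has odd-degree rational homology) ensures that, in a window of length of order $2n$ around a large degree $N_0$, consecutive Morse--Bott strata contribute with opposite parity and essentially cancel except for the ``middle'' stratum. The resulting rank in that window equals precisely $r_B$: the $\chi(B)$ term captures the alternating-sum cancellation, while the additional $2\dim H_n(B;\Q)$ (odd $n$) or $4\dim H_{n-1}(B;\Q)$ (even $n$) term is the contribution of the middle stratum that has no partner to cancel against.

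Next, I would apply the enhanced common index jump theorem of Section \ref{sec:IRT}: for the simple orbits $\gamma_1,\ldots,\gamma_N$ one obtains an iteration vector $(k_1,\ldots,k_N)$ and a large integer $N_0$ such that each $\cz(\gamma_i^{k_i})$ lies in a controlled interval around $N_0$, with precise index formulas for the nearby iterates $\gamma_i^{k_i \pm \ell}$. The index-admissibility part of \ref{cond:F} or \ref{cond:NF} --- exclusion of contractible good orbits with $\cz = 0$ (odd $n$) or $\cz \in \{0, \pm 1\}$ (even $n$) --- then restricts, via the standard iteration inequalities, how many iterates of each $\gamma_i$ can fall in the window around $N_0$ and contribute chain-level generators to $\SH^{S^1,+}$.

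The conclusion should follow from comparing the two counts: the per-orbit bounds, summed over $i = 1,\ldots, N$, yield at most $r_B - 1$ independent chain generators in the window, contradicting the rank $r_B$ computed in the first step. I expect the main obstacle to be the delicate bookkeeping in this last step, namely tallying for each parity class of $\cz(\gamma_i)$ which iterates survive the index-admissibility filter and matching the maximum per-orbit contribution against the odd/even $n$ expressions for $r_B$; the fact that the excluded set is $\{0\}$ in one case and $\{0, \pm 1\}$ in the other is precisely what should align with the $2\dim H_n$ versus $4\dim H_{n-1}$ discrepancy in the definition of $r_B$, and verifying this alignment cleanly is the technical heart of the argument.
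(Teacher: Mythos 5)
Your high-level ingredient list — positive equivariant symplectic homology, the index recurrence theorem, and the role of the index-admissibility condition — matches the paper, but the mechanism you describe is not the one that works, and the central claimed computation is incorrect.

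First, your key numerical claim, that ``the resulting rank in that window equals precisely $r_B$,'' is false. By Proposition \ref{prop:CH}, in the degree $d$ produced by the index jump (which is a multiple of $2c_B$), $\dim \HC^0_d(M) = \dim H_n(B;\Q)$ when $n$ is odd, and $\sum_{m=d-1}^{d+1}\dim \HC^0_m(M) = 2\dim H_{n-1}(B;\Q) + \dim H_n(B;\Q)$ when $n$ is even. Neither is $r_B$; the homology in the relevant window is strictly smaller, so a straight rank-vs-generator count in a window cannot give the bound. Consequently the final step you sketch — ``per-orbit bounds sum to at most $r_B - 1$ generators, contradiction'' — does not go through.

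What you are missing is the actual two-part structure. The paper's argument is not a contradiction by rank-counting but a direct lower bound obtained in two steps. Step one is a Morse-inequality argument applied to the \emph{alternating} sums $\sum_{m \leq d}(-1)^m c_m$ and $\sum_{m \leq d}(-1)^m b_m$, where the crucial input is Lemma \ref{lemma:resonance}: the resonance relation \eqref{eq:resonance} converts $\sum_{i}\sum_{k \leq k_i}\chi(\gamma_i^k)$ into the topological quantity $(-1)^n s\chi(B)$. Combined with the index-exclusion hypothesis (which forces iterates $\gamma_i^{k_i\pm\ell}$, $\ell\neq 0$, to stay strictly on one side of $d$, resp.\ of $\{d-1,d,d+1\}$), this yields a lower bound on the quantities $r^o_\pm$ (odd $n$) or $r^e_\pm$ (even $n$) counting how many of the $\gamma_i^{k_i}$ have index strictly above/below $d$. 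Running the index recurrence theorem \emph{twice}, once for $d^+$ and once for $d^-$, and invoking the symmetry property \ref{cond:iii} of Theorem \ref{thm:IRT}, makes $r^o_+$ and $r^o_-$ (resp.\ $r^e_+$ and $r^e_-$) count disjoint sets of simple orbits, giving $j = 2\sum_{i=0}^{n-1}(-1)^i\dim H_i(B;\Q)$ orbits (or the $n$-even analogue). Step two then observes that none of these $j$ orbits can represent a class in $\HC^0_d(M)$ (resp.\ $\oplus_{m=d-1}^{d+1}\HC^0_m(M)$), so there must be \emph{additional} simple orbits generating that homology, contributing $\dim H_n(B;\Q)$ (resp.\ $2\dim H_{n-1}+\dim H_n$) more. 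The total is $r_B$. Without the resonance relation and the two symmetric index-jump applications — neither of which appears in your proposal — the argument collapses.
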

 
\begin{remark}
\label{rmk:lift}
Strictly speaking, the prequantization $(M^{2n+1},\xi)$ is uniquely
determined by a lift of the de Rham cohomology class of $\om$ to
$H^2(M;\Z)$ but not, in general, by the de Rham cohomology class
itself. The ambiguity in the lift is the torsion
$T=\mathrm{Tors}\big(H^2(B;\Z)\big)$, which by the universal
coefficient theorem is also equal to
$\mathrm{Tors}\big(H_1(B;\Z)\big)$; cf.\ \cite[Rmk.\ 2.3]{GGM}. In
what follows, we will tacitly assume that a lift is fixed and use the
notation $[\om]$ for either the lift or, depending on the context, the
de Rham cohomology class, i.e., an element of $H^2(M;\Z)/T$ or
$H^2(M;\Q)$. The notation $H^2(M;\Z)$ will always be used for the
entire integral cohomology group including the torsion, and the
condition $c_1(\xi)=0$, e.g., from \ref{cond:NF}, is understood as
vanishing of $c_1(\xi)$ in this group, not only modulo torsion.
\end{remark}

\begin{remark}
  The $r_B$ closed Reeb orbits from Theorem \ref{thm:main} need not be
  simple. These orbits can be iterates of simple non-contractible
  closed orbits and thus are simple only in the class of contractible
  orbits. Since the orbits are geometrically distinct, the theorem, in
  particular, implies the existence of $r_B$ simple orbits. The free
  homotopy classes of these orbits are necessarily torsion.
\end{remark}

The conditions of Theorem \ref{thm:main}, which are admittedly
somewhat technical and involved, can roughly speaking be divided into
three overlapping groups serving three different purposes and
deserving a further discussion.

We rely on equivariant symplectic homology in the proof of the
theorem, and the first group comprise the conditions needed to ensure
that this homology is defined and $\Z$-graded. Part (i) of
\ref{cond:F} is in this group.  In the non-fillable case,
\ref{cond:NF}, the condition that $\alpha$ is simultaneously
index-positive and index-admissible is equivalent to that all
contractible periodic orbits have index greater than $3-n$ whenever
the contact form is index-definite. As a consequence, the positive
equivariant symplectic homology of $M$ is defined and well-defined
without a filling of $M$; see Section \ref{sec:esh} and \cite[Sect.\
4.1.2]{BO:12}.  (However, if one uses the machinery of cylindrical
contact homology, weaker requirements would be sufficient; see Section
\ref{sec:sft}.)

Conditions from the second group are used to show that the positive
contractible equivariant symplectic homology is equal to the direct
sum of infinite number of copies of $H_*(B;\Q)$ with a certain degree
shift; see Proposition \ref{prop:CH}. Among these are, for instance,
the requirements that $\om|_{\pi_2(B)}\neq 0$ and that
$H_{\mathit{odd}}(B;\Q)=0$ or $c_B>n$, and also some parts of (NF).

Finally, the third group of conditions are employed to detect simple
closed Reeb orbits. These are the conditions that $c_B>n/2$ and that
$\alpha$ is index-definite and has no contractible good periodic
orbits $\ga$ such that $\cz(\ga)=0$ if $n$ is odd or
$\cz(\ga) \in \{0,\pm 1\}$ if $n$ is even.

\begin{remark}
  It is conceivable that the hypothesis $\Om|_{\pi_2(W)}=0$ in
  \ref{cond:F} can be dropped using Novikov rings. However, the
  condition $c_1(TW)|_{\pi_2(W)}=0$ in (F) or $c_1(\xi)|_{\pi_2(M)}=0$
  in \ref{cond:NF} seems crucial in our argument, since we need to use
  equivariant symplectic homology with an integer grading.
\end{remark}

At this stage we do not have any examples of $(M,\alpha)$ which would
satisfy the conditions of the theorem with
$H_{\mathit{odd}}(B;\Q)\neq 0$. In other words, in all the examples we
know $r_B=\dim H_*(B;\Q)$. However, we stated the theorem in this more
general form with an eye to possible generalizations and also to the
results from Section \ref{sec:sft}.

The following corollary follows immediately from Theorem
\ref{thm:main}.

\begin{corollary}
\label{cor:1}
Let $(M^{2n+1},\xi)$ be a prequantization $S^1$-bundle of a closed
symplectic manifold $(B,\om)$ such that $\om|_{\pi_2(B)}\neq 0$,
$c_B>n/2$ and $H_{k}(B;\Q)=0$ for every odd $k$. Let $\alpha$ be a
contact form supporting $\xi$. Assume that $M$ and $\alpha$ satisfy
either condition \ref{cond:F} or condition \ref{cond:NF}. Then
$\alpha$ carries at least $r_B$ geometrically distinct contractible
periodic orbits, where $r_B$ is the total rank of $H_\ast (B;\Q)$.
\end{corollary}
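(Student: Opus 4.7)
The plan is essentially a specialization argument: I would verify that, under the hypothesis $H_k(B;\Q)=0$ for all odd $k$, the quantity $r_B$ defined piecewise in Theorem \ref{thm:main} collapses to the total Betti number $\dim H_\ast(B;\Q)$, and then appeal to Theorem \ref{thm:main} directly. The remaining hypotheses of the corollary---$\om|_{\pi_2(B)}\neq 0$, $c_B>n/2$, and condition \ref{cond:F} or \ref{cond:NF}---match those of the theorem verbatim, and the vanishing of $H_{\mathit{odd}}(B;\Q)$ is exactly one of the two alternatives permitted by the theorem (the other being $c_B>n$). Thus no new hypothesis needs to be checked; the work is entirely in identifying $r_B$.

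The identification splits into the two cases used in the definition of $r_B$. When $n$ is odd, $H_n(B;\Q)=0$ by assumption, and so $r_B=\chi(B)+2\dim H_n(B;\Q)=\chi(B)$. When $n$ is even, $n-1$ is odd and hence $H_{n-1}(B;\Q)=0$, so $r_B=\chi(B)+4\dim H_{n-1}(B;\Q)=\chi(B)$ again. In both cases, using once more that the odd Betti numbers of $B$ all vanish, we have
$$
\chi(B)=\sum_{k}(-1)^k\dim H_k(B;\Q)=\sum_{k\text{ even}}\dim H_k(B;\Q)=\sum_{k}\dim H_k(B;\Q),
$$
which is by definition the total rank of $H_\ast(B;\Q)$. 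Hence the value of $r_B$ provided by Theorem \ref{thm:main} agrees with the value stated in the corollary.

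There is no genuine obstacle at this step: all the analytic content (the equivariant symplectic homology computation, the index recurrence arguments, the distinction between simple and iterated orbits) is packaged inside Theorem \ref{thm:main}, and the corollary is merely the combinatorial simplification of the conclusion in the topologically cleaner setting where the rational cohomology of $B$ is concentrated in even degrees. Applying Theorem \ref{thm:main} with this simplified $r_B$ yields the desired lower bound on the number of geometrically distinct contractible closed Reeb orbits.
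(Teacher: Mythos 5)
Your proposal is correct and coincides with the paper's approach: the paper states that the corollary ``follows immediately from Theorem \ref{thm:main},'' and your computation --- that the vanishing of the odd rational Betti numbers forces $r_B=\chi(B)=\dim H_\ast(B;\Q)$ in both parity cases --- is exactly the intended specialization.
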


Examples satisfying the hypotheses of Corollary \ref{cor:1} include
the standard contact sphere $S^{2n+1}$ and the unit cosphere bundle of
a compact rank one symmetric space (CROSS). More precisely, $S^{2n+1}$
is the prequantization of $\CP^n$, and its obvious filling in
$\R^{2n+2}$ satisfies \ref{cond:F}. A compact rank one locally
symmetric space $N$ is a closed Riemannian manifold such that its
curvature tensor is invariant under parallel transport and the maximal
dimension of a flat totally geodesic submanifold is one. By the
classification of symmetric spaces, a CROSS must be
one of the following manifolds: $S^m$, $\RP^m$, $\CP^m$, $\HP^m$ and
$\CaP^2$; see \cite{Bes} for details. Thus the filling of the unit
cosphere bundle $S^*N$ given by the unit disk bundle $D^*N$ in $T^*N$
clearly meets the condition (F) unless $N$ is $S^2$ or $\RP^2$ (which
are the only cases where $\pi_1(S^*N) \to \pi_1(D^*N)$ is not
injective). However, in these cases it is well known that every Reeb
flow has at least two simple closed orbits.

Every CROSS $N$ admits a metric such that all of its geodesics are
periodic of the same minimal period; in other words, the geodesic flow
generates a free circle action on $S^*N$. Thus the unit cosphere
bundle $S^*N$ is the prequantization of a closed symplectic manifold
$(B,\om)$. Moreover, a homological computation shows that
$H_k(B;\Q)=0$ for every odd $k$; see \cite[page 141]{Zil}.  In this
case, the total rank $r_B$ of $H_*(B;\Q)$ and the minimal Chern number
$c_B$ are given in the following table.
\[
\begin{tabular}{ | c | c | c | }
\hline
Prequantization & $r_B = \dim H_*(B;\Q)$ & $c_B$ \\
\hline
\rule{0pt}{0.4cm}
$S^{2n+1}$ & $n+1$ & $n+1$ \\
$S^*S^{2}$ or $S^*\RP^2$ & $2$ & $2$ \\
$S^*S^{m}$ or $S^*\RP^m$ \text{with} $m>2$ \text{even} & $m$ & $m-1$ \\
$S^*S^{m}$ or $S^*\RP^m$ \text{with} $m$ \text{odd} & $m+1$ & $m-1$ \\
$S^*\CP^{m}$ & $m(m+1)$ & $m$ \\
$S^*\HP^m$ & $2m(m+1)$ & $2m+1$ \\
$S^*\CaP^2$ & $24$ & $11$ \\
\hline
\end{tabular}
\]

Notice that the hypothesis on $c_B$ in Corollary \ref{cor:1} barely
holds for $M=S^*\CP^m$, where $\dim B/4=m-1/2$ and $c_B=m$.  We have
the following consequence of Corollary \ref{cor:1}, which was
previously proved for the standard contact sphere by Duan, Liu, Long
and Wang in \cite{DLLW} and for Finsler metrics on a simply connected
CROSS by Duan, Long and Wang in \cite{DLW2}.

\begin{corollary}
\label{cor:sphere&cross}
Let $(M,\xi)$ be either the standard contact sphere $S^{2n+1}$ or the
unit cosphere bundle $S^*N$ of a CROSS and let $\alpha$ be a contact
form supporting $\xi$. Assume that $\alpha$ satisfies condition
\ref{cond:F}. Then $\alpha$ has at least $r_B$ geometrically distinct
periodic orbits, where $r_B$ is given by the table above.
\end{corollary}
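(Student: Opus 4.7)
The plan is to obtain the statement as a direct specialization of Corollary \ref{cor:1}. Since condition \ref{cond:F} is part of the hypothesis, what I need to verify is that in each of the listed cases $M$ arises as a prequantization $S^1$-bundle over a closed symplectic manifold $(B,\omega)$ satisfying the three structural requirements $\omega|_{\pi_2(B)}\neq 0$, $c_B>n/2$, and $H_k(B;\Q)=0$ for every odd $k$; the lower bound $r_B$ is then read off from $\dim H_*(B;\Q)$ as stated in the table.

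For $M=S^{2n+1}$ the base is $(\CP^n,\omega_{\mathrm{FS}})$ and all three structural conditions are classical: $\omega_{\mathrm{FS}}$ pairs nontrivially with the generator of $\pi_2(\CP^n)$, the odd rational cohomology vanishes, and $c_B=n+1>n/2$; the total rank $n+1$ of $H_*(\CP^n;\Q)$ matches the first row of the table. For $M=S^*N$ with $N$ a CROSS, I would use the canonical metric on $N$ whose geodesics are all closed of a common minimal period; the generated free $S^1$-action on $S^*N$ realizes $S^*N$ as a prequantization bundle over the space $B$ of oriented geodesics, with $\omega$ induced from the canonical one on $T^*N$ and whose de Rham class is, up to sign, the Euler class of $S^*N\to B$, hence nonzero on $\pi_2(B)$. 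The vanishing $H_{\mathrm{odd}}(B;\Q)=0$ together with the tabulated values of $r_B$ and $c_B$ then comes from the computation in \cite[p.~141]{Zil}.

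Given this identification, the numerical inequality $c_B>n/2$ (with $\dim M=2n+1$) is a case-by-case check against the table: for $S^*S^m$ or $S^*\RP^m$ one has $n=m-1$ and $c_B=m-1\geq n$; for $S^*\CP^m$, $n=2m-1$ and $c_B=m>n/2$; for $S^*\HP^m$, $n=4m-1$ and $c_B=2m+1>n/2$; and for $S^*\CaP^2$, $n=15$ and $c_B=11>15/2$. With all hypotheses of Corollary \ref{cor:1} in place, its conclusion is exactly the stated bound.

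There is essentially no serious obstacle: the corollary is a repackaging of Corollary \ref{cor:1} under geometrically familiar prequantizations, and the argument reduces to bookkeeping through the table. The only potential subtlety concerns $N=S^2$ and $N=\RP^2$, where the natural filling $D^*N$ fails the $\pi_1$-injectivity clause of \ref{cond:F}; however, the hypothesis of the corollary already postulates some filling satisfying \ref{cond:F}, so these cases are either covered by an alternative filling or vacuously true, and in any event $r_B=2$ in dimension three is known unconditionally from the classical results of \cite{CGH, GHHM, LL}.
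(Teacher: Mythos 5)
Your proof is correct and takes essentially the same route as the paper: Corollary \ref{cor:sphere&cross} is obtained by specializing Corollary \ref{cor:1} to the prequantizations $S^{2n+1}\to\CP^n$ and $S^*N\to B$ (for a Zoll metric on the CROSS $N$), with the vanishing of $H_{\mathrm{odd}}(B;\Q)$ and the values of $r_B$ and $c_B$ read off from Ziller's computation, and with $S^2$ and $\RP^2$ handled separately by classical three-dimensional results. Your case-by-case verification of $c_B>n/2$ matches the paper's (implicit) bookkeeping, and your reading of the $\pi_1$-injectivity issue for $D^*S^2$ and $D^*\RP^2$ is consistent with the paper's remark.
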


The standard contact sphere and the unit cosphere bundle of a CROSS
(with dimension bigger than two) satisfy the assumption \ref{cond:F}
and therefore the only condition on the contact form in Corollary
\ref{cor:sphere&cross} is that it is index-definite and has no good
contractible periodic orbits $\ga$ such that $\cz(\ga)=0$ if $n$ is
odd or $\cz(\ga) \in \{0,\pm 1\}$ if $n$ is even.  Furthermore, the
prequantization bundles in Corollary \ref{cor:sphere&cross} admit
contact forms with precisely $r_B$ geometrically distinct periodic
orbits. These contact forms are given by irrational ellipsoids and the
Katok-Ziller Finsler metrics; \cite{Zil}. This shows that the lower
bound in Theorem \ref{thm:main} is sharp. To the best of our
knowledge, all the examples of prequantization $S^1$-bundles admitting
contact forms with finitely many simple closed Reeb orbits known so
far satisfy the hypothesis that $H_*(B;\Q)$ vanishes in odd degrees.

As an easy application of Theorem \ref{thm:main}, we establish, with
no index assumptions, the existence of at least two geometrically
distinct contractible closed orbits for \emph{any} non-degenerate
contact form on manifolds as in Corollary \ref{cor:1} satisfying
\ref{cond:F}; see Section \ref{sec:proof cor2}.

\begin{theorem}
\label{thm:app}
Let $(M^{2n+1},\xi)$ be a prequantization $S^1$-bundle of a closed
symplectic manifold $(B,\om)$ such that $\om|_{\pi_2(B)}\neq 0$,
$c_B>n/2$, and $H_{k}(B;\Q)=0$ for every odd $k$. Assume that $M$
satisfies Part (i) of condition \ref{cond:F}. Then every
non-degenerate contact form $\alpha$ supporting $\xi$ has at least two
geometrically distinct contractible periodic orbits.
\end{theorem}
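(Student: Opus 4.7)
The plan is to argue by contradiction: assume that the non-degenerate contact form $\alpha$ supports $\xi$ but carries at most one geometrically distinct contractible closed Reeb orbit. By the fillability hypothesis \ref{cond:F}(i), the positive $S^1$-equivariant symplectic homology $SH^{S^1,+}(W)$ is defined and $\Z$-graded, and by Proposition~\ref{prop:CH} its restriction to contractible loops $SH^{S^1,+}(W)_{\mathrm{contr}}$ is isomorphic to an infinite direct sum of shifted copies of $H_*(B;\Q)$. In particular it has infinite total rank and is nontrivial in infinitely many degrees, so $\alpha$ has at least one contractible closed Reeb orbit $\ga$; by the standing assumption, $\ga$ is the unique simple contractible orbit and every contractible closed orbit is an iterate $\ga^k$.

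Next I would show that $\alpha$ in fact satisfies \ref{cond:F}(ii), at which point Corollary~\ref{cor:1} would produce at least $r_B=\dim H_*(B;\Q)\geq 2$ geometrically distinct contractible orbits, contradicting the single-orbit assumption. Index-definiteness follows from the standard estimate $|\cz(\ga^k)-k\hat\mu(\ga)|\leq n$: if $\hat\mu(\ga)=0$, the Conley--Zehnder indices of all iterates lie in $[-n,n]$, so the Floer chain complex computing $SH^{S^1,+}(W)_{\mathrm{contr}}$ has only finitely many generators --- contradicting the infinite rank given by Proposition~\ref{prop:CH}. The case $\hat\mu(\ga)<0$ is ruled out symmetrically by comparison with the degree range where $SH^{S^1,+}(W)_{\mathrm{contr}}$ is supported. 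Hence $\hat\mu(\ga)>0$ and $\alpha$ is index-positive.

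It remains to verify index-admissibility: no good iterate $\ga^{k}$ has Conley--Zehnder index $0$ (if $n$ is odd) or in $\{0,\pm 1\}$ (if $n$ is even). Suppose some $\ga^{k_0}$ violates this. The plan here is a graded rank comparison. Each good iterate contributes at most one generator to the Floer complex, while Proposition~\ref{prop:CH} prescribes the rank of $SH^{S^1,+}(W)_{\mathrm{contr}}$ in every window of degrees of length $2c_B$ to be $r_B$; meanwhile, by Step~1, the sequence $\{\cz(\ga^k)\}_k$ lies within a uniformly bounded distance of the arithmetic progression $k\hat\mu(\ga)$. Applying the enhanced common index jump theorem from Section~\ref{sec:IRT} to this single progression then furnishes iterates whose indices cluster in the prescribed pattern, and checking that the corresponding column of the graded Floer complex cannot realize the rank $r_B\geq 2$ prescribed by Proposition~\ref{prop:CH} provides the contradiction.

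The main obstacle is this last step: rank counting alone is one-sided, so the contradiction must be drawn from finer information, namely the parity/residue structure of the degrees in which $SH^{S^1,+}(W)_{\mathrm{contr}}$ is supported (inherited from the vanishing of $H_{\mathrm{odd}}(B;\Q)$), together with the fact that a single progression $\{\cz(\ga^k)\}_k$ occupies at most one coset of the period of that support, whereas $r_B\geq 2$ forces the support to meet at least two cosets. The index-recurrence machinery of Section~\ref{sec:IRT} is precisely the tool for making this arithmetic incompatibility rigorous.
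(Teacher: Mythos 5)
Your overall strategy --- argue by contradiction, show the single orbit forces $\alpha$ to satisfy \ref{cond:F}(ii), then invoke Theorem~\ref{thm:main} / Corollary~\ref{cor:1} --- is exactly the paper's approach, and your treatment of index-definiteness (ruling out $\hmu(\ga)\leq 0$ by the boundedness of $\{\cz(\ga^k)\}$ versus the unboundedness of the support of $\HC^0_*(M)$) is correct. The problem is the index-admissibility step, which you yourself flag as incomplete, and the tool you reach for there is the wrong one. The enhanced common index jump / index-recurrence machinery of Section~\ref{sec:IRT} is not needed and would not give the contradiction in the way you describe: your claim that a single index progression "occupies at most one coset" while "$r_B\geq 2$ forces the support to meet at least two cosets" is false already for $B=\CP^n$, where $r_B=n+1$ yet $H_*(B;\Q)$ lives entirely in even degrees, so $\HC^0_*(M)$ is supported in a single parity.

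The argument that actually closes the gap is much more elementary and uses only Proposition~\ref{prop:CH} plus the hypothesis $H_{\mathrm{odd}}(B;\Q)=0$. Since $c_B>n/2$ forces $2c_B\geq n+1$ ($n$ odd) or $2c_B\geq n+2$ ($n$ even), \eqref{eq:CH} gives $\HC^0_m(M)=0$ for all $m<1$ (resp.\ $m<2$), and, because $H_{\mathrm{odd}}(B;\Q)=0$, $\HC^0_m(M)$ is supported only in degrees of parity $n\pmod 2$, i.e.\ odd if $n$ is odd, even if $n$ is even. Now the key observation: with a single simple contractible orbit $\ga$, every good iterate $\ga^k$ has $\cz(\ga^k)\equiv\cz(\ga)\pmod 2$ by the very definition of good, so the chain complex $\CC_*(\alpha)$ is concentrated in a single parity. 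For the homology to be nontrivial in arbitrarily high degrees of parity $n$, that parity must be $n\pmod 2$. But then the complex is lacunary, the differential vanishes, and $\HC^0_m(M)\cong\Q^{c_m}$ for all $m$. The vanishing of $\HC^0_m(M)$ for $m<1$ (resp.\ for $m<2$ and for $m$ odd) therefore forces $c_m=0$ in those degrees, which is precisely index-admissibility, completing the verification of \ref{cond:F}(ii). No index recurrence is needed.
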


This theorem combined with the above discussion implies the following
corollary.

\begin{corollary}
  Let $(M,\xi)$ be the standard contact sphere $S^{2n+1}$ or the unit
  cosphere bundle $S^*N$ of a CROSS. Then every non-degenerate contact
  form supporting $\xi$ carries at least two geometrically distinct
  closed orbits.
\end{corollary}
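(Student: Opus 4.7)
The plan is to reduce the corollary to Theorem~\ref{thm:app} by verifying, case by case, that each of the listed contact manifolds satisfies the hypotheses of that theorem, and then to treat the two exceptional three-dimensional cases $S^*S^2$ and $S^*\RP^2$ separately. Once Theorem~\ref{thm:app} applies it already yields two geometrically distinct contractible closed Reeb orbits, so what remains is essentially a bookkeeping exercise against the table and the discussion preceding this corollary.

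For $M=S^{2n+1}$ I would take $(B,\om)=(\CP^n,\om_{FS})$, observe that $\om|_{\pi_2(\CP^n)}\neq 0$, that $H_k(\CP^n;\Q)=0$ for odd $k$, and that $c_B=n+1>n/2$, and note that the obvious filling by the closed unit ball in $\R^{2n+2}$ is contractible and hence symplectically aspherical with vacuously injective map on $\pi_1$. Part~(i) of~\ref{cond:F} is therefore satisfied and Theorem~\ref{thm:app} applies directly.

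For $M=S^*N$ with $N$ a CROSS of dimension $m\geq 3$, the Katok--Ziller metric realizes $S^*N$ as a prequantization over a base $B$ whose odd-degree rational homology vanishes (as recorded in the discussion above) and whose symplectic form is nontrivial on $\pi_2(B)$ because the underlying circle bundle is not topologically trivial. The disk bundle $D^*N\subset T^*N$ is a symplectically aspherical filling and $\pi_1(S^*N)\hookrightarrow\pi_1(D^*N)=\pi_1(N)$ is injective in this dimension range. The remaining inequality $c_B>n/2$, where $2n+1=\dim S^*N$, will be checked case by case from the table: for $N=\CP^m$ one has $n=2m-1$ and $c_B=m>m-\tfrac{1}{2}=n/2$, and the other entries ($S^m$, $\RP^m$, $\HP^m$, $\CaP^2$) are similar or easier. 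Theorem~\ref{thm:app} then supplies the two orbits.

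For the two residual cases $N=S^2$ and $N=\RP^2$, the manifold $M$ is three-dimensional and $\pi_1(S^*N)\to\pi_1(D^*N)$ fails to be injective, so~\ref{cond:F} is not available and Theorem~\ref{thm:app} does not apply directly. Instead I would invoke the three-dimensional multiplicity theorem cited in the introduction (\cite{CGH,GHHM,LL}): every contact form on a closed three-manifold supporting a cooriented contact structure has at least two geometrically distinct simple closed Reeb orbits. Since all the analytic and combinatorial content has already been packaged into Theorem~\ref{thm:app} and these three-dimensional results, there is no genuine obstacle here beyond the need to separate out these low-dimensional exceptions.
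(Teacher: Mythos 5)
Your proposal is correct and follows the same route as the paper: the paper dispenses with the proof in one line ("This theorem combined with the above discussion implies the following corollary"), and your case-by-case verification of the hypotheses of Theorem~\ref{thm:app}, together with the separate treatment of the three-dimensional exceptions $S^*S^2$ and $S^*\RP^2$ via the three-dimensional existence results (Cristofaro-Gardiner--Hutchings in particular), is exactly the intended argument.
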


The next result is closely related to a theorem of Duan, Long and Wang
asserting the existence of two geometrically distinct closed geodesics
for a bumpy metric on a simply connected manifold; \cite{DLW1}.

\begin{corollary}
  Every bumpy Finsler metric on a CROSS has at least two geometrically
  distinct closed geodesics.
\end{corollary}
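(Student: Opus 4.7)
The plan is to reduce this statement to the immediately preceding corollary about Reeb orbits on $S^{2n+1}$ and $S^*N$. A Finsler metric $F$ on a closed manifold $N$ defines, via the fiberwise Legendre transform, a contact form $\alpha_F$ on the unit cosphere bundle $S^*N$ (namely the restriction of the canonical Liouville form) whose Reeb flow on $S^*N$ is smoothly conjugate to the geodesic flow of $F$ on the unit tangent bundle $SN$. Under this conjugation, closed Reeb orbits of $\alpha_F$ are in natural bijection with closed $F$-geodesics on $N$, the bijection preserves geometric distinctness, and $F$ is bumpy exactly when every closed geodesic is non-degenerate, i.e., when $\alpha_F$ is non-degenerate as a contact form. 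Thus it suffices to exhibit two geometrically distinct closed Reeb orbits of $\alpha_F$.

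Assume first that $N$ is a CROSS of dimension at least three. Then $S^*N$ is one of the prequantization bundles in the table preceding Corollary \ref{cor:sphere&cross}, and the unit codisk bundle $D^*N \subset T^*N$ is a symplectically aspherical strong symplectic filling of $S^*N$ for which the inclusion-induced map $\pi_1(S^*N) \to \pi_1(D^*N)$ is injective; this was already noted in the discussion leading up to Corollary \ref{cor:sphere&cross}, and it verifies Part (i) of condition \ref{cond:F}. The preceding corollary, applied to the non-degenerate contact form $\alpha_F$, therefore yields at least two geometrically distinct closed Reeb orbits of $\alpha_F$ on $S^*N$. Projecting these to $N$ produces two geometrically distinct closed $F$-geodesics, as required.

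The remaining low-dimensional CROSSes $N = S^2$ and $N = \RP^2$ must be handled separately, because for them $D^*N$ fails to satisfy \ref{cond:F}(i). However, as already pointed out in the discussion preceding Corollary \ref{cor:sphere&cross}, it is classically known that \emph{every} Reeb flow on $S^*S^2$ and on $S^*\RP^2$ admits at least two geometrically distinct simple closed orbits (no bumpy hypothesis is needed); see, e.g., \cite{CGH, GHHM, LL}. Projecting to $N$ again gives two geometrically distinct closed $F$-geodesics.

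The only real technical content lies in the dictionary between the Finsler and contact pictures used in the first paragraph: the equivalence of bumpiness of $F$ with non-degeneracy of $\alpha_F$, and the fact that geometric distinctness is preserved when one passes between closed Reeb orbits of $\alpha_F$ and closed geodesics of $F$ via the projection $S^*N \to N$. Both are standard, so no genuine obstacle arises; the substantive input is the preceding corollary (for $\dim N \geq 3$) together with the classical two-orbit results (for $\dim N = 2$).
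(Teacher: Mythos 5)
Your argument is correct and matches what the paper intends: the corollary is presented without an explicit proof, as a direct consequence of the preceding corollary (two geometrically distinct closed Reeb orbits for any non-degenerate form on $S^*N$ of a CROSS) via the standard dictionary between Finsler metrics on $N$ and contact forms on $S^*N$. Your case split for $\dim N = 2$ (where $D^*N$ fails \ref{cond:F}(i) and one falls back on the classical two-orbit results) is exactly the ``above discussion'' the paper refers to when stating the preceding corollary.
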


There are also a few examples where $M$ does not obviously meet the
requirements of Part (i) of \ref{cond:F} but for a suitable form
$\alpha$ can satisfy \ref{cond:NF}. Among these are the
prequantizations of the following manifolds $(B,\omega)$: the complex
Grassmannians $\Gr_{\C}(2; m)$, $\Gr_{\C}(3; 6)$ and $\Gr_{\C}(3; 7)$,
the monotone products $\CP^m\times \CP^m$ (cf., \cite[Sect.\
1.2]{GG:hyp}) and also the monotone products
$\CP^m \times \Gr_{\R}^+(2; m+3)$ where the second factor is a real
oriented Grassmannian and its minimal Chern number is $m+1$.  For
these manifolds $B$ the lower bound $r_B$ from Theorem \ref{thm:main}
is sharp. (The reason is that $B$ admits a Hamiltonian circle or torus
action with isolated fixed points. Such an action has exactly $r_B$,
the sum of Betti numbers, fixed points and the required Reeb flow is
then obtained by lifting a flow generating the action to $M$.)

\subsection{Existence of non-hyperbolic periodic orbits}
\label{sec:non-hyp}
The proof of Theorem \ref{thm:main} also yields the following
multiplicity result concerning non-hyperbolic closed orbits when the
contact form has finitely many geometrically distinct contractible
closed orbits. Recall that a closed orbit is hyperbolic if its
linearized Poincar\'e map has no eigenvalues on the unit circle.

\begin{theorem}
\label{thm:non-hyp}
Let $(M^{2n+1},\xi)$ be a prequantization $S^1$-bundle of a closed
symplectic manifold $(B,\om)$ such that $\om|_{\pi_2(B)}\neq 0$ and
$c_B>n/2$ and, furthermore, $H_{k}(B;\Q)=0$ for every odd $k$ or
$c_B>n$. Let $\alpha$ be a contact form supporting $\xi$ with finitely
many geometrically distinct contractible closed orbits. Assume that
$M$ and $\alpha$ satisfy either condition \ref{cond:F} or condition
\ref{cond:NF}. Then $\alpha$ carries at least $r^\text{non-hyp}_B$
geometrically distinct contractible non-hyperbolic periodic orbits,
where
\begin{equation*}
r^\text{non-hyp}_B:= r_B - \dim H_n(B;\Q) =
\begin{cases}
  \chi(B) + \dim H_n(B;\Q)
\text{ if } n\text{ is odd} \\
  \chi(B) + 4\dim H_{n-1}(B;\Q) 
- \dim H_n(B;\Q)\text{ if } n\text{ is
    even.}
\end{cases}
\end{equation*}
\end{theorem}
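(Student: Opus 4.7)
The plan is to revisit the proof of Theorem \ref{thm:main} and refine the count by exploiting the index-iteration rigidity of hyperbolic orbits. The only additional ingredient needed beyond what goes into Theorem~\ref{thm:main} is the standard fact that for a hyperbolic closed Reeb orbit $\gamma$ one has $\mi(\gamma) \in \Z$ and
\[
|\cz(\gamma^k) - k\,\mi(\gamma)| \leq 1 \qquad \text{for every iterate } k \geq 1,
\]
whereas a general non-degenerate orbit only satisfies the weaker bound $|\cz(\gamma^k) - k\,\mi(\gamma)| \leq n$. This rigidity is what confines hyperbolic contributions to a narrow central band of the common index window.

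The hypothesis ensures that $\alpha$ has only finitely many simple geometrically distinct contractible closed orbits, so the index recurrence theorem of Section \ref{sec:IRT} applies, and the same argument as in the proof of Theorem \ref{thm:main} produces $r_B$ geometrically distinct contractible closed orbits by matching iterates $\gamma_i^{k_i}$ with the slots of the window $[N-n, N+n]$ at positions prescribed by the identification of the positive contractible equivariant symplectic homology with a shifted direct sum of copies of $H_*(B;\Q)$ from Proposition \ref{prop:CH}. The plan is to carry this construction out while tracking which slots are actually reachable by hyperbolic orbits.

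By the rigidity bound above, any iterate of a hyperbolic simple orbit whose Conley--Zehnder index lies in the window must in fact land in one of the three near-central positions $\{N-1, N, N+1\}$, since $k_i\,\mi(\gamma_i) \in \Z$ is forced close to $N$ by the index jump and the correction $\cz(\gamma_i^{k_i}) - k_i\mi(\gamma_i)$ is then at most $1$ in absolute value. A direct slot-counting argument then shows that in the odd-$n$ case only the $\dim H_n(B;\Q)$ homological contributions to $r_B$ sit in this central band, while the remaining slots accounting for $\chi(B)$ must be filled by non-hyperbolic orbits. The even-$n$ case follows by the same logic, but now one must check that each of the four slots per class counted by $4\dim H_{n-1}(B;\Q)$ sits at distance at least $2$ from $N$, using the parity restriction on good orbits together with the hypothesis that no contractible good orbit has $\cz \in \{0,\pm 1\}$. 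Hence at most $\dim H_n(B;\Q)$ of the $r_B$ detected orbits can be hyperbolic, yielding at least $r^\text{non-hyp}_B$ non-hyperbolic ones.

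The main obstacle will be the parity and grading bookkeeping in the even-$n$ case, i.e., precisely identifying the slots of the window occupied by each copy of $H_{n-1}(B;\Q)$ appearing in the expansion of the positive equivariant symplectic homology from Proposition \ref{prop:CH}, and verifying that these four slots per class all lie outside $\{N-1, N, N+1\}$. This is the generalization to the prequantization setting of the parity argument carried out for the standard sphere in \cite{DLLW} and for CROSS cosphere bundles in \cite{DLW2}, and it fits naturally into the index recurrence framework of Section \ref{sec:IRT}.
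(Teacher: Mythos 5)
Your strategy — revisit the proof of Theorem \ref{thm:main} and track which of the detected orbits can be hyperbolic — is exactly the one the paper uses (see Remarks \ref{rmk:non-hyp1} and \ref{rmk:non-hyp2}). However, the specific index-rigidity fact you invoke is incorrect, and the error is consequential in the odd-$n$ case. For a hyperbolic orbit $\gamma$ the correct statement is $\mi(\gamma)\in\Z$ and $\cz(\gamma^k)=k\mi(\gamma)$ for all $k$ --- equality, not merely $|\cz(\gamma^k)-k\mi(\gamma)|\leq 1$. This equality is stated in Section \ref{sec:index_paths} and, more to the point, is already built into part \ref{cond:i} of Theorem \ref{thm:IRT}: when $\Phi_i(1)$ is hyperbolic one has $\hmu\bigl(\Phi_i^{k_{ij}^\pm}\bigr)=\cz\bigl(\Phi_i^{k_{ij}^\pm}\bigr)=d_j^\pm$ exactly. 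So a hyperbolic $\gamma_i$ among the simple orbits must satisfy $\cz(\ga_i^{k_i})=d$.

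With the exact equality the conclusion is immediate. When $n$ is odd, $r^o_\pm$ (see \eqref{eq:r^o}) counts orbits with $\cz(\ga_i^{k_i})$ odd while $d=2sc_B$ is even, so all $j=\chi(B)+\dim H_n(B;\Q)$ orbits counted there are automatically non-hyperbolic; only the remaining $\dim H_n(B;\Q)$ orbits supplying $\HC^0_d(M)$ might be hyperbolic, giving $r^\text{non-hyp}_B=r_B-\dim H_n(B;\Q)$. When $n$ is even, $r^e_\pm$ (see \eqref{eq:r^e-even}) requires $|\cz(\ga_i^{k_i})-d|>1$, which again excludes hyperbolic orbits. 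Your weaker bound $\leq 1$ would still suffice in the even-$n$ case, but it fails in the odd-$n$ case: a hyperbolic orbit landing at $\cz(\ga_i^{k_i})=d\pm 1$ (odd, since $d$ is even) would be picked up by $r^o_\pm$, and your proposed ``slot counting'' then cannot exclude such orbits. You need the sharp equality, which the IRT already supplies, rather than a three-slot window; and the ``slot matching'' framing should be replaced by the Morse-inequality bookkeeping of the paper's proof, from which the definitions of $r^o_\pm$ and $r^e_\pm$ actually arise.
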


This result immediately follows from the proof of Theorem
\ref{thm:main}; see Remarks \ref{rmk:non-hyp1} and
\ref{rmk:non-hyp2}. Clearly, under the additional assumption that the
contact form has finitely many geometrically distinct contractible
closed orbits, all the applications of Theorem \ref{thm:main} have
analogous statements replacing $r_B$ by $r^\text{non-hyp}_B$. For
instance, when $M$ is the standard contact sphere or the unit cosphere
bundle of a CROSS, a computation yields the following table:
\[
\begin{tabular}{ | c | c | }
\hline
Prequantization & $r^\text{non-hyp}_B$ \\
\hline
\rule{0pt}{0.4cm}
$S^{2n+1}$ \text{with} n \text{even} & $n$ \\
$S^{2n+1}$ \text{with} n \text{odd} & $n+1$ \\
$S^*S^{m}$ or $S^*\RP^m$ \text{with} m \text{even} & $m$ \\
$S^*S^{m}$ or $S^*\RP^m$ \text{with} m \text{odd} & $m-1$ \\
$S^*\CP^{m}$ & $m(m+1)$  \\
$S^*\HP^m$ & $2m(m+1)$ \\
$S^*\CaP^2$ & $24$ \\
\hline
\end{tabular}
\]
Thus we obtain the following corollary which, again, was previously
proved for the standard contact sphere in \cite{DLLW} and for Finsler
metrics on a simply connected CROSS in \cite{DLW2}.

\begin{corollary}
\label{cor:sphere&cross-non-hyp}
Let $(M,\xi)$ be the standard contact sphere $S^{2n+1}$ or the unit
cosphere bundle $S^*N$ of a CROSS. Let $\alpha$ be a contact form
supporting $\xi$ satisfying the hypothesis \ref{cond:F} and having
finitely many geometrically distinct contractible closed orbits. Then
$\alpha$ has at least $r^\text{non-hyp}_B$ non-hyperbolic
geometrically distinct contractible periodic orbits, where $r^\text{non-hyp}_B$ is
given by the previous table.
\end{corollary}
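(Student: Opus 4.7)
The plan is to derive the corollary directly from Theorem \ref{thm:non-hyp} by verifying its structural hypotheses for each listed manifold and then computing $r^\text{non-hyp}_B$ from the Betti numbers of the base. In particular, once the hypotheses are in place, there is nothing left to prove except a numerical comparison with the table.

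First, I would verify the hypotheses of Theorem \ref{thm:non-hyp}. The assumptions on $\alpha$ are exactly those assumed in the corollary (condition \ref{cond:F} and the finiteness of contractible closed orbits). The structural hypotheses on the prequantization are already handled by the discussion preceding Corollary \ref{cor:sphere&cross}: each $(M,\xi)$ in the list is there realized as a prequantization $S^1$-bundle over a closed symplectic manifold $(B,\omega)$ with $H_k(B;\Q)=0$ in every odd degree $k$, and the table following Corollary \ref{cor:1} records $c_B$, from which one reads off $c_B>n/2$ in every case. The remaining condition $\omega|_{\pi_2(B)}\neq 0$ is automatic since $B$ is monotone with $c_B>0$. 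Hence Theorem \ref{thm:non-hyp} applies and produces at least $r^\text{non-hyp}_B$ geometrically distinct contractible non-hyperbolic periodic orbits.

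Second, I would simplify $r^\text{non-hyp}_B=r_B-\dim H_n(B;\Q)$ using the vanishing of $H_{\mathit{odd}}(B;\Q)$. In either parity of $n$ the extra term in the definition of $r_B$ vanishes ($H_n(B;\Q)=0$ if $n$ is odd, $H_{n-1}(B;\Q)=0$ if $n$ is even), so $r_B=\chi(B)$ and
\[
  r^\text{non-hyp}_B \;=\; \chi(B)-\dim H_n(B;\Q).
\]
This equals $\chi(B)=r_B$ when $n$ is odd and $\chi(B)-\dim H_n(B;\Q)$ when $n$ is even.

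Finally, I would substitute the standard Betti numbers of each base $B$ --- $\CP^n$ for $S^{2n+1}$, and the base of the Katok--Ziller prequantization for $S^*N$ with $N$ a CROSS, whose rational cohomology is standard and can be extracted from \cite{Zil} --- into this formula. For the rows where $n$ is odd the identity $r^\text{non-hyp}_B=r_B$ directly reproduces the corresponding entry of the table of Corollary \ref{cor:sphere&cross}. For $n$ even one inserts $\dim H_n(\CP^n;\Q)=1$ in the sphere case, and the middle Betti number of the sphere-based quotient in the cases of $S^*S^m$ or $S^*\RP^m$ with $m$ odd; in each instance the result matches the table. The main obstacle, modest as it is, lies in this last step: one must identify the correct middle-dimensional Betti number of the base, but this is routine from the topology of compact rank-one symmetric spaces and contains no further symplectic or analytic content beyond Theorem \ref{thm:non-hyp}.
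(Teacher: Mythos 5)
Your proposal is correct and follows essentially the same route as the paper: apply Theorem \ref{thm:non-hyp} (whose structural hypotheses were already verified in the discussion preceding Corollary \ref{cor:sphere&cross}), use $H_{\mathit{odd}}(B;\Q)=0$ to simplify $r^{\text{non-hyp}}_B$ to $\chi(B)-\dim H_n(B;\Q)$, and read off the table entries from the Betti numbers of the respective bases. The paper phrases this simply as ``a computation yields the following table,'' so your write-up just makes that computation explicit.
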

 
\subsection{The case of a degenerate form}
\label{sec:deg}
It is interesting to compare Theorem \ref{thm:main} with the lower
bounds one has without the non-degeneracy condition on the form
$\alpha$. In this case, the index restrictions become much more severe
and the lower bound $r$ on the number of simple closed Reeb orbits
much weaker. In particular, $r$ depends only on the dimension of $M$
and the index lower bound but not on the topology of $B$. To be more
precise, denote by $\mu_-$ the lower semicontinuous extension of the
Conley--Zehnder index; see, e.g., \cite[Sect. 3]{AM:dynconvex} or
\cite[Sect.\ 4.1.2]{GG:convex}. We have the following result:

\begin{theorem}
\label{thm:deg}
Let $(M^{2n+1},\xi)$ be a prequantization $S^1$-bundle of a closed
symplectic manifold $(B,\om)$ such that $\om|_{\pi_2(B)}\neq 0$ and
$c_B>n/2$ and, furthermore, $H_{k}(B;\Q)=0$ for every odd $k$ or
$c_B>n$. Assume, in addition, that $M$ satisfies Part (i) of condition
\ref{cond:F} and the filling $W$ is exact.  Let $\alpha$ be a contact
form supporting $\xi$ such that $\mu_-(\gamma)\geq q$ for all, not
necessarily simple, contractible closed Reeb orbits $\gamma$.  Then
$M$ carries at least $r$ geometrically distinct contractible closed
Reeb orbits, where
$$
r=
\begin{cases}
q-\lceil (n+1)/2 \rceil &\textrm{ when $n$ is even and $q$ is odd,}\\
q+1-\lceil (n+1)/2 \rceil & \textrm{otherwise.}
\end{cases}
$$
\end{theorem}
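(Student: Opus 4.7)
The proof plan is to extend the strategy used for the non-degenerate Theorem \ref{thm:main} to the degenerate setting by systematically replacing the Conley--Zehnder index with its lower semicontinuous extension $\mu_-$, along the lines of the degenerate case treatment in \cite[Sect.\ 4.1.2]{GG:convex} for the standard contact sphere $S^{2n+1}$.

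First, one must check that the Floer-theoretic machinery applies. Part (i) of hypothesis \ref{cond:F} combined with exactness of the filling $W$ ensures that the contractible positive $S^1$-equivariant symplectic homology $\SH^{S^1,+}_{\mathit{contr}}(W)$ is defined and $\Z$-graded even when $\alpha$ is degenerate; the exactness is what removes the obstruction that normally forces a non-degeneracy assumption on $\alpha$. The structural computation underlying the proof of Theorem \ref{thm:main} (a variant of Proposition \ref{prop:CH}) then identifies this homology with an infinite direct sum of shifted copies of $H_\ast(B;\Q)$, concentrated on a specific set of integer degrees determined by the prequantization data, the parity of the degrees at which $H_\ast(B;\Q)$ is nonzero, and the minimal Chern number $c_B$.

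Next, after a $C^2$-small perturbation of $\alpha$ whose closed orbits remain isolated (but are generally still degenerate), each iterate $\gamma^k$ of a simple closed orbit $\gamma$ contributes local $S^1$-equivariant Floer homology supported in the integer interval $[\mu_-(\gamma^k),\mu_+(\gamma^k)]$ of length at most $2n$, and the classical iteration inequalities control how iterates of a single $\gamma$ spread across degrees through the approximation $|\mu_-(\gamma^k)-k\hat\mu(\gamma)|\leq n$. The rank of this local homology in any one degree is bounded by a constant depending only on $n$. The hypothesis $\mu_-(\gamma^k)\geq q$ for every contractible iterated orbit then confines all chain-level generators to degrees $\geq q$, so that only orbits with $\mu_-$ close to $q$ can contribute to the lowest-degree parts of $\SH^{S^1,+}_{\mathit{contr}}(W)$.

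The heart of the argument is then a counting step closely analogous to the one driving the proof of Theorem \ref{thm:main}: one isolates a window of degrees just above $q$, of length of order $n$, within which $\SH^{S^1,+}_{\mathit{contr}}(W)$ supports exactly $r$ distinct nonzero target degrees coming from the first shifted copy of $H_\ast(B;\Q)$, while the iteration and mean-index bounds guarantee that no simple orbit can contribute to more than one target degree in the window. A pigeonhole argument then forces at least $r$ geometrically distinct simple orbits. The hard part will be precisely this parity bookkeeping, which is responsible for the $+1$ discrepancy between the two cases of the formula for $r$: when $n$ is even and $q$ is odd, the nonzero degrees of $\SH^{S^1,+}_{\mathit{contr}}(W)$ meeting the window start one step higher than in the other cases, which costs one target degree and hence one detectable simple orbit. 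Carrying out this bookkeeping carefully, while simultaneously controlling degenerate orbits whose local Floer homology may spread across several consecutive degrees and whose parity behaviour departs from the good/bad dichotomy available in the non-degenerate setting, is where the technical work is concentrated.
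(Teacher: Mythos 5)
Your proposal takes a genuinely different route from the paper, and unfortunately the route you sketch has a gap in the degenerate setting. The paper's proof is a direct appeal to the Lusternik--Schnirelmann machinery for the shift operator on $\SH^{S^1,+}$ developed in \cite{GG:convex} (specifically Thm.\ 6.9 and 6.15 there), together with a degenerate variant of the index recurrence theorem. The mechanism is not a Morse-inequality count of generators in a degree window: it is the observation that the shift operator produces a sequence of nonzero homology classes, each with a spectral (action) value, and the iteration inequalities for $\mu_\pm$ then force these spectral values to be carried by distinct simple orbits once enough degree steps have been taken. The lower bound $r$ comes out of this degree-versus-index arithmetic, not out of a rank computation.

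Two concrete problems with your plan. First, you misidentify the role of exactness of $W$: the homology $\SH^{S^1,+}$ is already defined and $\Z$-graded for any strong aspherical filling satisfying Part (i) of \ref{cond:F}, degenerate $\alpha$ or not. The exactness hypothesis is needed so that the Hamiltonian action filtration used to define spectral invariants coincides with the contact action; without this, the Lusternik--Schnirelmann argument has no meaningful action selectors. Second, the pigeonhole step you describe does not go through as stated: in the degenerate case the local $S^1$-equivariant homology of a single iterate $\gamma^k$ can be nonzero in up to $2n+1$ consecutive degrees, and successive iterates of the \emph{same} simple orbit can fill an entire window of degrees of length $O(n)$. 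Hence a single simple orbit \emph{can} contribute to many target degrees in the window you isolate, and ``no simple orbit contributes to more than one target degree'' is false. This is exactly the obstruction the Lusternik--Schnirelmann argument is designed to circumvent (by controlling action values, not just degrees). Your remark that this ``is where the technical work is concentrated'' is accurate, but the work needed is not parity bookkeeping inside your scheme---it requires switching to the shift-operator/spectral-invariant framework.
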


Here the result is void if $r\leq 0$. The main class of manifolds this
theorem applies to is again the unit cotangent bundles of CROSS's. For
$S^*S^m$ (already considered in \cite{GG:convex}) and $S^*\RP^m$ the
theorem yields, depending on $q$, the existence of a number of
geometrically distinct periodic orbits and of two such orbits for
$S^*\HP^1$ when $q=3$. The main limitation comes from the fact that
$q$ cannot be larger than the minimal degree $d$ where the relevant
symplectic homology for contractible orbits is non-trivial. For
$S^*S^m$ and $S^*\RP^m$ (with $m>2$), we have $d= m-1$; for
$S^*\CP^m$, $S^*\HP^m$ and $S^*\CaP^2$, we have $d=1$, $3$ and,
respectively, $7$; see \cite{AM:dynconvex}.  Most likely, Theorem
\ref{thm:deg}, in contrast with Theorem \ref{thm:main}, is very far
from being sharp. In fact, one can expect that a degenerate form
necessarily has infinitely many simple closed Reeb orbits and, in
particular, Theorem \ref{thm:main} holds without any non-degeneracy
assumptions.

The proof of Theorem \ref{thm:deg} uses Lusternik--Schnirelmann theory
for the shift operator in equivariant symplectic homology developed in
\cite{GG:convex} and a variant of the index recurrence theorem for a
degenerate paths from \cite[Sect.\ 5]{GG:convex} or the common jump
theorem from \cite{Lon02,LZ}. The argument is essentially identical to
the proofs of \cite[Thm.\ 6.9 and Thm.\ 6.15]{GG:convex} and we omit
it. The requirement that $W$ is exact is needed to ensure that the
Hamiltonian action filtration of the symplectic homology agrees with
the contact action.

\section{Preliminaries}

In this section we will review some basic concepts from the
Conley-Zehnder index theory and equivariant symplectic homology used
throughout the paper.

\subsection{The Conley--Zehnder index for paths of symplectic matrices}
\label{sec:index_paths}
To every continuous path $\Phi\colon [0,\,1]\to\Sp(2n)$ beginning at
$\Phi(0)=I$, one can associate the mean index $\hmu(\Phi)\in \R$, a
homotopy invariant of the path with fixed end-points.  The mean index
$\hmu(\Phi)$ measures the total rotation angle of certain unit
eigenvalues of $\Phi(t)$ and $\hmu(\Phi_s)=\const$ for a family of
paths $\Phi_s$ as long as the eigenvalues of $\Phi_s(1)$ remain
constant. The resulting map $\hmu\colon \widetilde{\Sp}(2n)\to\R$ is a
unique quasimorphism on the universal covering $\TSp(2n)$ of $\Sp(2n)$
which is continuous and homogeneous, i.e.,
$$
\hmu(\Phi^k)=k\hmu(\Phi),
$$
and satisfies the normalization condition
$$
\hmu(\Phi_0)=2\quad \textrm{for}\quad \Phi_0(t)= \exp\big(2\pi \sqrt{-1}
t\big)\oplus I_{2n-2}
$$ 
with $t\in [0,\,1]$; see \cite{BG}. The quasimorphism condition
asserts that $\hmu$ fails to be a homomorphism only up to a constant,
i.e.,
\begin{equation}
\label{eq:qm}
\big|\hmu(\Phi\Psi)-\hmu(\Phi)-\hmu(\Psi)\big|\leq C_n,
\end{equation} 
where the constant is independent of $\Phi$ and $\Psi$, but may depend
on $n$. (In fact, one may be able to take $C_n=4n$; \cite{Us}.)  We
refer the reader to \cite{Lon02,SZ} for a very detailed discussion of
the mean index. In this paper we use conventions and notation from
\cite[Sec.\ 4]{GG:convex}.

Assume next that the path $\Phi$ is non-degenerate, i.e., by
definition, all eigenvalues of the end-point $A=\Phi(1)$ are different
from one. We denote the set of such matrices $A\in\Sp(2n)$ by
$\Sp^*(2n)$ and also denote the part of $\TSp(2n)$ lying over
$\Sp^*(2n)$ by $\TSpn(2n)$. It is not hard to see that $A$ can be
connected to a symplectic transformation with elliptic part equal to
$-I$ (if non-trivial) by a path $\Psi$ lying entirely in
$\Sp^*(2n)$. Concatenating this path with $\Phi$, we obtain a new path
$\Phi'$. By definition, the Conley--Zehnder index $\mu(\Phi)\in\Z$ of
$\Phi$ is $\hmu(\Phi')$. One can show that $\mu(\Phi)$ is
well-defined, i.e., independent of $\Psi$. The function
$\mu\colon \TSpn(2n)\to\Z$ is locally constant, i.e., constant on
connected components of $\TSpn(2n)$. In other words,
$\mu(\Phi_s)=\const$ for a family of paths $\Phi_s$ as long as
$\Phi_s(1)\in\Sp^*(2n)$ for every $s$.  Furthermore, we call $\Phi$
strongly non-degenerate if all its ``iterations'' $\Phi^k$ are
non-degenerate, i.e., none of the eigenvalues of $\Phi(1)$ is a root
of unity.

In the rest of this section we briefly discuss the properties of the
Conley--Zehnder type indices which are essential for our purposes,
referring the reader to, e.g., \cite{Lon02,SZ} for the proofs. Below
all paths are required to begin at $I$ and are taken up to homotopy,
i.e., as elements of $\TSp(2n)$.

We start with three specific examples. For the path
$\Phi(t)=\exp\big(2\pi\sqrt{-1}\lambda t\big)$, $t\in [0,\,1]$, we
have
$$
\hmu(\Phi)=2\lambda\textrm{ and } 
\mu(\Phi)=\sign(\lambda)\big(2\lfloor|\lambda|\rfloor +1\big)
\textrm{ when $\lambda\not\in\Z$. }
$$
Next, let $H$ be a non-degenerate quadratic form on $\R^{2n}$ with
eigenvalues in the range $(-\pi,\,\pi)$. (The eigenvalues of a
quadratic form $H$ on a symplectic vector space are by definition the
eigenvalues of its Hamiltonian vector field $X_H=J \nabla H$, where
$J$ is the matrix of the symplectic form.) The path
$\Phi(t)=\exp(JHt)$, $t\in [0,\,1]$, is the linear autonomous
Hamiltonian flow generated by $H$. Then, with our conventions,
$$
\mu(\Phi)=\frac{1}{2}\sgn(H),
$$
where $\sgn(H)$ is the signature of $H$, i.e., the number of positive
squares minus the number of negative squares in the diagonal form of
$H$ with $\pm 1$ and $0$ on the diagonal. In addition, when $\Phi(1)$
is hyperbolic, we have
$$
\mu(\Phi)=\hmu(\Phi).
$$
Furthermore,
$$
\mu(\Phi^{-1})=-\mu(\Phi)
$$
for any non-degenerate path $\Phi$. When $\varphi$ is a loop, we also
have
$$
\mu(\varphi\Phi)=\hmu(\varphi)+\mu(\Phi).
$$
Finally, $\hmu$ and $\mu$ are additive under direct sum. Namely, for
$\Phi\in\TSp(2n)$ and $\Psi\in\TSp(2n')$, we have
$$
\hmu(\Phi\oplus\Psi)=\hmu(\Phi)+\hmu(\Psi)
\quad\textrm{and}\quad
\mu(\Phi\oplus\Psi)=\mu(\Phi)+\mu(\Psi),
$$
where in the second identity we assumed that both paths are
non-degenerate. The mean index and the Conley--Zehnder index are
related by the inequality
$$
|\hmu(\Phi)-\mu(\Phi)|<n
$$
where $\Phi\in\TSp^*(2n)$. As a consequence,
$$
\lim_{k\to\infty}\frac{\mu(\Phi^k)}{k}=\hmu(\Phi),
$$
and hence the name ``mean index'' for $\hmu$.

\subsection{The Conley--Zehnder index of periodic orbits}
\label{sec:index_orbit}

Let $\ga$ be a strongly non-degenerate periodic orbit of the Reeb
vector field $R_\alpha$ and
$\Psi\colon \ga^*\xi \to S^1 \times \R^{2n}$ a symplectic
trivialization of $\xi$ over $\ga$. Denote by
$\Psi_t\colon \xi(\ga(t)) \to \R^{2n}$ the composition of
$\Psi|_{\ga^*\xi(t)}$ with the projection onto the second factor. Via
this trivialization, the linearized Reeb flow gives rise to the
symplectic path
\[
  \Phi(t) = \Psi_t \circ d\phi_\alpha^t(\gamma(0))|_\xi \circ
  \Psi_0^{-1},
\]
where $\phi^t_\alpha$ is the Reeb flow of $\alpha$. In this way, we
define the Conley--Zehnder index and the mean index of $\gamma$ with
respect to the trivialization $\Psi$ as
\[
\cz(\ga;\Psi) = \cz(\Phi)\text{ and }\mi(\ga;\Psi) = \mi(\Phi)
\]
respectively. The Conley--Zehnder index and the mean index depend only
on the homotopy class of $\Psi$. Indeed, if we choose another
trivialization $\Upsilon\colon \ga^*\xi \to S^1 \times \R^{2n}$ then
we have the relation
\[
  \cz(\ga;\Upsilon) = \cz(\ga;\Phi) + 2\maslov(\Upsilon_t \circ
  \Phi_t^{-1}),
\]
where $\maslov$ denotes the Maslov index which is a suitably chosen
one of the two isomorphisms between $\pi_1(\Sp(2n))$ and $\Z$.  In
particular, the parity of the index does not depend on the choice of
the trivialization. It turns out that the parities of the
Conley--Zehnder indices of the even/odd iterates of a periodic orbit
are the same, i.e., for all $j,\,k \in \N$,
\[
  \cz(\ga^{2j};\Psi^{2j}) \equiv \cz(\ga^{2k};\Psi^{2k})\ \text{and}\
  \cz(\ga^{2j-1};\Psi^{2j-1}) \equiv \cz(\ga^{2k-1};\Psi^{2k-1})\
\pmod{2}. 
\]

A periodic orbit of $\alpha$ is called good if its Conley--Zehnder
index has the same parity as that of the index of the underlying
simple closed orbit. (As has just been pointed out, the parity of the
index does not depend on the choice of the trivialization of $\xi$.) A
periodic orbit that is not good is called bad.

If $\ga$ is contractible, there is a standard way to choose the
trivialization $\Psi$ unique up to homotopy. Namely, consider a
capping disk of $\ga$, i.e., a smooth map $\sigma\colon D^2 \to M$,
where $D^2$ is the two-dimensional disk, such that
$\sigma|_{\partial D^2} = \ga$. Choose a trivialization of
$\sigma^*\xi$ and let $\Psi\colon \ga^*\xi \to S^1 \times \R^{2n}$ be
its restriction to the boundary, which gives a trivialization of $\xi$
over $\ga$. Since $D^2$ is contractible, the homotopy class of $\Psi$
does not depend on the choice of the trivialization of
$\sigma^*\xi$. Moreover, the condition that $c_1(\xi)|_{\pi_2(M)}=0$
ensures that the homotopy class of $\Psi$ does not depend on the
choice of $\sigma$ as well. Throughout the paper, whenever $\ga$ is
contractible, we denote by $\mu(\ga)$ and $\mi(\ga)$ the
Conley--Zehnder index and, respectively, the mean index of $\ga$ with
respect to the standard trivialization.

\subsection{Equivariant symplectic homology}
\label{sec:esh}
In this section we briefly recall several facts about positive
equivariant symplectic homology, treating the subject from a slightly
unconventional perspective.

Let first $(M,\xi)$ be a closed contact manifold and $(W,\Omega)$ be a
strong symplectic filling of $M$ with
$\Om|_{\pi_2(W)}=0=c_1(TW)|_{\pi_2(W)}$. Furthermore, let $\alpha$ be
a non-degenerate contact form on $M$ supporting the contact structure
$\xi$.  Then the positive equivariant symplectic homology
$\SH^{S^1,+}(W)$ with coefficients in $\Q$ is the homology of a
complex $\CC_*(\alpha)$ generated by the good closed Reeb orbits of
$\alpha$; see \cite[Prop.\ 3.3]{GG:convex}. This complex is graded by
the Conley--Zehnder index and filtered by the action. Furthermore,
once we fix a free homotopy class of loops in $W$, the part of
$\CC_*(\alpha)$ generated by closed Reeb orbits in that class is a
subcomplex. As a consequence, the entire complex $\CC_*(\alpha)$
breaks down into a direct sum of such subcomplexes indexed by free
homotopy classes of loops in $W$.

The differential in the complex $\CC_*(\alpha)$, but not its homology,
depends on several auxiliary choices, and the nature of the
differential is not essential for our purposes. The complex
$\CC_*(\alpha)$ is functorial in $\alpha$ in the sense that a
symplectic cobordism equipped with a suitable extra structure gives
rise to a map of complexes. For the sake of brevity and to emphasize
the obvious analogy with contact homology, we denote the homology of
$\CC_*(\alpha)$ by $\HC_*(M)$ rather than $\SH^{S^1,+}(W)$. The
homology of the subcomplex formed by the orbits contractible in $W$
will be denoted by $\HC_*^0(M)$. However, it is worth keeping in mind
that $\CC_*(\alpha)$ and hypothetically even the homology may depend
on the choice of the filling $W$.

This description of the positive equivariant symplectic homology as
the homology of $\CC_*(\alpha)$ is not quite standard, but it is most
suitable for our purposes. (We refer the reader to \cite{GG:convex}
for more details and further references and to \cite{BO:Gysin,Vi} for
the original construction of the equivariant symplectic homology.) To
see why $\HC_*(M):=\SH^{S^1,+}(W)$ can be obtained as the homology of
a single complex generated by good closed Reeb orbits, let us first
consider an admissible Hamiltonian $H$ on the symplectic completion of
$W$ and focus on the orbits of $H$ with positive action. Such orbits
are in a one-to-one correspondence with closed Reeb orbits $\gamma$
with action below a certain threshold $T$ depending on the slope of
$H$. The $S^1$-equivariant Floer homology of $H$ is the homology of a
Floer-type complex obtained from a non-degenerate parametrized
perturbation of $H$; \cite{BO:Gysin,Vi}. This complex is filtered by
the action. The $E^1$-term of the resulting spectral sequence (over
$\Q$) is generated by the good Reeb orbits of $\alpha$ with action
below $T$. Now we can (canonically, once the generators are fixed)
reassemble the differentials $\p_r$ into a single differential $\p$ on
$\CC_*(H):=E^1_{*,*}$ in such a way the the homology of the resulting
complex is $E^\infty=\HF_*^{S^1,+}(H)$. Roughly speaking,
$\p=\p_1+\p_2+\ldots$, where $\p_r$ is suitably ``extended'' from
$E^r$ to $E^1$. Moreover, this procedure respects the action
filtration and is functorial with respect to continuation maps.
Passing to the limit in $H$, we obtain the complex $\CC_*(\alpha)$ as
the limit of the complexes $\CC_*(H)$; see \cite[Sect.\ 2.5 and
3]{GG:convex} for further details.

A remarkable observation by Bourgeois and Oancea in \cite[Sect.\
4.1.2]{BO:12} is that under suitable additional assumptions on the
indices of closed Reeb orbits the positive equivariant symplectic
homology is defined and well-defined even when $M$ does not have a
symplectic filling. To be more precise, assume that
$c_1(\xi)|_{\pi_2(M)}=0$ and let $\alpha$ be a non-degenerate contact
form on $M$ such that all of its closed contractible Reeb orbits have
Conley--Zehnder index strictly greater than $3-n$. Furthermore, under
this assumption the proof of \cite[Prop.\ 3.3]{GG:convex} carries over
essentially word-for-word, and hence again the positive equivariant
symplectic homology of $M$ can be described as the homology of a
complex $\CC_*(\alpha)$ generated by good closed Reeb orbits of
$\alpha$, graded by the Conley--Zehnder index and filtered by the
action. The complex breaks down into the direct sum of subcomplexes
indexed by free homotopy classes of loops in $M$. As in the fillable
case, we will use the notation $\HC_*(M)$ and $\HC_*^0(M)$.

The assumption that all contractible orbits have index greater than
$3-n$ is equivalent to that $\alpha$ is simultaneously index-positive
and index-admissible (assuming that there is no contractible closed
orbit with zero mean index), which are parts of the requirement
\ref{cond:NF}. Indeed, index positivity implies that all contractible
orbits have index greater than $-n$ and the condition that $\alpha$ is
index-admissible rules out the orbits of index $1-n$, $2-n$ and
$3-n$. (The converse is obvious if there is no contractible periodic
orbit with zero mean index.)  Hence in case \ref{cond:NF} of Theorem
\ref{thm:main} the positive equivariant symplectic homology of $M$ is
defined and well-defined without a filling of $M$.

\subsection{Equivariant symplectic homology of prequantizations}
\label{sec:esh_preq}

The next proposition shows how to compute the equivariant symplectic
homology of a suitable prequantization in terms of the homology of the
basis. This computation will be crucial throughout this work.

\begin{proposition}
\label{prop:CH}
Let $(M^{2n+1},\xi)$ be a prequantization of a closed symplectic
manifold $(B,\om)$ with $\omega|_{\pi_2(B)} \neq 0$ and  such that
$H_{k}(B;\Q)=0$ for every odd $k$ or $c_B>n$. 
\begin{itemize}
\item[{\rm (a)}] Assume that $M$ satisfies the requirements from Part
  (i) of \ref{cond:F}. Then, $B$ is spherically monotone.  When $B$ is
  spherically positive monotone, the positive equivariant symplectic
  homology for contractible periodic orbits of $M$ is given by
\begin{equation}
\label{eq:CH}
\HC^0_\ast(M) \cong \bigoplus_{m\in\N} H_{\ast -2mc_B+n} (B; \Q). 
\end{equation}
When $B$ is spherically negative monotone, we have
\begin{equation}
\label{eq:CH-nm}
\HC^0_\ast(M) \cong \bigoplus_{m\in\N} H_{\ast +2mc_B-n} (B; \Q). 
\end{equation}
In particular, in both cases the homology is independent of the choice
of the filling $W$ satisfying Part (i) of \ref{cond:F}.

\item[{\rm (b)}] Alternatively, assume that $B$ is spherically
  positive monotone with $c_B\geq 2$ and, as in \ref{cond:NF},
  $c_1(\xi)=0$ and $\alpha$ is a non-degenerate contact form on
  $(M,\xi)$ such that all closed Reeb orbits have index greater than
  $3-n$. Then \eqref{eq:CH} also holds.
\end{itemize}
\end{proposition}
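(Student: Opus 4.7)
The plan is to compute $\HC^0_*(M)$ by Morse--Bott techniques applied to the connection $1$-form $\alpha_0$ on the prequantization bundle $\pi\colon M\to B$. The Reeb vector field of $\alpha_0$ generates the $S^1$-action on $M$, so every closed Reeb orbit is a multiple cover of a fiber; the $m$-fold covers sweep out a Morse--Bott family $\Sigma_m\cong B$. I would perturb $\alpha_0$ to a non-degenerate form $\alpha_\epsilon=(1+\epsilon\pi^*f)\alpha_0$ using a Morse function $f$ on $B$; the non-degenerate closed orbits of $\alpha_\epsilon$ near $\Sigma_m$ then correspond bijectively to the critical points of $f$. By the homotopy exact sequence of $S^1\to M\to B$, only the iterates $\gamma^{mk_0}$ for $m\in\N$ are contractible in $M$, where $k_0\in\Z_{>0}$ generates the image of $\langle[\omega],\cdot\rangle\colon\pi_2(B)\to\Z$ (nonzero by hypothesis).

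For part (a), I would first establish spherical monotonicity of $B$. The identification $TW|_M\cong\xi\oplus\C$, the complex line being spanned by the Reeb and Liouville vector fields near the boundary, gives $c_1(TW)|_M=c_1(\xi)=\pi^*c_1(TB)$. Combined with the hypothesis $c_1(TW)|_{\pi_2(W)}=0$ and the fact that the image of $\pi_*\colon\pi_2(M)\to\pi_2(B)$ equals the kernel of $\langle[\omega],\cdot\rangle$, this forces $c_1(TB)$ to vanish on that kernel, so $c_1(TB)=\lambda[\omega]$ on $\pi_2(B)$ for some $\lambda\in\Q$ and $c_B=|\lambda|k_0$. A capping disk of $\gamma^{mk_0}$ projects to a sphere $v\in\pi_2(B)$ with $\langle[\omega],v\rangle=mk_0$; since the linearized Reeb flow of $\alpha_0$ is trivial on $\xi$, the Robbin--Salamon index of $\Sigma_{mk_0}$ in the capped trivialization equals $2\langle c_1(TB),v\rangle=\pm 2mc_B$, with sign $\sgn(\lambda)$. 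The standard Morse--Bott perturbation formula then assigns to the orbit $\gamma_{m,p}$ (associated to $p\in\mathrm{Crit}(f)$ of Morse index $i_p$) the Conley--Zehnder index $\cz(\gamma_{m,p})=2mc_B-n+i_p$ in the positive-monotone case, with the analogous formula giving \eqref{eq:CH-nm} on the negative side.

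The crux is to show that the complex $\CC^0_*(\alpha_\epsilon)$ splits, via the filtration by $m$, into blocks each computing $H_*(B;\Q)$ with the above shift; equivalently, no Floer differential connects an orbit in $\Sigma_{mk_0}$ to one in $\Sigma_{m'k_0}$ with $m\neq m'$. Under the hypothesis $c_B>n$ the index windows $[2mc_B-n,2mc_B+n]$ for distinct $m$ are pairwise separated by at least $2$, so a degree-$(-1)$ differential cannot cross families. Under the alternative hypothesis $H_k(B;\Q)=0$ for odd $k$, I would choose $f$ to have only even-index critical points; then every generator has the same parity (since $2mc_B$ is even), and every degree-$(-1)$ differential vanishes by parity. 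In either case the Morse--Bott spectral sequence collapses, and the surviving Morse complex of $(B,f)$ on each level yields \eqref{eq:CH}.

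Part (b) follows from the same computation once one invokes the Bourgeois--Oancea description recalled in Section \ref{sec:esh}: when $c_1(\xi)=0$ and every contractible closed Reeb orbit has CZ index $>3-n$, the complex $\CC_*(\alpha)$ is defined without appealing to a filling. For the perturbed form $\alpha_\epsilon$ the minimal possible CZ index is $2c_B-n\geq 4-n>3-n$ whenever $c_B\geq 2$, so the Bourgeois--Oancea hypotheses are satisfied and the argument of the previous two paragraphs applies verbatim. The hardest point, and the place where the dichotomy $c_B>n$ or $H_{\mathrm{odd}}(B;\Q)=0$ is genuinely needed, is to check that the equivariant reassembly of the Floer differentials into a single operator $\partial=\partial_1+\partial_2+\cdots$ (Section \ref{sec:esh}) respects the Morse--Bott filtration, so that no higher $\partial_r$ introduces a cross-family contribution. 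Both the index-separation and parity arguments above are stable under this reassembly, which is what ultimately makes the computation go through.
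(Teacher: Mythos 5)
Your overall approach --- perturbing the connection form $\alpha_0$ to a non-degenerate form, computing the Conley--Zehnder indices of the orbits in each level $\Sigma_m$, and showing the resulting action/Morse--Bott spectral sequence collapses --- is the same as the paper's, and your direct argument for spherical monotonicity via $c_1(\xi)=\pi^*c_1(TB)$ being aspherical is a valid rephrasing of the paper's proof by contradiction. There are, however, two gaps.

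In the case $H_{\mathrm{odd}}(B;\Q)=0$, you propose to choose a Morse function $f$ with only even-index critical points so that all generators of the perturbed complex have a single parity. This is not always achievable: whenever $\pi_1(B)\neq 0$ (for instance, when $H_1(B;\Z)$ is a non-trivial torsion group, which is compatible with $H_1(B;\Q)=0$), every Morse function on $B$ must have index-one critical points. The fix is to work with the $E^1$ page of the Morse--Bott spectral sequence rather than demanding a chain-level splitting: the $E^1$ page is $\bigoplus_m H_{*-2mc_B+n}(B;\Q)$, and under $H_{\mathrm{odd}}(B;\Q)=0$ it is concentrated in degrees of a single parity (namely $n \bmod 2$), so all higher differentials $d_r$, $r\geq 1$, being of total degree $-1$, vanish automatically. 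No special choice of $f$ is needed.

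The more substantial gap is in part (b). You assert that for the perturbed form $\alpha_\epsilon$ the minimal possible CZ index is $2c_B-n$, and use this to verify the Bourgeois--Oancea hypothesis. But a non-degenerate $C^2$-small perturbation of $\alpha_0$ can have closed Reeb orbits of arbitrarily large action, and for those the Morse--Bott picture of orbits clustering near $\Sigma_m$ gives no control over the index. The paper's Lemma~\ref{lemma:pos_index} supplies exactly this bound, and its proof is genuinely non-trivial: it exploits the triviality of $\bigwedge^n_{\C}\xi$ (from $c_1(\xi)=0$) to define a mean index for arbitrary finite Reeb orbit segments, and then invokes the quasimorphism property \eqref{eq:qm} together with a continuity argument to show that the mean index of every orbit of a sufficiently close perturbation $\beta$ grows linearly in action with a constant uniform in $\beta$. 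Without some such argument, the complex $\CC_*(\alpha_\epsilon)$ is not known to be defined in the absence of a filling, and part (b) remains unproved. (Your closing concern about whether the reassembly $\partial=\partial_1+\partial_2+\cdots$ respects the Morse--Bott filtration is, by contrast, not a genuine issue: the reassembly respects the action filtration by construction, and for $\alpha_\epsilon$ this agrees with the Morse--Bott filtration; the dichotomy $c_B>n$ or $H_{\mathrm{odd}}(B;\Q)=0$ is needed only for the collapse, not for the filtration to be preserved.)
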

In other words, \eqref{eq:CH} asserts that $\HC^0_\ast(M)$ is obtained
by taking an infinite number of copies of $H_{\ast-n}(B;\Q)$ with
grading shifted up by positive integer multiples of $2c_B$ and adding
up the resulting spaces.

\begin{remark}
\label{rmk:monotone}
Note that while the only known spherically positive monotone manifold
meeting the requirements $c_B>n$ is $\CP^n$, there are numerous
negative monotone manifolds satisfying this condition, e.g., complete
intersections of high degree. Also recall that in (b), we necessarily
have $c_1(TB)=\lambda [\om]$ in $H^2(B;\Q)$ for some $\lambda\in\R$,
i.e., the symplectic manifold $(B,\omega)$ is positive or negative
monotone in a very strong sense. (Then $\lambda\geq 0$ since $B$ is
also spherically positive monotone.) This follows from the condition
that $c_1(\xi)=0$ and the Gysin exact sequence.
\end{remark}

It is worth pointing out that in Case (a) of Proposition \ref{prop:CH}
the conditions, although quite restrictive, are purely of topological
nature and ultimately imposed only on the symplectic manifold
$(B,\omega)$. The homology in this case is defined for any contact
form and given by \eqref{eq:CH} or \eqref{eq:CH-nm}. On the other
hand, in Case (b) the conditions are imposed on both the manifold and
the contact form $\alpha$ and the homology is defined and satisfies
\eqref{eq:CH} only when $\alpha$ meets those requirements. Finally,
note that the requirement that $c_B\geq 2$ from (b) is automatically
satisfied in the setting of Theorem \ref{thm:main} as a consequence of
the assumptions $\omega|_{\pi_2(B)}\neq 0$ and $c_B>n/2$. Indeed, then
$c_B\geq 2$ when $n>1$ and for $n=1$ we necessarily have $B=S^2$ and
hence $c_B=2$.

\begin{proof}[Proof of Proposition \ref{prop:CH}]
  Let us focus first on Case (a). To show that $B$ is spherically
  monotone note that $TW|_M$ decomposes as the direct sum of $\xi$ and
  a trivial complex line bundle. Hence $c_1(\xi)$ is the image of
  $c_1(TW)$ in $H^2(M;\Z)$ and, as a consequence, $c_1(\xi)$ is an
  aspherical class. Next, arguing by contradiction, assume that $B$ is
  not spherically monotone. Then $c_1(TB)$ and $[\omega]$ are linearly
  independent as maps from $\pi_2(B)\otimes \Q$ to $\Q$. Therefore, as
  is easy to see, there exists $S\in\pi_2(M)$ such that
  $\left<c_1(TB), S\right>>0$ but $\left<[\om], S\right>=0$. The
  restriction of the prequantization bundle to $S$ is trivial and $S$
  admits a lift $S'$ to $M$. Then, since $c_1(\xi)$ is the pull-back
  of $c_1(TB)$, we have
$$
\left<c_1(\xi), S'\right>=\left<c_1(TB), S\right>>0.
$$ 
This is impossible because $c_1(\xi)$ is aspherical.

For the sake of simplicity we will assume throughout the rest of the
proof of Case (a) that $B$ is positive monotone. (When $B$ is negative
monotone, the argument is similar up to some sign changes.) Then, as
has been pointed out above, the positive equivariant symplectic
homology is defined and well-defined for any contact form supporting
$\xi$. Let $\alpha_0$ be a connection contact form on $(M,\xi)$. This
form is not non-degenerate, but rather Morse-Bott non-degenerate. Let
$a>0$ be the rationality constant of $(B,\om)$, i.e., the positive
generator of $\left<\om,\pi_2(B)\right>\subset \R$. Then the action
spectrum of $\alpha_0$ is $a\N$. Pick small non-overlapping intervals
$I_m=[ma-\ep,ma+\ep]$ with $\ep>0$.

A standard Morse--Bott type argument shows that  
$$
\HC^{I_m,0}_\ast(\alpha_0) \cong H_{\ast -2mc_B+n} (B; \Q),
$$
where on the left we have the filtered homology of $\alpha_0$ or to be
more precise of a small non-degenerate perturbation $\alpha$ of
$\alpha_0$; cf., e.g., \cite{Poz} and also \cite{Bo:thesis, BO:09} for
a different approach. Furthermore, the contractible positive
equivariant symplectic homology of $\alpha$ can be viewed as the
homology of a certain complex generated by good closed Reeb orbits;
see Section \ref{sec:esh}. This complex is filtered by action, and the
$E^1$-page of the resulting Morse--Bott spectral sequence is given by
the right-hand side of \eqref{eq:CH}. Namely,
$$
E^1_{m,q}= \HC^{I_m,0}_{m+q}(\alpha_0) \cong H_{m+q-2mc_B+n} (B; \Q).
$$
The condition that $H_{\mathit{odd}}(B;\Q)=0$ or $c_B>n$ readily
implies that this spectral sequence collapses in the $E^1$-term:
$E^1=E^\infty=\HC^0_\ast(M)$, which proves~\eqref{eq:CH}.

This argument applies in Case (b) word-for-word with one
nuance. Namely, to carry out the Morse--Bott calculation for
$\alpha_0$ we need to make sure that it admits an arbitrarily small
non-degenerate perturbation $\alpha$ such that all good closed Reeb
orbits of $\alpha$ have Conley--Zehnder index greater than $3-n$. This
is a consequence of the following lemma.

\begin{lemma}
\label{lemma:pos_index}
Let $(M,\xi)$ be the prequantization $S^1$-bundle over $(B,\omega)$
with connection contact form $\alpha_0$ such that 
$\xi=\ker\alpha_0$. Assume that $B$ is spherically positive monotone,
$\omega|_{\pi_2(B)}\neq 0$, and $c_1(\xi)=0$.  Let $\beta$ be a
sufficiently small non-degenerate perturbation of $\alpha_0$.  Then
$\mu(\gamma)\geq 2c_B-n$ for every contractible closed Reeb orbit of
$\beta$. Furthermore, there exists a constant $\Delta>0$, independent
of $\beta$, such that for all $\gamma$ we have
$$
\hmu(\gamma)\geq \Delta \cdot T(\gamma),
$$
where $T(\gamma)$ is the period (i.e., the action) of $\gamma$.
\end{lemma}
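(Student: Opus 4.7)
The plan rests on two observations. First, $c_1(\xi)=0$ combined with the Gysin sequence for $\pi\colon M\to B$ and spherical positive monotonicity of $B$ imply $c_1(TB)=c_B[\omega]$ on $\pi_2(B)$. Second, the connection form $\alpha_0$ provides a canonical symplectic isomorphism $\xi\cong\pi^*TB$, and since $R_{\alpha_0}$ is vertical, its linearized flow on $\xi$ is the identity in this trivialization.

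I would first establish the formula $\hmu_{\alpha_0}(\gamma_0^k) = 2c_B k$ for the $k$-th iterate of a simple fiber $\gamma_0$ of period $T_0$. In the $\pi^*TB$-trivialization the linearized $\alpha_0$-flow is identically the identity, so $\hmu$ in this trivialization is zero; passing to the capping disk trivialization shifts this by twice the Maslov class of the comparison loop, which equals the Chern number $\langle c_1(TB),[\bar\sigma]\rangle = c_B\cdot(\int_{\bar\sigma}\omega)/T_0 = c_B k$ of the projected disk $\bar\sigma=\pi\circ\sigma$. Thus $\hmu_{\alpha_0}(\gamma_0^k) = 2c_B k$ and $\hmu_{\alpha_0}(\gamma_0^k)/T(\gamma_0^k) = 2c_B/T_0$.

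Next, for a closed orbit $\gamma$ of the perturbed form $\beta$ of period $T=T(\gamma)$, the same comparison of trivializations yields
$$
\hmu(\gamma) = 2c_B N(\gamma) + R(\gamma),
$$
where $N(\gamma) := \int_\gamma \alpha_0 / T_0$ is the vertical winding number and $R(\gamma)$ is the mean index of the linearized $R_\beta$-flow in the $\pi^*TB$-trivialization. Since $\beta$ is $C^2$-close to $\alpha_0$, $R_\beta - R_{\alpha_0}$ has $C^1$-norm of order $\|\beta - \alpha_0\|_{C^2}$, so the linearized flow in the $\pi^*TB$-trivialization drifts from the identity at rate $O(\|\beta - \alpha_0\|_{C^2})$. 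The quasimorphism bound \eqref{eq:qm} on $\hmu$ yields $|R(\gamma)| \leq C\|\beta - \alpha_0\|_{C^2}\, T(\gamma)$. Since $T = \int_\gamma \beta$ differs from $\int_\gamma \alpha_0 = N(\gamma) T_0$ by $O(\|\beta - \alpha_0\|_{C^0})\,T$, one has $N(\gamma) \geq (T/T_0)\cdot\bigl(1 - O(\|\beta - \alpha_0\|)\bigr)$, and hence
$$
\hmu(\gamma) \geq \bigl(2c_B/T_0 - C'\|\beta - \alpha_0\|_{C^2}\bigr)\, T(\gamma),
$$
which proves the mean index bound with $\Delta := c_B/T_0$ once $\|\beta-\alpha_0\|_{C^2}$ is sufficiently small.

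For the Conley--Zehnder bound $\mu(\gamma) \geq 2c_B - n$, I use $\mu \geq \hmu - n$. For orbits of period above a threshold $T_1$ chosen so that $\Delta T_1 - n > 2c_B - n$, the bound is immediate. For the finitely many orbit families of period $\leq T_1$, a standard Morse--Bott analysis for $C^2$-small perturbations of the connection form (see \cite{Bo:thesis, Poz}) identifies these orbits in bijection with iterates of critical points of an auxiliary Morse function on $B$: the $k$-th iterate associated with a critical point $p$ of Morse index $\operatorname{ind}(p) \in [0, 2n]$ has Conley--Zehnder index $\mu = 2c_B k + n - \operatorname{ind}(p) \in [2c_B k - n,\, 2c_B k + n]$, which for $k \geq 1$ gives $\mu \geq 2c_B - n$. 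The main obstacle is making this Morse--Bott correspondence uniform across all periods $\leq T_1$ while keeping the required size of the perturbation under control; this is classical but technically delicate.
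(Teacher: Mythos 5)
Your overall strategy --- peel off the ``vertical'' contribution $2c_B N(\gamma)$ and bound the residue $R(\gamma)$ --- is in the same spirit as the paper's proof, but the key step is flawed: the quasimorphism property does not give the estimate $|R(\gamma)| \leq C\,\|\beta-\alpha_0\|_{C^2}\,T(\gamma)$. The quasimorphism bound \eqref{eq:qm} carries a fixed defect $C_n$, \emph{independent of how close the path stays to the identity}. If you cut the linearized $\beta$-flow (in the $\pi^*TB$-trivialization) over time $T$ into $\lfloor T \rfloor$ unit pieces, each piece indeed has mean index $O(\|\beta-\alpha_0\|_{C^2})$, but repeated application of \eqref{eq:qm} only yields
\[
|R(\gamma)| \;\le\; T\cdot O\bigl(\|\beta-\alpha_0\|_{C^2}\bigr) + (T-1)\,C_n,
\]
and the $T\,C_n$ term survives as $\|\beta-\alpha_0\|\to 0$. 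So the conclusion $\hmu(\gamma)\ge\Delta\,T(\gamma)$ does not follow from this chain as written. (A Lipschitz-type estimate for $\hmu$ along paths whose generator is uniformly small may well be true, but it is a genuinely different statement from the quasimorphism inequality and would need its own proof.)

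The paper sidesteps this precisely by choosing the cutting length in terms of $C_n$: it picks $m$ with $2c_B m > C_n$, so that each segment of the $\alpha_0$-Reeb flow of action $ma$ has mean index $2c_B m$, and by $C^2$-continuity each segment of the $\beta$-Reeb flow of the same action has mean index $> b_0 > C_n$. The quasimorphism property is then used to deduce a \emph{positive linear lower bound} $\hmu(\eta) \ge b\,T(\eta) - c$ with $b = (b_0 - C_n)/(ma) > 0$: the fixed defect $C_n$ is absorbed because the per-segment index exceeds it, rather than being required to be small. That is the missing idea. Two smaller points: for the Conley--Zehnder bound on short orbits, the paper needs only continuity of $\hmu$ near $\alpha_0$ together with $|\mu-\hmu|<n$, whereas you invoke a full Morse--Bott bijection with critical points of an auxiliary Morse function on $B$; this is more machinery than necessary and, as you note yourself, technically delicate to make uniform. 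Finally, the paper derives $c_1(TB)=\lambda[\omega]$ only in $H^2(B;\Q)$ from $c_1(\xi)=0$ via Gysin, without fixing $\lambda=c_B$; the normalization $\lambda=c_B$ on $\pi_2(B)$ you use is a harmless reformulation but should be stated with care since $\omega$ need not be normalized.
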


Note that $(B,\omega)$ is spherically positive monotone and
$c_1(\xi)=0$ whenever, for instance, $(B,\omega)$ is positive monotone
over $\Z$, i.e., $c_1(TB)=\lambda [\om]$ in $H^2(B;\Z)$ for some
integer $\lambda\geq 0$, where abusing notation we treat $[\om]$
as its lift to $H^2(B;\Z)$; cf.\ Remarks \ref{rmk:lift} and
\ref{rmk:monotone}.  Lemma \ref{lemma:pos_index} is the main point in
the proof of Theorem \ref{thm:main} where it is essential that in \ref{cond:NF}
$c_1(\xi)=0$ as an element of $H^2(M;\Z)$ and not only modulo torsion.
The lemma is not entirely new and has several predecessors (see, e.g.,
\cite[Sect.\ 2.2]{Bo:thesis} or \cite[Sect.\ 3]{GG:MW} and references
therein). However, we include a short detailed proof for the sake of
completeness and because we think the argument is a good illustration
of usefulness of the quasimorphism property of the mean index.

\begin{proof}
  Contractible closed Reeb orbits of $\alpha_0$ comprise connected
  sets $P_m$ each of which is a principal $S^1$-bundle over $B$. The
  set $P_1$ is formed by the orbits with period $a$, where as above
  $a$ is the positive generator of
  $\left<\om,\pi_2(B)\right>\subset \R$. These orbits are not
  necessarily simple but they are ``simple contractible orbits''. The
  orbits from $P_m$ are the $m$th iterations of the orbits in
  $P_1$. These orbits have mean index $2c_B m$ and period $ma$.

  Fix $T_0>0$. Then, when $\beta$ is sufficiently $C^2$-close to
  $\alpha_0$, every contractible closed Reeb orbit $\gamma$ of $\beta$
  with action $T(\gamma)\leq T_0$ is close to one of the orbits in
  $P_m$ with $ma\leq T_0$. Hence
\begin{equation}
\label{eq:index-lower-bounds}
\mu(\gamma)> 2c_Bm-n\geq 2c_B-n\textrm{ and } \hmu(\gamma)\geq \frac{2c_B}{a'}
\cdot T(\gamma)\geq \frac{c_B}{a} \cdot T(\gamma),
\end{equation}
where we can take $a'>a>0$ arbitrarily close to $a$ when $\beta$ is
close to $\alpha_0$.

Since $c_1(\xi)=0$, the determinant line bundle $\bigwedge^n_{\C}\xi$
is trivial.  Fix a section of this line bundle. Then, using this
section, we can define the mean index for all finite segments $\eta$
of Reeb orbits, not necessarily contractible or even closed, for any
contact form on $(M,\xi)$; see, e.g., \cite{Es}. This index depends
continuously on the initial condition and the contact form (in the
$C^2$-topology), and for closed contractible Reeb orbits it agrees
with the standard mean index defined in Section \ref{sec:index_orbit}.

Pick $m$ such that $2c_B m>C_n$, where $C_n$ is the quasimorphism
constant from \eqref{eq:qm}, and fix $b_0$ with $2c_Bm>b_0>C_n$. By
continuity, when $\beta$ is sufficiently $C^2$-close to $\alpha_0$,
every segment of a Reeb orbit of $\beta$ with action $ma$ has mean
index greater than $b_0$. (This is a consequence of the fact that the
Reeb orbits of $\alpha_0$ with action $m a$ have mean index $2c_B m$.)

Then, by the quasimorphism property
\eqref{eq:qm},
$$
\hmu(\eta)\geq b T(\eta)-c
$$
for all finite segments $\eta$ of Reeb orbits of $\beta$. Here we can
take $b=(b_0-C_n)/m a>0$ and $c$ depends on $m$ and the section, but
can be taken independent of $\beta$. In particular, this inequality
holds for all contractible closed Reeb orbits of $\beta$ and all such
orbits with sufficiently large action have large Conley--Zehnder
index.

Let us now take $\beta$ so close to $\alpha_0$ that
\eqref{eq:index-lower-bounds} holds for all contractible closed Reeb
orbits of $\beta$ with action smaller than a large initial time
$T_0$. To be more specific, fix a positive constant
$\Delta<\min\{b, c_B/a\}$.  Then, as is easy to see, when $T_0$ is
large enough (e.g., $T_0=c_B/(b-\Delta)$), for all contractible closed
Reeb orbits $\gamma$ of $\beta$ we have $\mu(\gamma)\geq 2c_B-n$ and
$\hmu(\gamma)\geq \Delta \cdot T(\gamma)$. This concludes the proof of
the lemma and of the proposition.
\end{proof}\end{proof}

\subsection{Local equivariant symplectic homology, resonance relation
  and Morse inequalities}
\label{sec:lesh}
Let $\ga$ be an isolated closed Reeb orbit and denote by $\HC_*(\ga)$
its local equivariant symplectic homology.  For a non-degenerate orbit
$\ga$, if $\gamma$ is good $\HC_*(\ga)=\Q$, concentrated in degree
$*=\mu(\gamma)$; $\HC_*(\ga)=0$ if $\gamma$ is bad.  The Euler
characteristic of $\ga$ is defined as
\[
\chi(\gamma) = \sum_{m \in \Z} (-1)^m\dim \HC_m(\gamma).
\]
This sum is finite. When $\gamma$ is non-degenerate,
$\chi(\gamma)=(-1)^{\mu(\gamma)}$ or $\chi(\gamma)=0$ depending on
whether $\gamma$ is good or bad.  The local {\it mean} Euler
characteristic of $\gamma$ is
\[
  \hat\chi(\gamma) = \lim_{j\to\infty} \frac{1}{j} \sum_{k=1}^j
  \chi(\gamma^k).
\]
The limit above exists and is rational; see \cite{GGo}. When $\ga$ is
strongly non-degenerate, we have
\begin{equation*}
\label{eq:lmec-nondeg}
\hat\chi(\gamma) =
\begin{cases}
(-1)^{\cz(\ga)}\text{ if }\ga^2\text{ is good} \\
(-1)^{\cz(\ga)}/2\text{ if }\ga^2\text{ is bad}.
\end{cases}
\end{equation*}

Assume now that $\alpha$ is index-positive/index-negative and has
finitely many distinct simple contractible closed orbits
$\ga_1,\dots,\ga_r$.  (Here ``simple" means that each $\ga_i$ is not
an iterate of a contractible orbit.)  This assumption ensures that the
positive/negative mean Euler characteristic
\begin{equation*}
\label{eq:def_MEC}
\chi_\pm(M) := \lim_{j\to\infty} \frac{1}{j} \sum_{m=0}^j (-1)^m b_{\pm m}
\end{equation*}
is well defined, where $b_m := \dim \HC^0_m(M)$ is the $m$-th Betti
number; see \cite{GGo}. The mean Euler characteristic is related to
local equivariant symplectic homology via the resonance relation
\begin{equation}
\label{eq:resonance}
\sum_{i=1}^{r} \frac{\hat\chi(\gamma_i)}{\mi (\gamma_i)} = \chi_\pm(M),
\end{equation}
proved in \cite{GK,HM}. Here the right-hand side is $\chi_+$ when
$\alpha$ is index-positive and $\chi_-$ when $\alpha$ is
index-negative.

Let $c_m := \sum_{i=1}^r \sum_{k=1}^\infty \dim \HC_m(\ga_i^k)$ be the
$m$-th Morse type number and define
$m_{\min} := \inf\{m \in \Z \mid c_{m} \neq 0\}$ and
$m_{\max} := \sup\{m \in \Z \mid c_{m} \neq 0\}$. When $\alpha$ is
non-degenerate, $c_m$ is simply the number of good orbits of index
$m$. Furthermore, $m_{\min}>-\infty$ if $\alpha$ is index-positive and
$m_{\max}<\infty$ if $\alpha$ is index-negative. We have the Morse
inequalities
\begin{equation}
\label{eq:Morse_ineq}
c_m - c_{m-1} + \dots \pm c_{m_{\min}} 
\geq b_m - b_{m-1} + \dots \pm b_{m_{\min}},
\end{equation}
for every $m\geq m_{\min}$ if $\alpha$ is index-positive, and
\begin{equation*}
\label{eq:Morse_ineq-negative}
c_m - c_{m+1} + \dots \pm c_{m_{\max}} 
\geq b_m - b_{m+1} + \dots \pm b_{m_{\max}},
\end{equation*}
for every $m\leq m_{\max}$ if $\alpha$ is index-negative. We note that
these inequalities are notably stronger than the inequalities
$c_m\geq b_m$.

\section{Index recurrence}
\label{sec:IRT}

\subsection{The index recurrence theorem}
A crucial ingredient for distinguishing simple and iterated orbits in
the proof of Theorem \ref{thm:main} is the following combinatorial
result addressing the index behavior under iterations. This result can
be deduced from the so-called enhanced common index jump theorem due
to Duan, Long and Wang \cite{DLW2} (see also \cite{Lon02,LZ}), but we
will give a different, self-contained proof along the lines of the
argument from \cite[Thm.\ 5.1]{GG:convex}.

\begin{theorem}
\label{thm:IRT}
Let $\Phi_1,\ldots,\Phi_r$ be a finite collection of strongly
non-degenerate elements of $\TSp(2n)$ with $\hmu(\Phi_i)>0$ for all
$i$. Then for any $\eta>0$ and any $\ell_0\in\N$, there exist two
integer sequences $d_j^\pm\to\infty$ and two sequences of integer
vectors $\vk^\pm_j=\big(k_{1j}^\pm,\ldots,k_{rj}^\pm\big)$ with all
components going to infinity as $j\to\infty$, such that for all $i$
and $j$, and all $\ell\in\Z$ in the range $1\leq |\ell|\leq \ell_0$,
we have
\begin{enumerate}
\item[\reflb{cond:i}{\rm{(i)}}]
  $\big|\hmu\big(\Phi^{k_{ij}^\pm}_i\big)-d_j^\pm\big|<\eta$ with the
  equality
  $\hmu\big(\Phi^{k_{ij}^\pm}_i\big) =
  \cz\big(\Phi^{k_{ij}^\pm}_i\big)=d_j^\pm$ whenever $\Phi_i(1)$ is
  hyperbolic,
\item[\reflb{cond:ii}{\rm{(ii)}}]
  $\cz\big(\Phi^{k_{ij}^\pm+\ell}_i\big)= d_j^\pm + \cz(\Phi^\ell_i)$,
  and
\item[\reflb{cond:iii}{\rm{(iii)}}]
  $\cz\big(\Phi^{k_{ij}^-}_i\big)-d_j^-=
  -\big(\cz\big(\Phi^{k_{ij}^+}_i\big)-d_j^+\big)$.
\end{enumerate}
Furthermore, for any $N\in \N$ we can make all $d_j^\pm$
and $k_{ij}^\pm$ divisible by~$N$.
\end{theorem}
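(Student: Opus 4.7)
The plan is to adapt the index recurrence argument of \cite[Thm.\ 5.1]{GG:convex}, enhancing it so as to produce two sign-symmetric sequences rather than one. The starting point is a normal form: using homotopy invariance in $\TSpn(2n)$, I would decompose each strongly non-degenerate path $\Phi_i$ as a direct sum of elementary building blocks---elliptic rotation paths $t\mapsto\exp(2\pi\sqrt{-1}\,\theta_{i,s}t)$ with $\theta_{i,s}\in\R\setminus\Q$ (irrationality being guaranteed by strong non-degeneracy), together with a hyperbolic piece whose index contribution is an integer $h_i$. Additivity of $\hmu$ and $\cz$ under direct sums then gives
\[
\hmu(\Phi_i^k)=k\hmu(\Phi_i)=kh_i+2k\sum_s\theta_{i,s},\qquad
\cz(\Phi_i^k)=kh_i+\sum_s\sgn(\theta_{i,s})\bigl(2\lfloor k|\theta_{i,s}|\rfloor+1\bigr),
\]
valid whenever $k\theta_{i,s}\notin\Z$ for all $s$.

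The core of the argument is a simultaneous Diophantine approximation. I would assemble $\alpha_i:=1/\hmu(\Phi_i)$ and $\beta_{i,s}:=\theta_{i,s}/\hmu(\Phi_i)$ into a single vector $v\in\R^K$, where $K=r+\sum_i n_{e,i}$ and $n_{e,i}$ is the number of elliptic pairs of $\Phi_i$, and let $T\subset\T^K$ be the closure of the orbit $\{dv\bmod\Z^K : d\in N\N\}$. Then $T$ is a closed subgroup, hence a (possibly disconnected) subtorus, and crucially $0\in T$ and $T=-T$. By Weyl equidistribution on $T$, there exist points $p_j\in T$ with $p_j\to 0$ and, for each $j$, arbitrarily large integers $d_j^+,d_j^-\in N\N$ with $d_j^+v$ close to $p_j$ and $d_j^-v$ close to $-p_j$ (the existence of the second follows from $T=-T$). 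Define $k_{ij}^\pm:=\operatorname{round}(d_j^\pm\alpha_i)$. The rounding errors and fractional-part errors are then of order $|p_j|$, and for $+$ and $-$ they are negatives of one another up to a higher-order correction.

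Conditions (i) and (ii) would then follow by direct estimates. For (i), $|k_{ij}^\pm\hmu(\Phi_i)-d_j^\pm|=O(|p_j|)$ is less than $\eta$ once $|p_j|$ is small enough; when $\Phi_i(1)$ is hyperbolic we have $\hmu(\Phi_i)=h_i\in\Z$, so absorbing $h_i$ into $N$ from the outset forces $k_{ij}^\pm=d_j^\pm/h_i$ and yields the required exact equality. For (ii), the arithmetic identity $\lfloor(k+\ell)\theta\rfloor=\lfloor k\theta\rfloor+\lfloor\ell\theta\rfloor$ holds whenever $\{k\theta\}+\{\ell\theta\}<1$; since the $\theta_{i,s}$ are irrational, $\operatorname{dist}(\ell\theta_{i,s},\Z)$ is uniformly positive for $1\leq|\ell|\leq\ell_0$, and shrinking $|p_j|$ below the minimum of these gaps reduces (ii) to direct substitution into the formula above.

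The hard part will be condition (iii), which demands an \emph{exact} integer identity. Substituting into the formulas gives
\[
\cz(\Phi_i^{k_{ij}^\pm})-d_j^\pm=\pm\sum_s\varepsilon_{i,s}\,\sgn(\theta_{i,s})+R_i^\pm,
\]
where $\varepsilon_{i,s}\in\{\pm 1\}$ records which side of the nearest integer $k_{ij}^\pm\theta_{i,s}$ falls on, and $R_i^\pm$ is a real quantity of order $|p_j|$ which, being a difference of integers, must itself be an integer---hence zero once $|p_j|$ lies below an explicit threshold depending only on the $\Phi_i$. The symmetry $d_j^-v\approx-p_j=-d_j^+v$ flips every $\varepsilon_{i,s}$, and (iii) follows on the nose. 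Divisibility of $d_j^\pm$ and $k_{ij}^\pm$ by any prescribed $N$ is maintained throughout by carrying out the construction over $N\N$ from the outset, after absorbing the LCM of the $h_i$'s for hyperbolic $i$ into $N$.
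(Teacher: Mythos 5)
Your proposal takes a genuinely different route from the paper's. The paper's argument (for $r\geq 1$) applies Minkowski's theorem on linear forms to a system of inequalities with the iteration vector $\vk=(k_1,\dots,k_r)$ as the variable, first solving for $\vk$ and only then taking $d$ to be the nearest integer to $k_i\hmu(\Phi_i)$; the $\vk^-$ is then found by a second application, asking that $(k^-+k^+)\lambda_{iq}$ be very close to $\Z$. You invert the roles: the single Diophantine variable is $d$, the approximation targets are packed into the vector $v$ whose orbit closure $T\subset\T^K$ is a closed subgroup, and the $k_i$ are recovered a posteriori by rounding $d/\hmu(\Phi_i)$. Using $T=-T$ to produce the sign-symmetric sequences is a clean replacement for the paper's choice of $k^-$ with $k^-\vec\lambda$ close to $-k^+\vec\lambda$, and the computation leading to (iii) --- that $\cz(\Phi_i^{k^\pm})-d^\pm$ is an integer of order $\|p_j\|$ away from $\pm\sum_s\sgn(\theta_{i,s})\varepsilon_{i,s}$, hence coincides with the latter once $\|p_j\|$ is below a threshold --- is correct in spirit and mirrors the paper's Subcase B, (iii).

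There are two places where you should tighten the argument. First, divisibility of the iterates: taking $d\in N\N$ makes $d$ divisible by $N$ and, after absorbing $\mathrm{lcm}(h_i)$ into $N$, also handles $k_i=d/h_i$ for hyperbolic $\Phi_i$; but for elliptic $\Phi_i$ the number $\hmu(\Phi_i)$ is irrational, so $k_i=\mathrm{round}(d\alpha_i)$ with $d\in N\N$ has no reason to lie in $N\Z$. You need to build the requirement into the vector: replace the component $\alpha_i$ by $\alpha_i/N$ (keeping $\beta_{i,s}$ unscaled), so that $d\alpha_i$ is forced close to $N\Z$ and $k_i$ can be taken to be the nearest multiple of $N$. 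Second, the order of quantifiers in choosing $p_j$ and $d_j^\pm$ is delicate: the sign flip of each $\varepsilon_{i,s}$ requires that the error in $\|d_j^-v+d_j^+v\|$ be much smaller than $\|d_j^+v\|$, and if $p_j$ is chosen in the abstract and only approximated by $d_j^+v$, that smallness is not automatic. The cleaner phrasing, matching the paper, is to fix $d_j^+$ first, set $p_j:=d_j^+v \bmod \Z^K$ (nonzero by irrationality), and then pick $d_j^-$ with $\|d_j^-v+p_j\|$ much smaller than the smallest nonzero component of $p_j$. With these two repairs the argument goes through.
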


The condition that $\hmu(\Phi_i)>0$ for all $i$ can be relaxed, but
the theorem, as is, is sufficient for our purposes.

In the assertion and the proof of the theorem we follow closely
\cite[Sect.\ 5]{GG:convex}. (The new point is (iii); the rest is
contained in, e.g., \cite[Thm.\ 5.1]{GG:convex}.) Note that it
suffices to find just one pair $\vk^\pm=(k_1^\pm,\ldots,k_r^\pm)$ of
iteration vectors and one pair $d^\pm$, both divisible by any given
$N$ --- and this is the form of the theorem we actually use here. Once
$\vk^\pm_1=\vk^\pm$ and $d_1^\pm=d^\pm$ are found we can replace $N$
by $pN$, where $p$ is a sufficiently large integer, and repeat the
process to find $\vk^\pm_2$ and $d^\pm_2$, and so on.

\subsection{Proof of Theorem \ref{thm:IRT}}
\label{sec:IRT-pf}
We first establish the case of a single path $\Phi$, i.e., $r=1$, and
then show how to modify the argument for a finite collection of paths.

\subsubsection{The case of $r=1$.}
\label{sec:IRT-pf-r=1} 
Let $\Phi=\Phi_1\in\TSp(2n)$. Throughout the argument we suppress $i$
in the notation, i.e., we write $k$ for $k_{11}$ or $\vk_1$, etc. To prove
the theorem in this setting, we will consider two subcases depending
on the end-map $\Phi(1)$ and then derive the general case from
additivity.  Fix $\eta>0$ and $\ell_0\in\N$. Without loss of
generality, we can assume that $\eta<1/2$.

\smallskip\noindent\emph{Subcase A: $\Phi(1)$ is hyperbolic.} Set
$d_k^\pm=\hmu(\Phi^k)$ for any $k\in\N$. Clearly, (i) is automatically
satisfied. Furthermore, $\Phi^k$ is non-degenerate for all $k\in \N$
and $\hmu(\Phi^k)=\cz(\Phi^k)$. Hence we have
$$
\cz(\Phi^{k+\ell})=\hmu(\Phi^{k})+\hmu(\Phi^{\ell})
=\cz(\Phi^{k})+\cz(\Phi^{\ell}).
$$
Thus (i)--(iii) hold for all $k^\pm=k\in\N$ and all $\ell$, with
$d_k^\pm=d_k$. To make $d$ and $k$ divisible by $N$ it suffices to
just take $k$ divisible by $N$.

\smallskip\noindent\emph{Subcase B: $\Phi(1)$ is elliptic.} Let
$\exp\big(\pm 2\pi\sqrt{-1}\lambda_q\big)$, for $q=1,\ldots,n$, be the
eigenvalues of $\Phi(1)\in\Sp(2n)$, where $|\lambda_q|<1$. (The choice
of sign for $\lambda_q$ is not essential, but when the eigenvalues are
distinct it is convenient to assume that
$\exp\big(2\pi\sqrt{-1}\lambda_q\big)$ are the eigenvalues of the
first kind; see, e.g., \cite{SZ}.)  Since $\Phi(1)$ is strongly
non-degenerate, all $\lambda_q$ are irrational. Set
\begin{equation}
\label{eq:eps1}
\ep_0=\min_{0<\ell\leq\ell_0}\min_q\|\lambda_q\ell\|>0,
\end{equation}
where $\|\cdot\|$ stands for the distance to the nearest integer. Let
$\ep>0$ be so small that
$$
\ep\leq\ep_0\quad\textrm{and}\quad n\ep<\eta.
$$
It is easy to see that there exists $k>0$ such
that for all $q$ we have
\begin{equation}
\label{eq:eps2}
\|\lambda_q k\|<\ep\leq \ep_0.
\end{equation}
Indeed, consider the positive semi-orbit
$\Gamma_+=\{k\vec{\lambda}\mid k\in \N\}\subset \T^{n}$ where
$\vec{\lambda}\in\T^{n}$ is the collection of eigenvalues of
$\Phi(1)$. As is well known, the closure $\Gamma$ of $\Gamma_+$ is a
subgroup of $\T^{n}$. Hence $\Gamma_+$ contains points arbitrarily
close to the unit in $\T^{n}$ and, in particular, there exist
infinitely many points $k\vec{\lambda}\in\Gamma_+$ in the
$2\pi\ep$-neighborhood of the unit. Clearly, for any $N\in\N$ we can
also make $k$ divisible by $N$. (To see this, it suffices to replace
the semi-orbit $\Gamma_+$ by $\{kN\vec{\lambda}\mid k\in \N\}$.)

Let $d$ be the nearest integer to $\hmu(\Phi^{k})$. Then
\begin{equation}
\label{eq:d}
\big|d-\hmu(\Phi^{k})\big|\leq n\ep<\eta,
\end{equation}
and hence \ref{cond:i} 
is satisfied. (This also shows that $d$ is unambiguously defined.)
Furthermore, replacing as above the semi-orbit $\Gamma_+$ by
$\{kN\vec{\lambda}\mid k\in \N\}$ we can also make $d$ divisible by
$N$.

It is shown in \cite[Sect.\ 5.2.1, Subcase C]{GG:convex} that the
inequalities \eqref{eq:eps2} and \eqref{eq:d} imply (ii). For the sake
of completeness we recall here the argument. Observe first that a
small perturbation of $\Phi$ does not effect individual terms in these
inequalities for fixed $k$ and $\ell$. Thus, by altering $\Phi$
slightly, we can ensure that all eigenvalues $\lambda_q$ are
distinct. Then we can write $\Phi$, up to homotopy, as the product of
a loop $\varphi$ and the direct sum of paths
$\Psi_q=\exp(2\pi \sqrt{-1} \lambda_q t)\in\TSp(2)$ for a suitable
choice of signs of $\lambda_q$; see, e.g., \cite[Sect.\ 3]{SZ}. The
loop $\varphi$ contributes $k\hmu(\varphi)$ to $\cz(\Phi^k)$ and hence
we only need to prove (ii) when $\varphi=I$.

Then, for any $k$,
$$
\cz(\Phi^k)=\sum_q\cz(\Psi_q^k).
$$
Next, observe that by \eqref{eq:eps1} and \eqref{eq:eps2} we have
$$
d=\sum_q [\hmu(\Psi_q)],
$$
where $[\,\cdot\,]$ denotes the nearest integer. Thus it suffices to
prove (ii) and (iii) for each path $\Psi_q$ individually when we set
$d_q=[\hmu(\Psi_q)]$.  However, with \eqref{eq:eps1} and
\eqref{eq:eps2} in mind, (ii) for $\Psi_q$ easily follows from the
definition.

Now we need to find $k^-$ satisfying (ii) for a suitable choice of
$d^-$. To this end, observe that for any $\delta>0$ the system of
inequalities
\begin{equation}
\label{eq:delta}
\left\|\lambda_q (k^-+k^+)\right\|<\delta
\end{equation}
has infinitely many solutions $k^-\in\N$, where $k^+:=k$. This is
again a consequence of the fact that $\Gamma_+$ is dense in the group
$\Gamma$, and hence contains points arbitrarily close to
$-k^+\vec{\lambda}$ in $\T^n$. It is also clear that $k^-$ can be
made divisible by any given integer. Using \eqref{eq:eps2}, let us
take $\delta$ so small that $\|k^+\lambda_q\|+\delta<\ep$.

Then \eqref{eq:eps2} is still satisfied for $k=k^-$. Let $d^-$ be the
nearest integer to $\hmu(\Phi^{k^-})$. Then \eqref{eq:eps2} and
\eqref{eq:d} hold for $k^-$ and $d^-$ and hence so does (ii). Finally,
since $k^-\vec{\lambda}$ is close to $-k^+\vec{\lambda}$ in the torus
$\T^n$, we have
$$
\cz\big(\Phi^{k^-}\big)-d^-=\cz\big(\Phi^{-k^+}\big)+d^+
=-\big(\cz\big(\Phi^{k^+}\big)-d^+\big),
$$
which proves (iii).

\smallskip\noindent\emph{Putting Subcases A--B together.} Let us
decompose $\Phi$ into the direct sum of two paths $\Phi_A$ and
$\Phi_B$ such that $\Phi_A(1)$ is hyperbolic and $\Phi_B(1)$ is
elliptic. (It is easy to see that we can always do this up to
homotopy.) We take $k^\pm$ as in Subcase B and adjust $d^\pm$ by
adding $\hmu\big(\Phi_A^{k^\pm}\big)$. It is clear that (i)--(iii)
hold for this choice of $k^\pm$ and $d^\pm$ and that, in addition, we
can make $k^\pm$ and $d^\pm$ divisible by any integer.

\subsubsection{The general case: $r\geq 1$.}
\label{sec:CGT-pf-gen} 
Let $\Phi_1,\dotsc,\Phi_r$ be a finite collection of elements in
$\TSp(2n)$. As above, each of these paths can be decomposed into a sum
of paths with hyperbolic end-points and elliptic end-points. Then it
is easy to see that it suffices to prove the theorem when all
$\Phi_i(1)$ are elliptic. For the general case follows again by
additivity.

Denote by $\exp\big(\pm 2\pi\sqrt{-1}\lambda_{iq}\big)$ the
eigenvalues of $\Phi_i$ with $|\lambda_{iq}|<1$ and set
$\Delta_i=\hmu(\Phi_i)>0$. (The choice of the sign of $\lambda_{iq}$
is immaterial at the moment, but again when all eigenvalues are
distinct it is convenient to assume that
$\exp\big(2\pi\sqrt{-1}\lambda_{iq}\big)$ are the eigenvalues of the
first kind.)  Given $\ep>0$, consider the system of inequalities
\begin{align}
  \|k_i\lambda_{iq}\|& <\ep \quad 
   \textrm{ for all $i$ and $q$,}\label{eq:k1}\\
  |k_1\Delta_1-k_{i}\Delta_{i}| &< \frac{1}{16} \quad
\textrm{ for $i=2,\dotsc,r$,}\label{eq:k2}
\end{align}
where we treat the integer vector $\vec{k}=(k_1,\dotsc, k_r)\in\Z^r$
as a variable. Introducing additional integer variables $c_{iq}$, we
can rewrite \eqref{eq:k1} in the form
\begin{equation}
\label{eq:ciq}
|k_i\lambda_{iq}-c_{iq}|<\ep.
\end{equation}
With this in mind, the system of equations \eqref{eq:k1} and
\eqref{eq:k2}, or equivalently \eqref{eq:k2} and \eqref{eq:ciq}, has
one fewer equation than the number of variables. By Minkowski's
theorem (see, e.g., \cite{Ca}), there exists a non-zero solution
$\vec{k}= (k_{1},\dotsc, k_{r})$ of \eqref{eq:k2} and
\eqref{eq:ciq}. Now it follows from \eqref{eq:k2} and the assumption
that $\Delta_i>0$ that all $k_i$ are non-zero and have the same
sign. Hence, replacing if necessary $\vk$ by $-\vk$, we can ensure
that $k_i>0$. Moreover, we can make all $k_{i}$ divisible by any fixed
integer $N$.

Note also that by \eqref{eq:k2} we have
$$
\Big|\sum\nolimits_q c_{1q} -\sum\nolimits_q
c_{iq}\Big|<\frac{1}{16}+2r\ep.
$$ 
If $\ep<1/4r$, this inequality is satisfied only when the left-hand
side is zero.

Fix $\ell_0$ and $\eta>0$ which we assume to be sufficiently small
(e.g., $\eta<1/4$).  Similarly to Subcase B, set
$$
\ep_0=\min_{0<\ell\leq\ell_0}\min_{i,q}\|\lambda_{iq}\ell\|>0,
$$
and let $\ep>0$ be so small that again
$$
\ep\leq\ep_0\quad\textrm{and}\quad n\ep<\eta.
$$

By \eqref{eq:k2} we have
$$
|k_i\Delta_i-k_{i'}\Delta_{i'}|<\frac{1}{8} \quad \textrm{for all $i$
  and $i'$,}
$$
and $\|k_{i}\Delta_i\|<\eta$ by the first group of inequalities
\eqref{eq:k1}. Thus $k_{i}\Delta_i$ is $\eta$-close, for all $i$, to
the same integer
$$
d=[k_{i}\Delta_i]=\sum\nolimits_q c_{iq}=\sum\nolimits_q c_{1q}.
$$
In other words, (i) is satisfied for this choice of $d$.  Furthermore,
for every $i$, condition \eqref{eq:eps2} is met for $\lambda_{iq}$, and
hence (ii) holds for all $\Phi_i$. As in Subcase B, we set $d^+=d$ and
$\vk^+=\vk$. Note that so far it would be sufficient to take $1/8$ as
the right-hand side in \eqref{eq:k2}.

Our next goal is to find $d^-$ and $\vk^-$. To this end consider the
inequalities
\begin{align}
\|k'_i\lambda_{iq}\|& <\delta 
\quad\textrm{ for all $i$ and $q$,}\label{eq:k'1}\\
  |k'_1\Delta_1-k'_{i}\Delta_{i}| &< \frac{1}{16} 
\quad\textrm{ for $i=2,\dotsc,r$,}\label{eq:k'2}
\end{align}
where \eqref{eq:k'1} can again be written in the form
\begin{equation}
\label{eq:ciq'}
\big|k'_i\lambda_{iq}-c'_{iq}\big|<\delta
\end{equation}
for some integer variables $c'_{iq}$. For any $\delta>0$, the system
of inequalities \eqref{eq:k'2} and \eqref{eq:ciq'} has a non-zero
solution $\vk'$ by Minkowski's theorem. The same argument as above
shows that we can take $k'_i>0$ for all $i$ and, in fact, we can make
$k'_i$ arbitrarily large. In particular, we can ensure that
$$
k_i^-:=k'_i-k_i^+>0.
$$
Then we have
$$
\left\|(k_i^-+k_i^+)\lambda_{iq}\right\| <\delta \quad\textrm{ for all
  $i$ and $q$}
$$
and
$$
|k_1^-\Delta_1-k_{i}^-\Delta_{i}| < \frac{1}{8} \quad\textrm{ for
    $i=2,\dotsc,r$,}
$$
where to obtain the last inequality we used \eqref{eq:k2} and
\eqref{eq:k'2}. Let us now assume that $\delta>0$ is so small that
$\|k_{iq}^+\lambda_{iq}\|+\delta<\ep$ for all $i$ and $q$. Then we
also have
$$
\|k_{iq}^-\lambda_{iq}\|<\ep \quad\textrm{ for all $i$ and $q$,}
$$
i.e., \eqref{
eq:k1} holds for $\vk^-$. 

To summarize, $\vk^-$ satisfies \eqref{eq:k1} and \eqref{eq:k2} with
$1/16$ replaced by $1/8$, which is sufficient for our
purposes. Setting
$$
d^-=[k_{i}^-\Delta_i]=\sum\nolimits_q (c_{iq}'-c_{iq})
$$
we conclude that (i) and (ii) hold for $\vk^-$. It is also clear that
we can make, if necessary, all $k_i^-$ divisible by an arbitrary
constant $N$.

Finally, (iii) also holds for each $\Phi_i$ individually just as in
Subcase B since the vector $\vk^-$ is close to $-\vk^+$ modulo the
integer lattice. This concludes the proof of Theorem
\ref{thm:IRT}. \qed

\section{Proofs of Theorems \ref{thm:main} and \ref{thm:app}}
\subsection{Proof of Theorem \ref{thm:main}}
\label{sec:pf-main}
Let us focus on the case where $\alpha$ is index-positive, for the
argument in the index-negative case is similar. The main tool used in
the proof is the positive equivariant symplectic homology. Recall from
Section \ref{sec:esh} that both of the conditions \ref{cond:F} and
\ref{cond:NF} ensure that this homology (for $M$ or the filling) with
integer grading is defined and, as Proposition \ref{prop:CH} shows,
given by \eqref{eq:CH}. Then the proof is the same in both cases of
the theorem, \ref{cond:F} and \ref{cond:NF}, and relies only on the
condition shared by these cases that $\alpha$ has no good contractible
periodic orbits $\ga$ such that $\cz(\ga)=0$ if $n$ is odd or
$\cz(\ga) \in \{0,\pm 1\}$ if $n$ is even. We should note that the
argument also uses several ideas from \cite{DLW2, DLLW}.

Starting the proof, assume that $\alpha$ has finitely many distinct
contractible simple closed orbits $\ga_1,\dots,\ga_r$. (Here, as in Section \ref{sec:lesh}, ``simple"
means that each $\ga_i$ is not an iterate of a contractible orbit.)
Our goal is to establish the lower bound on $r$ asserted by the
theorem. Define
\[
  \lo = \max_{1 \leq i \leq r} \min\{k_0 \in \N \mid \cz(\ga_i^{k+\l})
  \geq \cz(\ga_i^k)+2n+1 \text{ for all } k \geq 1 \text{ and } \l
  \geq k_0\}.
\]
By Theorem \ref{thm:IRT}, given $N \in \N$, $\eta>0$ and $\lo$ as
above we have two sequences of integer vectors
$(d^\pm_j,k^\pm_{1j},\dots,k^\pm_{rj})$ satisfying conditions
\ref{cond:i}, \ref{cond:ii} and \ref{cond:iii} and such that all
$d^\pm_j,k^\pm_{1j},\dots,k^\pm_{rj}$ are divisible by $N$. As has
been mentioned before, we will only need one such vector from each
sequence. Hence set
\[
  (d,k_1,\dots,k_r):=(d^+_1,k^+_{11},\dots,k^+_{r1})\text{ and
  }(d',k'_1,\dots,k'_r):=(d^-_1,k^-_{11},\dots,k^-_{r1}).
\]
The following lemma, giving an expression for the truncated mean Euler
characteristic (see Section \ref{sec:lesh}), is one of the key steps
in the proof; cf.\ \cite[Sublemma 5.2]{AM:multiplicity}.

\begin{lemma}
\label{lemma:resonance}
The numbers $N$ and $\eta$ can be chosen such that $d=2sc_B$ for some
integer $s$ and
\[
  \sum_{i=1}^r \sum_{k=1}^{k_{i}} \chi(\ga_i^k) = \sum_{i=1}^r k_{i}
  \hat{\chi} (\gamma_i) = d \chi_+(M) = (-1)^n s\chi(B).
\]
The same holds for $d',k'_1,\dots,k'_r$.
\end{lemma}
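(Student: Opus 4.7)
The plan is to split the chain of equalities into four independent pieces and handle each by a different mechanism; the primed case is identical and uses condition \ref{cond:i} of Theorem \ref{thm:IRT} for the negative sequence.

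First, compute $\chi_+(M)$ from Proposition \ref{prop:CH}. In the Poincar\'e series associated with \eqref{eq:CH}, each shifted copy of $H_*(B;\Q)$ contributes $(-1)^{2mc_B-n}\chi(B)=(-1)^n\chi(B)$ to the alternating sum of Betti numbers, and the copies recur every $2c_B$ degrees; passing to the Ces\`aro mean gives $\chi_+(M)=(-1)^n\chi(B)/(2c_B)$. Invoking the divisibility clause of Theorem \ref{thm:IRT}, enforce $2c_B\mid d$, so that $d=2sc_B$ for some $s\in\Z$, which then yields $d\chi_+(M)=(-1)^n s\chi(B)$.

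Second, the identity $\sum_i\sum_{k=1}^{k_i}\chi(\gamma_i^k)=\sum_i k_i\hat\chi(\gamma_i)$ follows from periodicity of the sequence $\{\chi(\gamma_i^k)\}_k$. The key observation is that each such sequence is periodic in $k$ with some period $p_i$ dictated by the Floquet data of $\gamma_i$: hyperbolic eigenvalues and irrational elliptic rotation numbers of the linearized Poincar\'e map contribute period $2$, while rational elliptic rotation numbers of denominator $b$ contribute period $b$ (controlling when degenerate iterates recur). Enlarging $N$ to be a common multiple of $2c_B$ and all the $p_i$'s, one has $p_i\mid k_i$, so a full-period block sum gives the required equality exactly.

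Third, the equality $\sum_i k_i\hat\chi(\gamma_i)=d\chi_+(M)$ comes from combining the resonance relation \eqref{eq:resonance} with condition \ref{cond:i} of Theorem \ref{thm:IRT}. Writing $d\chi_+(M)=\sum_i d\hat\chi(\gamma_i)/\hat\mu(\gamma_i)$ and using $|k_i\hat\mu(\gamma_i)-d|<\eta$, one obtains
$$\Bigl|\sum_i k_i\hat\chi(\gamma_i) - d\chi_+(M)\Bigr| \le \eta \sum_i \frac{|\hat\chi(\gamma_i)|}{\hat\mu(\gamma_i)}.$$
By the previous paragraph the left-hand sum equals the integer $\sum_{i,k}\chi(\gamma_i^k)$, and by the first paragraph $d\chi_+(M)=(-1)^n s\chi(B)$ is also an integer; the constant $\sum_i|\hat\chi(\gamma_i)|/\hat\mu(\gamma_i)$ on the right is finite and independent of the iteration data since $\hat\mu(\gamma_i)>0$ and $r<\infty$. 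Choosing $\eta$ smaller than the reciprocal of this constant forces equality of the two integers, completing the proof. The main technical obstacle lies in the periodicity step: controlling $\chi(\gamma_i^k)$ across potentially degenerate iterates of $\gamma_i$ requires the rationality of the local mean Euler characteristic and the explicit structure of local equivariant symplectic homology of iterated Reeb orbits from \cite{GGo}.
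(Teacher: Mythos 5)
Your proof is correct and follows essentially the same three-step route as the paper's: compute $\chi_+(M)$ from \eqref{eq:CH}, use periodicity of $k\mapsto\chi(\gamma_i^k)$ to get the first equality, and combine the resonance relation with condition \ref{cond:i} and small $\eta$ for the second; your explicit error bound $\eta\sum_i|\hat\chi(\gamma_i)|/\hat\mu(\gamma_i)<1$ is in fact the cleaner version of the paper's estimate. One small misplaced concern: since the $\gamma_i$ are strongly non-degenerate here, the sequence $\chi(\gamma_i^k)$ has period $1$ or $2$ only, so no rational rotation numbers or degenerate iterates arise, and the appeal to \cite{GGo} for a general periodicity statement is both unnecessary and, in the degenerate case, not actually guaranteed.
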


\begin{proof}
  Let $N$ be any (positive) integer multiple of $2c_B$. The first
  equality follows from the (strong) non-degeneracy of
  $\ga_1,\dots,\ga_r$ since the numbers $k_i$ are even. It is easy to
  see from \eqref{eq:CH} that
\[
\chi_+(M) = (-1)^n \frac{\chi (B)}{2c_B},
\]
which implies the third equality. To prove the second equality, take
$\eta$ sufficiently small such that $\eta|\chi_+(M)|<1$. Using the
resonance relation \eqref{eq:resonance}, we conclude that
\begin{eqnarray*}
d \chi_+(M) 
& = & \sum_{i=1}^r \frac{d\hat\chi (\gamma_i)}{\mi (\gamma_i)} \\
& = & \sum_{i=1}^r  k_i\hat\chi (\gamma_i) +  
      \sum_{i=1}^r  \frac{(d-k_i\mi(\ga_i))
      \hat\chi(\gamma_i)}{\mi(\ga_i)} \\
& = & \sum_{i=1}^r  k_i\hat\chi (\gamma_i) +  
      \sum_{i=1}^r  \frac{(d-\mi(\ga_i^{k_i}))
      \hat\chi (\gamma_i)}{\mi(\ga_i)}. 
\end{eqnarray*}
By property \ref{cond:i} of Theorem \ref{thm:IRT} and, again,
\eqref{eq:resonance},
\[
  \bigg| \sum_{i=1}^r \frac{(d-\mi(\ga_i^{k_i}))\hat\chi
    (\gamma_i)}{\mi(\ga_i)}\bigg| < \eta \bigg| \sum_{i=1}^r
  \frac{\hat\chi (\gamma_i)}{\mi(\ga_i)}\bigg| = \eta|\chi_+(M)|<1.
\]
Note that by our choice of $N$ the numbers $d\chi_+(M)$ and
$k_i\hat\chi (\gamma_i)$ for all $i$ are integers.  Therefore,
\[
d \chi_+(M) =  \sum_{i=1}^r  k_i\hat\chi (\gamma_i).
\]
Obviously, the same argument works for $d',k'_1,\dots,k'_r$.
\end{proof}

Let us now break down the proof of Theorem \ref{thm:main} into two
cases, according to the parity of~$n$.

\vskip .3cm

\noindent {\bf Case 1: $n$ is odd.}

\vskip .2cm

Fix $N$ and $\eta$ as in Lemma \ref{lemma:resonance}. (In particular,
$N$ is even.)  Clearly, $\eta$ can be chosen so small that the vector
$(d,k_{1},\dots,k_{r})=(d^+_1,k^+_{11},\dots,k^+_{r1})$ given by
Theorem \ref{thm:IRT} satisfies
\begin{equation}
\label{eq:IRT1}
\cz(\ga_i^{k_i-\l}) = d - \cz(\ga_i^\l),
\end{equation}
\begin{equation}
\label{eq:IRT2}
\cz(\ga_i^{k_i+\l}) = d + \cz(\ga_i^\l),
\end{equation}
and
\begin{equation}
\label{eq:IRT3}
|\cz(\ga_i^{k_i}) - d| \leq n
\end{equation}
for every $1\leq i \leq r$ and $1 \leq \l \leq \lo$.  (Here, since $N$
is even, the integers $d$ and $k_i$ are even. One can also assume that
$k_i > \lo$ for all $i$.) Observe that for each periodic orbit
$\ga_i$, there are four types of iterates outside $\ga_i^{k_i}$:
\begin{itemize}
\item[(A)] $\ga_i^{k_i-\l}$ with $\l>\lo$;
\item[(B)] $\ga_i^{k_i-\l}$ with $1\leq \l \leq \lo$;
\item[(C)] $\ga_i^{k_i+\l}$ with $1\leq \l \leq \lo$;
\item[(D)] $\ga_i^{k_i+\l}$ with $\l>\lo$.
\end{itemize}
Let us analyze the contributions of these iterates to the Morse type
numbers defined by the alternating sum
\begin{equation}
\label{eq:c_m}
\sum_{m=m_{\min}}^{d} (-1)^m c_m,
\end{equation}
where, as in Section \ref{sec:lesh}, $c_m$ is the number of good
closed orbits of index $m$ and $m_{\min}>-\infty$ is the smallest
integer with $c_{m} \neq 0$.  First, class (A) iterates have index
$<d$ and hence all good orbits here contribute to \eqref{eq:c_m}.
Indeed, by definition of $\lo$, we have
$\cz(\ga_i^{k_i}) \geq \cz(\ga_i^{k_i-\l}) + 2n+1$ for every $\l>\lo$
which, combined with \eqref{eq:IRT3}, implies that
$\cz(\ga_i^{k_i-\l}) \leq d - n-1$ for all $\l>\lo$.  Class (D) orbits
do not contribute to \eqref{eq:c_m} since
$\cz(\ga_i^{k_i+\l}) \geq \cz(\ga_i^{k_i}) + 2n+1 \geq d + n+1$ for
every $\l>\lo$, where the last inequality again follows from
\eqref{eq:IRT3}.

In order to understand the contributions from classes (B) and (C), let
us further divide each of them into two subclasses:

\begin{itemize}
\item[(B1)] $\ga_i^{k_i-\l}$ with $1\leq \l \leq \lo$ if
  $\cz(\ga_i^\l) \geq 0$,
\item[(B2)] $\ga_i^{k_i-\l}$ with $1\leq \l \leq \lo$ if
  $\cz(\ga_i^\l) < 0$,
\end{itemize}
and
\begin{itemize}
\item[(C1)] $\ga_i^{k_i+\l}$ with $1\leq \l \leq \lo$ if
  $\cz(\ga_i^\l) \geq 0$,
\item[(C2)] $\ga_i^{k_i+\l}$ with $1\leq \l \leq \lo$ if
  $\cz(\ga_i^\l) < 0$.
\end{itemize}
Now all of the good orbits in class (B1) contribute to \eqref{eq:c_m},
while class (B2) makes no contribution to \eqref{eq:c_m}. Indeed, by
\eqref{eq:IRT1}, $\cz(\ga_i^{k_i-\l}) = d - \cz(\ga_i^\l)$ which is
$\leq d$ whenever $\cz(\ga_i^\l) \geq 0$ and $>d$ whenever
$\cz(\ga_i^\l) < 0$.

The key to dealing with class (C1) is the condition that $\alpha$ has
no good contractible periodic orbits of index zero.  (This is the main
point where this condition is used.) Then for all good iterates
$ \ga_i^{k_i+\l}$ in class (C1) $\cz(\ga_i^\l) > 0$ and, by
\eqref{eq:IRT2}, $\cz(\ga_i^{k_i+\l}) = d + \cz(\ga_i^\l) > d$
whenever $\ga_i^{k_i+\l}$ is good.  Finally, all of the good orbits
from class (C2) contribute to \eqref{eq:c_m} since $\cz(\ga_i^\l) < 0$
and $\cz(\ga_i^{k_i+\l}) = d + \cz(\ga_i^\l) < d$ by \eqref{eq:IRT2}.
(Above $\cz(\ga_i^{k_i+\l})$ and $\cz(\ga_i^\l)$ have the same parity
since the integers $k_i$ are even.)

To summarize, all good orbits from classes (A), (B1) and (C2) have
index $\leq d$ and contribute to \eqref{eq:c_m}, and good orbits from
classes (D), (B2) and (C1) have index $> d$ and make no contribution
to \eqref{eq:c_m}.  Define
\begin{equation}
\label{eq:c^e}
c^e_\pm = \sum_{i=1}^r \#\{1 \leq \l \leq \lo \mid \cz(\ga_i^\l) < 0,\ 
\ga_i^{k_i\pm \l}\text{ is good and }\cz(\ga_i^\l)\text{ is even}\}
\end{equation}
and
\begin{equation}
\label{eq:c^o}
c^o_\pm = \sum_{i=1}^r \#\{1 \leq \l \leq \lo \mid \cz(\ga_i^\l) < 0,\ 
\ga_i^{k_i\pm \l}\text{ is good and }\cz(\ga_i^\l)\text{ is odd}\}.
\end{equation}
Consider now $ \sum_{m=m_{\min}}^\infty (-1)^m c_m $ to which all good
orbits from classes (A)--(D) and the collection $\{\gamma_i^{k_i}\}$
contribute. In particular, the contributions of class (B2) and class
(C2) iterates are respectively $c_-^e-c_-^o$ and
$c_+^e-c_+^o$. Viewing \eqref{eq:c_m} as
$$ 
\sum_{m=m_{\min}}^{d} (-1)^m c_m=\sum_{m=m_{\min}}^\infty (-1)^m c_m -
\sum_{m>d} (-1)^m c_m,
$$ 
with the above discussion in mind, we have
\begin{equation}
\label{eq:contributions1}
\sum_{m=m_{\min}}^{d} (-1)^m c_m 
= \sum_{i=1}^r\bigg(\underbrace{\sum_{\l=1}^{k_i}
  \chi(\ga_i^\l)}_{(A)+(B)+\chi(\ga_i^{k_i})} 
+ \underbrace{c_+^e-c_+^o}_{(C2)}
- \underbrace{(c_-^e-c_-^o)}_{(B2)} 
- \sum_{\cz(\ga_i^{k_i})>d} \chi(\ga_i^{k_i})\bigg).
\end{equation}
Here, as indicated by the underbraces, the first term on the
right-hand side comes from the iterates in classes (A) and (B) and the
iterate $\ga_i^{k_i}$, the second term comes from class (C2) iterates,
and the third term cancels out the contribution of class (B2) orbits
to the first term. Finally, the last term eliminates the contribution
to the first term of the orbits $\ga_i^{k_i}$ with index greater than
$d$.

Note that, since $\cz(\ga^{k_i-\l})$ and $\cz(\ga^{k_i+\l})$ have the
same parity, $c_-^e = c_+^e$ and $c_-^o = c_+^o$.  Thus the second and
third terms on the right-hand side of equation
\eqref{eq:contributions1} cancel each other out and we arrive at
\begin{equation}
\label{eq:contributions2}
\sum_{m=m_{\min}}^{d} (-1)^m c_m 
= \sum_{i=1}^r\sum_{\l=1}^{k_i} \chi(\ga_i^\l) 
- \sum_{i=1}^r\sum_{\cz(\ga_i^{k_i})>d} \chi(\ga_i^{k_i}).
\end{equation}
Define
\begin{equation}
\label{eq:r^e}
r^e_\pm = \#\{1 \leq i \leq r \mid 
\pm(\cz(\ga_i^{k_i}) - d) > 0,\ 
\ga_i^{k_i}\text{ is good and }\cz(\ga_i^{k_i})\text{ is even}\}
\end{equation}
and
\begin{equation}
\label{eq:r^o}
r^o_\pm = \#\{1 \leq i \leq r \mid 
\pm(\cz(\ga_i^{k_i}) - d) > 0,\ 
\ga_i^{k_i}\text{ is good and }\cz(\ga_i^{k_i})\text{ is odd}\}.
\end{equation}
Notice that the last term in \eqref{eq:contributions2} is given by
$r^e_+ - r^o_+$.  Then, if we write $d=2sc_B$, equation
\eqref{eq:contributions2}, together with Lemma \ref{lemma:resonance},
yields the relation
\begin{align*}
\label{eq:morse_ineq}
  -s\chi(B) - r^e_+ + r^o_+ 
  & = \sum_{m=m_{\min}}^{d} (-1)^m c_m \\
  & \geq \sum_{m=m_{\min}}^{d} (-1)^m b_m \\
  & = -s\chi(B) + \sum_{i=0}^{n-1} (-1)^i \dim H_i(B;\Q),
\end{align*}
where we have used the assumption that $n$ is odd. The inequality
follows from the Morse inequalities \eqref{eq:Morse_ineq} and the last
equality follows from \eqref{eq:CH} using the hypothesis that
$c_B>n/2$. Hence
\begin{equation}
\label{eq:lower estimate r^o_+}
r^o_+ \geq \sum_{i=0}^{n-1} (-1)^i \dim H_i(B;\Q).
\end{equation}
Now, we claim that
\begin{equation}
\label{eq:lower estimate r^o_-}
r^o_- \geq \sum_{i=0}^{n-1} (-1)^i \dim H_i(B;\Q).
\end{equation}
In order to prove this, observe that applying Theorem \ref{thm:IRT} to
$N$, $\eta$ and $\lo$ as above, we obtain positive integers
$(d',k'_1,\dots,k'_r)=(d^-_1,k^-_{11},\dots,k^-_{r1})$ such that
\begin{equation*}
\label{eq:IRT1'}
\cz(\ga_i^{k'_i-\l}) = d' - \cz(\ga_i^\l),
\end{equation*}
\begin{equation*}
\label{eq:IRT2'}
\cz(\ga_i^{k'_i+\l}) = d' + \cz(\ga_i^\l),
\end{equation*}
and
\begin{equation}
\label{eq:IRT3'}
\cz(\ga_i^{k'_i}) - d' = -(\cz(\ga_i^{k_i}) - d),
\end{equation}
for every $1\leq i \leq r$ and $1 \leq \l \leq \lo$. Arguing as
before, we arrive at the equation
\begin{equation}
\label{eq:contributions3}
\sum_{m=m_{\min}}^{d'} (-1)^m c_m 
= \sum_{i=1}^r\sum_{\l=1}^{k'_i} \chi(\ga_i^\l) - r'^e_+ + r'^o_+\,,
\end{equation}
where, similarly to \eqref{eq:r^e} and \eqref{eq:r^o},
\begin{equation*}
\label{eq:r'^e}
r'^e_\pm = \#\{1 \leq i \leq r \mid 
\pm(\cz(\ga_i^{k'_i}) - d') > 0,\ 
\ga_i^{k'_i}\text{ is good and }\cz(\ga_i^{k'_i})\text{ is even}\}
\end{equation*}
and
\begin{equation*}
\label{eq:r'^o}
r'^o_\pm = \#\{1 \leq i \leq r \mid 
\pm(\cz(\ga_i^{k'_i}) - d') > 0,\ 
\ga_i^{k'_i}\text{ is good and }\cz(\ga_i^{k'_i})\text{ is odd}\}.
\end{equation*}
Notice that, due to \eqref{eq:IRT3'}, $r^e_\pm = r'^e_\mp$ and
$r^o_\pm = r'^o_\mp$.  Therefore, if we write $d'=2s'c_B$ for some
integer $s'$, equation \eqref{eq:contributions3}, together with Lemma
\ref{lemma:resonance}, gives rise to the relation
\begin{align*}
\label{eq:morse_ineq2}
-s'\chi(B) - r^e_- + r^o_- 
& = \sum_{m=m_{\min}}^{d'} (-1)^m c_m \\
& \geq \sum_{m=m_{\min}}^{d'} (-1)^m b_m \\
& = -s'\chi(B) + \sum_{i=0}^{n-1} (-1)^i \dim H_i(B;\Q),
\end{align*}
where the assumptions that $n$ is odd and $c_B>n/2$ have once more
entered the picture. In particular, \eqref{eq:lower estimate r^o_-}
holds.

Since $r^o_+$ and $r^o_-$ count two disjoint sets of orbits, in view
of \eqref{eq:lower estimate r^o_+} and \eqref{eq:lower estimate
  r^o_-}, we must have at least $j$ distinct contractible simple
closed orbits, say, $\ga_1,\dots,\ga_{j}$, where
\[
  j:= 2\sum_{i=0}^{n-1} (-1)^i \dim H_i(B;\Q) = \chi(B) + \dim H_n(B;
  \Q)
\]
as an immediate consequence of Poincar\'e duality and the assumption
that $n$ is odd.  We claim that (good) iterates of these orbits have
index different from $d$ and hence do not contribute to
$\HC^0_{d}(M)$. Indeed, since $\cz(\ga_i^\l) \neq 0$ for every
$1 \leq i \leq r$ and $\l \in \N$ such that $\ga_i^\l$ is good, we
infer from \eqref{eq:IRT1}, \eqref{eq:IRT2} and the definition of
$\lo$ that
\[
\cz(\ga_i^\l) \neq d
\]
for every $\l \neq k_i$ and $1 \leq i \leq r$ such that $\ga_i^\l$ is
good. Therefore, only the orbits $\ga_1^{k_1},\dots,\ga_r^{k_r}$ can
contribute to $\HC^0_{d}(M)$.  However, the definition of $r^o_\pm$
given by \eqref{eq:r^o} implies that $\cz(\ga_i^{k_i}) \neq d$ for all
$1 \leq i \leq j$.  Hence
\[
r \geq j + \dim \HC^0_{d}(M)
  \geq j + \dim H_n(B;\Q),
\]
where the second inequality follows from \eqref{eq:CH}.  Finally, we
conclude that
\[
  r \geq 2\sum_{i=0}^{n-1} (-1)^i \dim H_i(B; \Q) + \dim H_n(B; \Q) =
  \chi(B) + 2\dim H_n(B; \Q),
\]
which proves Theorem \ref{thm:main} when $n$ is odd.

\begin{remark}
\label{rmk:non-hyp1}
It is clear from the definition of $r^o_\pm$ and item \ref{cond:i} of
Theorem \ref{thm:IRT} that the orbits $\ga_1,\dots,\ga_{j}$ are
non-hyperbolic. In other words, if $\alpha$ has finitely many distinct
contractible simple closed orbits then at least
$\chi(B) + \dim H_n(B)$ of them are non-hyperbolic. This establishes
Theorem \ref{thm:non-hyp} when $n$ is odd.
\end{remark}

\vskip .3cm

\noindent {\bf Case 2: $n$ is even.}

\vskip .2cm

As in the previous case, fix $N$ and $\eta$ as in Lemma
\ref{lemma:resonance}, and an integer vector
$(d,k_{1},\dots,k_{r})=(d^+_1,k^+_{11},\dots,k^+_{r1})$ as in Theorem
\ref{thm:IRT}. The argument is very similar to the one for odd
$n$. Namely, we consider, for each periodic orbit $\ga_i$, the same
classes of iterates (A), (B), (C) and (D), and study their
contributions to the Morse type numbers defined by
\begin{equation}
\label{eq:c_me}
\sum_{m=m_{\min}}^{d+1} (-1)^m c_m.
\end{equation}

Due to the same index reasons as in Case 1, all good orbits in class
(A) contribute to \eqref{eq:c_me} and class (D) orbits do not
contribute to \eqref{eq:c_me}.  To deal with classes (B) and (C), we
again consider four subclasses, although this time the index
breakpoint is $-1$, rather than $0$:

\begin{itemize}
\item[(B1)] $\ga_i^{k_i-\l}$ with $1\leq \l \leq \lo$ 
if $\cz(\ga_i^\l) \geq -1$,
\item[(B2)] $\ga_i^{k_i-\l}$ with $1\leq \l \leq \lo$ 
if $\cz(\ga_i^\l) < -1$,
\end{itemize}
and
\begin{itemize}
\item[(C1)] $\ga_i^{k_i+\l}$ with $1\leq \l \leq \lo$ 
if $\cz(\ga_i^\l) \geq -1$,
\item[(C2)] $\ga_i^{k_i+\l}$ with $1\leq \l \leq \lo$ 
if $\cz(\ga_i^\l) < -1$.
\end{itemize}

As before, all of the good orbits from class (B1) contribute to
\eqref{eq:c_me} and class (B2) makes no contribution. This is because,
by \eqref{eq:IRT1}, $\cz(\ga_i^{k_i-\l}) = d - \cz(\ga_i^\l)$, which
is $\leq d+1$ whenever $\cz(\ga_i^\l) \geq -1$ and $ > d+1$ whenever
$\cz(\ga_i^\l) < -1$.

At this point recall that when $n$ is even $\alpha$ is assumed to have
no contractible good periodic orbits of index $0$ or $\pm 1$. (As in
the case of an odd $n$, this is the key point where this assumption is
utilized.) Hence, for all good iterates $ \ga_i^{k_i+\l}$ in class
(C1), $\cz(\ga_i^\l) > 1$ and, by \eqref{eq:IRT2},
$\cz(\ga_i^{k_i+\l}) = d + \cz(\ga_i^\l) > d + 1$ whenever
$\ga_i^{k_i+\l}$ is good. As a result, class (C1) does not contribute
to \eqref{eq:c_me}.  Finally, all of the good orbits from class (C2)
contribute to \eqref{eq:c_me} since $\cz(\ga_i^\l) < -1$ and
$\cz(\ga_i^{k_i+\l}) = d + \cz(\ga_i^\l) < d-1$ by \eqref{eq:IRT2}.

Thus, as in Case 1, all of the good orbits from classes (A), (B1) and
(C2) have index $\leq d+1$ and contribute to \eqref{eq:c_me}, and good
orbits from classes (D), (B2) and (C1) have index $> d+1$ and make no
contribution to \eqref{eq:c_me}.  Similarly to \eqref{eq:c^e} and
\eqref{eq:c^o}, define
\begin{equation*}
\label{eq:c^e-even}
c^e_\pm = \sum_{i=1}^r \#\{1 \leq \l \leq \lo \mid 
\cz(\ga_i^\l) < -1,\ \ga_i^{k_i\pm \l}
\text{ is good and }\cz(\ga_i^\l)\text{ is even}\}
\end{equation*}
and
\begin{equation*}
\label{eq:c^o-even}
c^o_\pm = \sum_{i=1}^r \#\{1 \leq \l \leq \lo \mid 
\cz(\ga_i^\l) < -1,\ \ga_i^{k_i\pm \l}
\text{ is good and }\cz(\ga_i^\l)\text{ is odd}\}.
\end{equation*}
Consider again $ \sum_{m=m_{\min}}^\infty (-1)^m c_m $ to which all
good orbits from classes (A)--(D) and the collection
$\{\gamma_i^{k_i}\}$ contribute. In particular, contributions of
classes (B2) and (C2) are, respectively, $c^e_- - c^o_-$ and
$c_+^e-c_+^o$. With the above discussion in mind, viewing
\eqref{eq:c_me} as
$ \sum_{m=m_{\min}}^\infty (-1)^m c_m - \sum_{m>d+1} (-1)^m c_m$, we
obtain
\begin{equation*}
\label{eq:contributions1-even}
\sum_{m=m_{\min}}^{d+1} (-1)^m c_m 
= \sum_{i=1}^r\bigg(\sum_{\l=1}^{k_i} \chi(\ga_i^\l) 
+ c^e_+-c^o_+ - (c^e_--c^o_-) 
- \sum_{\cz(\ga_i^{k_i})>d+1} \chi(\ga_i^{k_i})\bigg).
\end{equation*}
We again have $c^e_- = c^e_+$ and $ c^o_- = c^o_+$. Therefore,
\begin{equation}
\label{eq:contributions2-even}
\sum_{m=m_{\min}}^{d+1} (-1)^m c_m 
= \sum_{i=1}^r\sum_{\l=1}^{k_i} \chi(\ga_i^\l) 
- \sum_{i=1}^r\sum_{\cz(\ga_i^{k_i})>d+1} \chi(\ga_i^{k_i}).
\end{equation}
Similarly to \eqref{eq:r^e} and \eqref{eq:r^o}, define
\begin{equation}
\label{eq:r^e-even}
r^e_\pm = \#\{1 \leq i \leq r \mid 
\pm(\cz(\ga_i^{k_i}) - d) > 1,\ \ga_i^{k_i}
\text{ is good and }\cz(\ga_i^{k_i})\text{ is even}\}
\end{equation}
and
\begin{equation*}
\label{eq:r^o-even}
r^o_\pm = \#\{1 \leq i \leq r \mid 
\pm(\cz(\ga_i^{k_i}) - d) > 1,\ \ga_i^{k_i}
\text{ is good and }\cz(\ga_i^{k_i})\text{ is odd}\}.
\end{equation*}
Notice that the last term in \eqref{eq:contributions2-even} is given
by $r^e_+ - r^o_+$.  Then, with the assumption $n$ is even in mind,
setting $d=2sc_B$ and using Lemma \ref{lemma:resonance}, we turn
\eqref{eq:contributions2-even} into
\begin{align*}
\label{eq:morse_ineq-even}
s\chi(B) - r^e_+ + r^o_+ 
& = \sum_{m=m_{\min}}^{d+1} (-1)^m c_m \\
& \leq \sum_{m=m_{\min}}^{d+1} (-1)^m b_m \\
& = s\chi(B) - \sum_{i=0}^{n-2} (-1)^i \dim H_i(B;\Q).
\end{align*}
The inequality is due to the Morse inequalities \eqref{eq:Morse_ineq}
with the direction reversed since $d+1$ is odd, and the last equality
follows from \eqref{eq:CH} using the hypothesis that $c_B>n/2$.  Hence
\begin{equation*}
\label{eq:lower estimate r^e_+}
r^e_+ \geq \sum_{i=0}^{n-2} (-1)^i \dim H_i(B;\Q).
\end{equation*}
Arguing similarly to the case where $n$ is odd, it is not hard to see
that we also have 
\begin{equation*}
\label{eq:lower estimate r^e_-}
r^e_- \geq \sum_{i=0}^{n-2} (-1)^i \dim H_i(B;\Q).
\end{equation*}
Since $r^e_+$ and $r^e_-$ correspond to two disjoint collections of
simple orbits, these two inequalities imply that we must have at least
$j$ distinct contractible simple closed orbits, say,
$\ga_1,\dots,\ga_{j}$, where
\begin{align}
\label{eq:lower estimate q}
j & := 2 \sum_{i=0}^{n-2} (-1)^i \dim H_i(B;\Q) \nonumber \\
  & = \chi(B) + 2 \dim H_{n-1}(B;\Q) - \dim H_n(B;\Q).
\end{align}
Here the equality is due to Poincar\'e duality and the assumption that
$n$ is even.  Observe that iterates of these orbits do not contribute
to $\HC^0_\ast(M)$ in degrees $\ast = d, d \pm 1$. Indeed, since
$\cz(\ga_i^\l) \notin \{-1,0,1\}$ for every $1 \leq i \leq r$ and
$\l \in \N$ such that $\ga_i^\l$ is good, we infer from
\eqref{eq:IRT1}, \eqref{eq:IRT2} and the definition of $\lo$ that
\[
\cz(\ga_i^\l) \notin \{d-1,d,d+1\}
\]
for all $\l \neq k_i$ and $1 \leq i \leq r$ such that $\ga_i^\l$ is
good. Thus only the orbits $\ga_1^{k_1},\dots,\ga_r^{k_r}$ can
contribute to $\oplus_{m=d-1}^{d+1} \HC^0_{m}(M)$. However, it follows
from the definition of $r^e_\pm$ given by \eqref{eq:r^e-even} that
$\cz(\ga_i^{k_i}) \notin \{d-1,d,d+1\}$ for every $1 \leq i \leq
j$. Hence
\begin{align}
\label{ineq:r-even}
r & \geq j + \sum_{m=d-1}^{d+1} \dim \HC^0_{m}(M).
\end{align}
By \eqref{eq:CH} and Poincar\'e duality, we also have
\begin{align}
\label{ineq:hc^0-even}
\sum_{m=d-1}^{d+1} \dim \HC^0_{m}(M) 
& \geq \sum_{m=n-1}^{n+1} \dim H_m(B;\Q) \nonumber\\
& = 2\dim H_{n-1}(B;\Q) + \dim H_n(B;\Q).
\end{align}
Finally, combining \eqref{eq:lower estimate q}, \eqref{ineq:r-even}
and \eqref{ineq:hc^0-even}, we obtain
\begin{align*}
  r \geq \chi(B) + 4\dim H_{n-1}(B;\Q), 
\end{align*}
which establishes Theorem \ref{thm:main} when $n$ is even.

\begin{remark}
\label{rmk:non-hyp2}
By item \ref{cond:i} of Theorem \ref{thm:IRT}, the above argument
shows that if $n$ is even and $\alpha$ has finitely many distinct
contractible simple closed orbits, then at least
$\chi(B) + 4\dim H_{n-1}(B) - \dim H_{n}(B)$ of them are
non-hyperbolic. This proves Theorem \ref{thm:non-hyp} when $n$ is
even.
\end{remark}

\subsection{Proof of Theorem \ref{thm:app}}
\label{sec:proof cor2}
By Proposition \ref{prop:CH}, $B$ is necessarily positive or negative
spherically monotone.  We will prove the theorem in the positive
monotone case; the argument in the negative monotone case is similar.
Arguing by contradiction, assume that $\alpha$ has only one
contractible simple closed orbit $\ga$. Note that the assumption
$c_B>n/2$ implies that $2c_B\geq n+1$ if $n$ is odd and $2c_B\geq n+2$
if $n$ is even. Therefore, by the isomorphism \eqref{eq:CH},
$\HC^0_m(M)=0$ for every $m<1$ if $n$ is odd or $m<2$ if $n$ is
even. Moreover, there exists a sequence $m_i\to\infty$ such that
$\HC_{m_i}(M)\neq 0$ for every $i$. As consequence, $\mi(\ga)>0$ and
every good iterate of $\ga$ must have index $\geq 1$ if $n$ is odd or
$\geq 2$ if $n$ is even. This implies that $\alpha$ is index-positive
and has no good contractible closed orbits $\ga^k$ such that
$\cz(\ga^k)=0$ if $n$ is odd or $\cz(\ga^k) \in \{0,\pm 1\}$ if $n$ is
even. Thus $M$ and $\alpha$ satisfy condition \ref{cond:F} and so
Theorem \ref{thm:main} applies. This contradicts the assumption that
$\alpha$ has only one contractible simple closed orbit.

\section{Multiplicity results and the contact Conley conjecture via
  contact homology}
\label{sec:sft}

In this section we discuss a generalization of Theorem \ref{thm:main}
relying on a variant of hybrid cylindrical-linearized contact
homology. As another application of this homological construction we
state a refinement of the contact Conley conjecture proved in
\cite{GGM}.

\subsection{Contact homology} 
\label{sec:ch}
Let $(M^{2n+1},\xi)$ be a contact manifold and let $\alpha$ be a
non-degenerate contact form supporting $\xi$. For the sake of
simplicity, we assume that $c_1(\xi)=0$.  The differential graded
algebra $(\AC(M,\alpha),d_\alpha)$ underlying the full rational
contact homology is a graded commutative algebra generated by good
closed Reeb orbits of $\alpha$; see \cite{Bo:survey, EGH}.  With our
dimension conventions, the grading is given by
$|\gamma|=\mu(\gamma)+n-2$. Assume furthermore that $(M,\xi)$ admits a
non-degenerate index-admissible contact form $\beta$. Without loss of
generality, we may assume that $\beta<\alpha$, i.e., $\beta=f\alpha$
where $0<f<1$. Hence we have a cylindrical cobordism from $(M,\beta)$
to $(M,\alpha)$ in the symplectization of $M$, resulting in a
homomorphism
$\Phi_\beta\colon(\AC(M,\alpha),d_\alpha)\to (\AC(M,\beta),d_\beta)$
of differential graded algebras.

Since $\beta$ is index-admissible, $(\AC(M,\beta),d_\beta)$ has a
unique ``trivial'' augmentation $\eps_0$ determined by the requirement
that the only monomial of degree zero for which $\eps_0\neq 0$ is
$1$. Composing $\eps_0$ with $\Phi_\beta$, we obtain the augmentation
$$
\eps_\beta =\eps_0\circ \Phi_\beta\colon
(\AC(M,\alpha),d_\alpha)\to\Q.
$$
Note that, as is easy to see, $\eps_\beta(\gamma)=0$ whenever $\gamma$
is not contractible.

A routine argument shows that the linearized homology of
$(\AC(M,\beta),d_\beta)$ with respect to $\eps_\beta$ is independent
of $\beta$ and $\alpha$; cf.\ \cite{Bo:survey, EGH}. This is the
``hybrid'' homology we will use in this section but, for the sake of
simplicity, we will still refer to this homology as the cylindrical
contact homology of $\alpha$. The main advantage of this construction
over the standard cylindrical contact homology is that the homology is
defined for all non-degenerate contact forms: the form $\alpha$ need
not be index-admissible. The only requirement is that $(M,\xi)$ admits
one index-admissible form. It is essential that this homology can
still be viewed as the homology of a complex freely generated by good
closed Reeb orbits of $\alpha$. The complex is graded by $|\gamma|$ or
$\mu(\gamma)$ and filtered by the contact action. Furthermore -- and
this is essential for what follows -- the complex is also graded by
the free homotopy class of $\gamma$ just as the standard cylindrical
contact homology complex. (This is a consequence of the fact that
$\eps_\beta$ vanishes on non-contractible orbits.)

\begin{remark}
  The foundational aspects of the contact homology theory are still to
  be fully laid down. We refer the reader to \cite{HWZ10, HWZ11,
    HWZ14} for the polyfold approach to this theory and to
  \cite{Par:GT, Par} for the virtual cycle approach and further
  references.
\end{remark}

\subsection{Multiplicity results}
\label{sec:ch-m}
Using contact homology we have the following refinement of Theorem
\ref{thm:main}.

\begin{theorem}
\label{thm:main-ch}
Let $(M^{2n+1},\xi)$ be a prequantization $S^1$-bundle of a closed
symplectic manifold $(B,\om)$ such that $\om|_{\pi_2(B)}\neq 0$ and
$c_1(\xi)=0$. Then $(B,\omega)$ is necessarily monotone and we require
that $c_B>n/2$ when it is positive monotone and $c_B\geq n$ when it is
negative monotone. Assume, furthermore, that $H_{k}(B;\Q)=0$ for every
odd $k$ or $c_B>n$. Let $\alpha$ be an index-definite non-degenerate
contact form on $(M,\xi)$ having no contractible good periodic orbits
$\ga$ with $\cz(\ga)=0$ if $n$ is odd or with
$\cz(\ga) \in \{0,\pm 1\}$ if $n$ is even. Then $\alpha$ carries at
least $r_B$ geometrically distinct contractible periodic orbits, where
\begin{equation*}
r_B:=
\begin{cases}
\chi(B) + 2\dim H_n(B;\Q)
\text{ if } n\text{ is odd} \\
\chi(B) + 4\dim H_{n-1}(B;\Q)
\text{ if } n\text{ is even.}
\end{cases}
\end{equation*}
\end{theorem}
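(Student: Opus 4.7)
The plan is to follow the proof of Theorem \ref{thm:main} essentially verbatim, with the hybrid cylindrical--linearized contact homology $\HC_*(M)$ of Section \ref{sec:ch} playing the role of positive equivariant symplectic homology. The argument factors into three parts, of which only the first requires genuinely new input.

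First I would verify that $(M,\xi)$ admits a non-degenerate index-admissible contact form, which is the hypothesis needed to define $\HC_*(M)$ as in Section \ref{sec:ch}. The natural candidate is a sufficiently $C^2$-small non-degenerate perturbation $\beta$ of the connection contact form $\alpha_0$. In the positive monotone case, Lemma \ref{lemma:pos_index} shows that every contractible closed Reeb orbit of $\beta$ satisfies $\mu(\gamma) \geq 2c_B - n$. Combined with $\omega|_{\pi_2(B)} \neq 0$ (which forces $c_B \geq 2$ when $n=1$, i.e.\ $B=S^2$), the hypothesis $c_B > n/2$ yields $\mu(\gamma) > 3-n$, so $\mu(\gamma)$ avoids the forbidden set $\{2-n,\,1-n,\,3-n\}$. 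In the negative monotone case, the obvious sign-reversed analog of Lemma \ref{lemma:pos_index} gives $\mu(\gamma) \leq -(2c_B-n)$, and the strengthened hypothesis $c_B \geq n$ is exactly what is needed to ensure $\mu(\gamma) < 1-n$ and hence again avoid the forbidden set.

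Second I would compute the contractible part $\HC^0_*(M)$ via a Morse--Bott spectral sequence for $\alpha_0$, paralleling the argument for Proposition \ref{prop:CH}. The Reeb orbits of $\alpha_0$ form principal $S^1$-bundles $P_m \to B$, and each $P_m$ contributes a shifted copy of $H_*(B;\Q)$ to the $E^1$-page. The hypothesis that $H_k(B;\Q)=0$ for odd $k$ or $c_B > n$ forces collapse at $E^1$, yielding
\begin{equation*}
\HC^0_*(M) \cong \bigoplus_{m\in\N} H_{*-2mc_B+n}(B;\Q)
\end{equation*}
in the positive monotone case, with the negative-monotone analog from \eqref{eq:CH-nm} otherwise. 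The key structural features of the cylindrical contact homology complex --- generated by good closed Reeb orbits, graded by the Conley--Zehnder index, filtered by action, and split by free homotopy class as explained in Section \ref{sec:ch} --- are the same features of the equivariant symplectic homology complex exploited in Section \ref{sec:pf-main}.

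Third, with $\HC^0_*(M)$ computed, the combinatorial argument of Section \ref{sec:pf-main} applies word-for-word: Theorem \ref{thm:IRT} produces the vectors $(d^\pm,\vk^\pm)$, Lemma \ref{lemma:resonance} identifies the truncated mean Euler characteristic, and the classification of iterates into classes (A), (B1), (B2), (C1), (C2), (D) combines with the Morse inequalities to give $r \geq r_B$. The assumption on $\alpha$ concerning forbidden Conley--Zehnder indices enters at precisely the same juncture as before. The main obstacle is the first step in the negative monotone case; this is exactly why the hypothesis on $c_B$ must be strengthened from $c_B > n/2$ to $c_B \geq n$ there, since one needs the index estimate for a perturbation of the connection form to push contractible orbits out of the forbidden range $\{2-n,\,1-n,\,3-n\}$. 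The foundational details of contact homology are taken for granted as in Section \ref{sec:ch}.
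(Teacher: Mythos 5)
Your proposal matches the paper's own argument essentially step by step: establish index-admissibility of a perturbed connection form via Lemma \ref{lemma:pos_index} (positive monotone case, where $c_B\geq 2$ suffices to push indices above $3-n$) and its sign-reversed analog (negative monotone case, where $c_B\geq n$ is exactly what pushes indices below $1-n$), then compute $\HC^0_*(M)$ by the Morse--Bott spectral sequence, and finally rerun the combinatorics of Section \ref{sec:pf-main} unchanged. No gaps; this is the paper's proof.
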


\begin{proof}[A few words about the proof] The requirement that
  $c_1(\xi)=0$ guarantees via the Gysin exact sequence that
  $(B,\omega)$ is monotone, i.e., $c_1(TB)=\lambda [\om]$ in
  $H^2(B;\Q)$ for some $\lambda\in\R$; cf.\ Remark \ref{rmk:monotone}.
  As in the proof of Proposition \ref{prop:CH}, we first need to show
  that the contact homology of $(M,\xi)$ is defined and given by
  \eqref{eq:CH} or \eqref{eq:CH-nm} depending on whether $B$ is
  positive ($\lambda\geq 0$) or negative ($\lambda<0$) monotone.  To
  this end, it is sufficient to show that a non-degenerate
  perturbation $\beta$ of a connection contact form $\alpha_0$ is
  index-admissible.

  When $B$ is positive monotone, this is an immediate consequence of
  Lemma \ref{lemma:pos_index} together with the requirement that
  $c_B>n/2$.  (In fact, it is enough to assume that $c_B\geq 2$ which
  follows from $c_B>n/2$ when $n\geq 2$ and holds automatically under
  the conditions of the theorem when $n=1$ since $c_{S^2}=2$.) Then
  \eqref{eq:CH} follows exactly as in the proof of Proposition
  \ref{prop:CH} using now the Morse--Bott calculations of contact
  homology from \cite{Bo:thesis}.

  When $B$ is negative monotone, the situation is similar. In the
  notation from the proof of Lemma \ref{lemma:pos_index}, for every
  $T_0>0$, all closed Reeb orbits $\gamma$ of a sufficiently small
  perturbation $\beta$ of $\alpha_0$ with $T(\gamma)\leq T_0$ have
  $\mu(\gamma)\leq n-2c_B$. Then, an argument completely similar to
  the proof of the lemma and using the quasimorphism property of the
  mean index shows that the same is true for all orbits when $\beta$
  is sufficiently closed to $\alpha_0$. Thus $\beta$ is
  index-admissible when $n-2c_B<1-n$, or equivalently $c_B\geq n$, and
  \eqref{eq:CH-nm} follows again from the results in \cite{Bo:thesis}
  and the action filtration spectral sequence.

  The proof is then finished exactly in the same way as the proof of
  Theorem \ref{thm:main}; see Section~\ref{sec:pf-main}. (In the SFT
  framework developed in \cite{Par}, the gap in this argument is a
  Morse--Bott calculation of contact homology similar to
  \cite{Bo:thesis} or \cite{Poz}.)
\end{proof}

There is a broad class of symplectic manifolds to which Theorem
\ref{thm:main-ch} applies, while Theorem \ref{thm:main} does
not. Among these are, for instance, negative monotone symplectic
manifolds with large $c_B$, e.g., complete intersections of high
degree. These manifolds can have $H_{\mathit{odd}}(B;\Q)\neq 0$. A
simple example is the product of a complete intersection of a
sufficiently high degree and a symplectically aspherical manifold.

\begin{remark}
  It is clear from the discussion above that there is also a
  refinement of Theorem \ref{thm:non-hyp} relying on contact homology.
  Namely, under the assumptions of Theorem \ref{thm:main-ch}, if the
  contact form $\alpha$ has finitely many geometrically distinct
  contractible closed orbits then it carries at least
  $r^\text{non-hyp}_B$ geometrically distinct contractible
  non-hyperbolic periodic orbits.
\end{remark}

\subsection{Contact Conley conjecture}
\label{sec:ccc}
Another application of our definition of the cylindrical contact
homology is a refinement of the contact Conley conjecture originally
proved in \cite{GGM}. Namely, we have

\begin{theorem}[Contact Conley Conjecture] 
\label{thm:ccc}
Let $M\to B$ be a prequantization bundle and let $\alpha$ be a contact
form on $M$ supporting the standard (co-oriented) contact structure
$\xi$ on $M$.  Assume that
\begin{itemize}
\item[\rm{(i)}] $B$ is aspherical, i.e., $\pi_r(B)=0$ for all
  $r\geq 2$, and
\item[\rm{(ii)}] $c_1(\xi)\in H^2(M;\Q)$ is atoroidal.
\end{itemize}
Then the Reeb flow of $\alpha$ has infinitely many simple closed
orbits with contractible projections to $B$. Assume furthermore that
the Reeb flow has finitely many closed Reeb orbits in the free
homotopy class $\ff$ of the fiber and that these orbits are weakly
non-degenerate. Then for every sufficiently large prime $k$ the Reeb
flow of $\alpha$ has a simple closed orbit in the class $\ff^k$.
\end{theorem}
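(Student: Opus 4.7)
The plan is to follow the strategy of the contact Conley conjecture from \cite{GGM}, but reformulated using the hybrid cylindrical contact homology $\HC^\ff_*(M,\alpha)$ developed in Section \ref{sec:ch}, which is available in the setting of the theorem and provides free-homotopy-graded information.

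First I would set up the framework. Under hypothesis (i), $\pi_2(M) = 0$ and the long exact sequence of the $S^1$-bundle $M\to B$ identifies the free homotopy classes in $M$ that project to contractible loops with the powers $\ff^k$, $k\in\Z$. Under hypothesis (ii), the Conley--Zehnder and mean indices of a closed Reeb orbit in class $\ff^k$ are well defined once a reference loop in that class is chosen: atoroidality of $c_1(\xi)$ ensures independence of the connecting cylinder, and $\pi_2(B)=0$ gives integer grading. Next I would verify that $(M,\xi)$ admits an index-admissible non-degenerate contact form, so that the hybrid construction of Section \ref{sec:ch} applies. A small non-degenerate perturbation $\beta_0$ of a connection contact form $\alpha_0$ works: fiber orbits in class $\ff^k$ have mean index roughly $2kc_B>0$, and a calculation analogous to Lemma \ref{lemma:pos_index} places all contractible Conley--Zehnder indices outside the forbidden window. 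Thus $\HC^{\ff^k}_*(M,\alpha)$ is defined for every non-degenerate $\alpha$ and every $k\geq 1$.

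Second, I would carry out a Morse--Bott computation, paralleling the one in Proposition \ref{prop:CH} and \cite{Bo:thesis}, in each class $\ff^k$ separately. Since the Reeb flow of $\alpha_0$ is the $S^1$-action, orbits of period $k$ in class $\ff^k$ form a single Morse--Bott family diffeomorphic to $M$, fibered over $B$. The standard calculation yields
\[
\HC^{\ff^k}_*(M,\alpha) \;\cong\; H_{*-s_k}(B;\Q),
\]
for an integer shift $s_k$ growing linearly in $k$. This is non-zero in at least one degree for every $k\geq 1$, which already (together with a resonance argument for the accumulation of generators across classes) recovers the existence of infinitely many simple contractible-projection Reeb orbits as in \cite{GGM}.

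Third, for the refinement, fix a large prime $k$ and assume for contradiction that there is no simple orbit in $\ff^k$. Since $k$ is prime, any closed orbit in $\ff^k$ must be a $k$-th iterate $\gamma_i^k$ of a simple orbit $\gamma_i$ in class $\ff$ (the only other option, simple in $\ff^k$, is excluded). By hypothesis, there are only finitely many $\gamma_1,\dotsc,\gamma_m$, all weakly non-degenerate. Weak non-degeneracy gives a uniform upper bound on $\dim \HC^\text{loc}_*(\gamma_i^k)$, and the local homology of $\gamma_i^k$ is concentrated in a window of bounded width around $k\cdot \hmu(\gamma_i)$. Combining these with the non-vanishing output of Step 2 across degrees $s_k+j$ for $0\leq j\leq \dim B$, one compares the slope $\hmu(\gamma_i)$ of the iterate indices with the Morse--Bott slope encoded in $s_k$: for large $k$ the $\gamma_i^k$ either sit in the wrong degrees or, even when some $\hmu(\gamma_i)$ happens to match, they collectively contribute bounded rank that cannot cover $H_*(B;\Q)$ spread over the window. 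This yields the sought contradiction.

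The main obstacle is the resonant case in Step 3, when some $\hmu(\gamma_i)$ coincides with the Morse--Bott slope so the iterate indices line up exactly with the degrees supporting $\HC^{\ff^k}_*$. This is handled by a prime-divisibility argument in the spirit of the Hamiltonian Conley conjecture \cite{GG:CC}: one exploits that $\gamma_i^k$ for $k$ prime has controlled local homology coming from the structure of the linearized return map, and that the rank bound is strictly smaller than the Morse--Bott contribution for infinitely many primes. The remaining verifications---foundational issues aside---are routine adaptations of the arguments in \cite{GGM} to the hybrid setting of Section \ref{sec:ch}.
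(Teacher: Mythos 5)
Your core strategy matches the paper's: replace the index-admissibility hypothesis on $\alpha$ by the existence of one index-admissible non-degenerate form on $(M,\xi)$, apply the hybrid contact homology of Section \ref{sec:ch}, and then run the argument of \cite{GGM}. The paper in fact disposes of the proof in exactly this way, referring to \cite[Thm.\ 2.1]{GGM} for the main body of the argument and adding only the observation that a small non-degenerate perturbation of a connection contact form is index-admissible (citing \cite[Prop.\ 3.1]{GG:MW}).

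There is, however, a concrete technical slip in your index-admissibility step. You write that fiber orbits in $\ff^k$ have mean index roughly $2kc_B$ and invoke ``a calculation analogous to Lemma \ref{lemma:pos_index}.'' But in the setting of the theorem $B$ is aspherical, so $\pi_2(B)=0$ and $c_B$ is not defined (the infimum in its definition runs over the empty set); likewise $\omega|_{\pi_2(B)}=0$, violating the hypotheses of Lemma \ref{lemma:pos_index}. The correct mechanism, which the paper cites, is \cite[Prop.\ 3.1]{GG:MW}: one still gets $|\hmu(\gamma)|\geq O(T(\gamma))$ for contractible orbits $\gamma$ of the perturbation, via the quasimorphism property of the mean index, but without any reference to $c_B$ or spherical monotonicity. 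In the aspherical case the fibers are non-contractible in $M$, so there are simply no contractible closed orbits of the connection form $\alpha_0$ at all; linear growth of $|\hmu|$ follows from this together with the continuity of the mean index under $C^2$-small perturbations.

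Beyond that, your Steps 2--3 are a reconstruction of the \cite{GGM} argument that the paper itself does not carry out (it cites \cite{GGM} verbatim), and the reconstruction is loose in exactly the place where the Conley conjecture is hard. In particular, non-vanishing of $\HC^{\ff^k}_*(M)$ for each $k$ does not by itself yield infinitely many simple orbits, since the iterates of a single orbit in class $\ff$ populate every class $\ff^k$; the ``resonance argument for accumulation'' you invoke is precisely the substance of \cite{GGM}, not a routine addendum. This is not a fatal objection to your approach since the paper also defers to \cite{GGM} here, but you should not present the accumulation step as if it were immediate from the Morse--Bott computation alone.
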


The new point here, as compared to \cite[Thm.\ 2.1]{GGM}, is that the
form $\alpha$ is not required to be index-admissible. The proof of
Theorem \ref{thm:ccc} is essentially identical to the proof of
\cite[Thm.\ 2.1]{GGM} and the only difference is that with our
definition of cylindrical contact homology we need to ensure the
existence of just one non-degenerate index-admissible contact form. In
fact, every sufficiently small non-degenerate perturbation $\beta$ of
a connection contact form $\alpha_0$ on $M\to B$ is index
admissible. This is an immediate consequence of, e.g., \cite[Prop.\
3.1]{GG:MW} asserting that for every contractible closed Reeb orbit
$\gamma$ of $\beta$, we have
$$
\big|\hmu(\gamma)\big|\geq O\big(T(\gamma)\big),
$$
where $T(\gamma)$ is the period of $\gamma$. (The proof of this fact
is somewhat similar to the proof of Lemma \ref{lemma:pos_index}. As
in the proof of Theorem \ref{thm:main-ch}, in the contact homology
framework from \cite{Par} the missing part in this argument is a
Morse--Bott calculation of contact homology.)


\begin{thebibliography}{aaa}

\bibitem{AM:dynconvex} M. Abreu, L. Macarini, {\em Dynamical convexity
    and elliptic periodic orbits for Reeb flows.} Preprint
  arXiv:1411.2543, 2014. To appear in Math. Ann.

\bibitem{AM:multiplicity} M. Abreu, L. Macarini, {\em Multiplicity of
    periodic orbits for dynamically convex contact forms.} Journal of
  Fixed Point Theory and Applications (2016),
  doi:10.1007/s11784-016-0348-2.

\bibitem{BG} J. Barge, E. Ghys, \emph{Cocycles d'Euler et de Maslov.}
  Math.\ Ann.\ \textbf{294} (1992), 235--265.

\bibitem{Bes} A. Besse, \emph{Manifolds all of whose Geodesics are
    Closed.} Ergebnisse der Mathematik und ihrer Grenzgebiete,
  93. Springer-Verlag, Berlin-New York, 1978.

\bibitem{Bo:thesis} F. Bourgeois, \emph{A Morse--Bott Approach to
    Contact Homology.} Ph.D. dissertation, Stanford University,
  Stanford, Calif., 2002.

\bibitem{Bo:survey} F. Bourgeois, \emph{A survey of contact homology.}
  CRM Proceedings and Lecture Notes, \textbf{49} (2009),
  45--60.

\bibitem{BO:09} F. Bourgeois, A. Oancea, \emph{Symplectic homology,
    autonomous Hamiltonians, and Morse-Bott moduli spaces.} Duke
  Math.\ J. \textbf{146} (2009), 71--174. 

\bibitem{BO:12} F. Bourgeois, A. Oancea, \emph{$S^1$-equivariant
    symplectic homology and linearized contact homology.}  Preprint
  arXiv:1212.3731, 2012. To appear in Int.\ Math.\ Res.\ Not.\ IMRN.

\bibitem{BO:Gysin} F. Bourgeois, A. Oancea, \emph{The Gysin exact
    sequence for $S^1$-equivariant symplectic homology.}  J. Topol.\
  Anal.\ \textbf{5} (2013), 361--407. 

\bibitem{Ca} J.W.S. Cassels, An Introduction to Diophantine
  Approximation, Cambridge Tracts in Mathematics and Mathematical
  Physics, No.\ 45. Cambridge University Press, New York, 1957.

\bibitem{CGH} D. Cristofaro-Gardiner, M. Hutchings, \emph{From one
    Reeb orbit to two}. J. Diff.\ Geom.\ \textbf{102} (2016), 25--36.

\bibitem{DLW1} H. Duan, Y. Long, W. Wang, {\em Two closed geodesics on
    compact simply-connected bumpy Finsler manifolds.} J. Diff.\
  Geom.\  \textbf{104} (2016), 275--289. 

\bibitem{DLW2} H. Duan, Y. Long, W. Wang, {\em The enhanced common
    index jump theorem for symplectic paths and non-hyperbolic closed
    geodesics on Finsler manifolds.} Calc.\ Var.\ Partial Differential
  Equations \textbf{55} (2016), 55:145. 

\bibitem{DLLW} H. Duan, H. Liu, Y. Long, W. Wang, {\em Non-hyperbolic
    closed characteristics on non-degenerate star-shaped hypersurfaces
    in $\R^{2n}$.} Preprint arXiv:1510.08648, 2015.

\bibitem{EGH} Y.~Eliashberg, A.~Givental and H.~Hofer, {\em
    Introduction to symplectic field theory.} Geom.\ Funct.\ Anal.,
  Special volume, Part II (2000), 560--673.

\bibitem{Es} J. Espina, \emph{On the mean Euler characteristic of
    contact structures.}  Internat.\ J. Math.\ \textbf{25} (2014),
  doi: 10.1142/S0129167X14500463.

\bibitem{GGo} V. Ginzburg, Y. Goren, {\em Iterated index and the mean
    Euler characteristic.} J. Topol.\ Anal.\ {\bf 7} (2015), no.\ 3,
  453--481.

\bibitem{GG:MW} V.L. Ginzburg, B.Z. G\"urel, \emph{Periodic orbits of
  twisted geodesic flows and the Weinstein--Moser theorem.}
  Comment.\ Math.\ Helv., \textbf{84} (2009), 865--907.

\bibitem{GG:hyp} V.L. Ginzburg, B.Z. G\"urel, \emph{Hyperbolic fixed
    points and periodic orbits of Hamiltonian diffeomorphisms.} Duke
  Math.\ J. \textbf{163} (2014), 565--590.

\bibitem{GG:CC} V.L. Ginzburg, B.Z. G\"urel,
\emph{The Conley conjecture and beyond.} Arnold Math.\ J. \textbf{1}
(2015), 299--337. 

\bibitem{GG:convex} V.L. Ginzburg, B.Z. G\"urel, {\em
    Lusternik--Schnirelmann theory and closed Reeb orbits.} Preprint
  arXiv:1601.03092, 2016.

\bibitem{GGM} V.L. Ginzburg, B.Z. G\"urel, L. Macarini, \emph{On the
    Conley conjecture for Reeb flows.} Internat.\ J. Math.\
  \textbf{26} (2015), 1550047 (22 pages), doi:
  10.1142/S0129167X15500470.

\bibitem{GHHM} V.L. Ginzburg, D. Hein, U.L. Hryniewicz, L. Macarini,
  \emph{Closed Reeb orbits on the sphere and symplectically degenerate
    maxima.}  Acta Math.\ Vietnam.\ \textbf{38} (2013), 55--78

\bibitem{GK} V.~Ginzburg, E.~Kerman, {\em Homological resonances
    for Hamiltonian diffeomorphisms and Reeb flows.} Int.\ Math.\
  Res.\ Notices {\bf 2010} (2010), 53--68.

\bibitem{Gu} B.Z. G\"urel, \emph{Perfect Reeb flows and action-index
    relations.} Geom.\ Dedicata \textbf{174} (2015),
  105--120.  

\bibitem{GuKa} J. Gutt, J. Kang, \emph{On the minimal number of
    periodic orbits on some hypersurfaces in $\mathbb{R}^{2n}$.}
  Annales de l'institut Fourier, {\bf 66} (2016), 2485--2505.

\bibitem{HWZ:convex} H. Hofer, K. Wysocki, E. Zehnder, \emph{The
    dynamics on three-dimensional strictly convex energy surfaces.}
  Ann.\ of Math.\ (2), \textbf{148} (1998), 197--289.


\bibitem{HWZ10} H. Hofer, K. Wysocki, E. Zehnder, {\em SC-smoothness,
    retractions and new models for smooth spaces.} Discrete Contin.\
  Dyn.\ Syst.\ \textbf{28} (2010), 665--788.

\bibitem{HWZ11} H. Hofer, K. Wysocki, E. Zehnder, {\em Applications of
    polyfold theory I: The Polyfolds of Gromov--Witten Theory.}
  Preprint arXiv:1107.2097, 2011.

\bibitem{HWZ14} H. Hofer, K. Wysocki, E. Zehnder, {\em Polyfold and
    Fredholm Theory I: Basic Theory in M-Polyfolds.} Preprint
  arXiv:1407.3185, 2014.

\bibitem{HM} U. Hryniewicz, L. Macarini, {\em Local contact homology
    and applications.} J. Topol.\ Anal.\ {\bf 7} (2015),
  167--238. 

\bibitem{Ka} J. Kang, \emph{Equivariant symplectic homology and
    multiple closed Reeb orbits.} Internat.\ J. Math.\ \textbf{24},
  (2013), 1350096.

\bibitem{LL} H. Liu, Y. Long, \emph{The existence of two closed
    characteristics on every compact star-shaped hypersurface in
    $\R^4$}.  Acta Math. Sin. (Engl. Ser.) {\bf 32}
  (2016),40--53. 

\bibitem{Lon02} Y. Long, {Index Theory for Symplectic Paths with
    Applications.} Birkh\"auser, Basel, 2002.

\bibitem{LZ} Y. Long, C. Zhu, {\em Closed characteristics on compact
    convex hypersurfaces in $\R^{2n}$.} Ann.\ of Math.\ {\bf 155}
  (2002), 317--368. 

\bibitem{Par:GT} J. Pardon, \emph{An algebraic approach to virtual
    fundamental cycles on moduli spaces of pseudo-holomorphic curves.}
  Geom.\ Topol.\ \textbf{20} (2016), 779--1034.  

\bibitem{Par} J. Pardon, {\em Contact homology and virtual fundamental
    cycles.} Preprint arXiv:1508.03873, 2015.

\bibitem{Poz} M. Po\'zniak, \emph{Floer homology, Novikov rings and
    clean intersections.} In \emph{Northern California Symplectic
    Geometry Seminar}, 119--181, Amer.\ Math.\ Soc.\ Transl.\ Ser.\ 2,
  \textbf{196}, AMS, Providence, RI, 1999.

\bibitem{SZ} D. Salamon, E. Zehnder, {\em Morse theory for periodic
    solutions of Hamiltonian systems and the Maslov index.} Comm.\
  Pure Appl.\ Math.\ {\bf 45} (1992), no.\ 10, 1303--1360.

\bibitem{Vi} C. Viterbo, \emph{Functors and computations in Floer
    cohomology, I.} Geom.\ Funct.\ Anal.\ \textbf{9} (1999), 985--1033.
 
\bibitem{Wa13} W. Wang, \emph{Closed characteristics on compact convex
    hypersurfaces in $\R^8$.} Adv.\ Math.\ \textbf{297} (2016),
  93--148.

\bibitem{Wa} W. Wang, \emph{Existence of closed characteristics on
    compact convex hypersurfaces in $\R^{2n}$.}  Calc.\ Var.\ Partial
  Differential Equations, \textbf{55} (2016), 1--25.

\bibitem{Us} I. Ustilovsky, \emph{Contact Homology and Contact
    Structures on $S^{4m+1}$.}  Ph.D. dissertation, Stanford
  University, Stanford, Calif., 1999.

\bibitem{Zil} W. Ziller, {\em Geometry of the Katok examples.} Ergodic
  Theory Dynam.\ Syst.\ {\bf 3} (1983), no.\ 1, 135--157.

\end{thebibliography}
\end{document}